\newtheorem{theorem}{Theorem}[section]
\newtheorem{lemma}[theorem]{Lemma}
\newtheorem{proposition}[theorem]{Proposition}
\newtheorem{corollary}[theorem]{Corollary}
\theoremstyle{definition}
\newtheorem{example}[theorem]{Example}
\theoremstyle{remark}
\newtheorem{remark}{Remark}
\newcommand{\eps}{\varepsilon}
\newcommand{\bR}{{\mathbb R}}
\newcommand{\bx}{\mathbf{x}}
\numberwithin{equation}{section}
\begin{document}
\title[Improved SAV for original energy stability]{Improved scalar auxiliary variable schemes for original energy stability of gradient flows}

\author[R. Chen]{Rui Chen}
\address{\hspace*{-12pt}R.~Chen: School of Science \& Key Laboratory of Mathematics and Information Networks (Ministry of Education), Beijing University of Posts and Telecommunications, Beijing, 100876, China}
\email{ruichen@bupt.edu.cn}


\author[T. Wang]{Tingfeng Wang}
\address{\hspace*{-12pt}T.~Wang: School of Mathematics and Statistics, Wuhan University, Wuhan, 430072, China}
\email{tingfengwang@whu.edu.cn}

\author[X. Zhao]{Xiaofei Zhao}
\address{\hspace*{-12pt}X.~Zhao: School of Mathematics and Statistics \& Computational Sciences Hubei Key Laboratory, Wuhan University, Wuhan, 430072, China}
\email{matzhxf@whu.edu.cn}




\maketitle

\begin{abstract}\noindent
Scalar auxiliary variable (SAV) methods are a class of linear schemes for solving gradient flows that are known for the stability of a `modified' energy. In this paper, 
we propose an improved SAV (iSAV) scheme that not only retains the complete linearity  but also ensures rigorously the stability of the original energy. The convergence and optimal error bound are rigorously established for the iSAV scheme and discussions are made for its high-order extension. Extensive numerical experiments are done to validate the convergence, robustness and energy stability of iSAV, and some comparisons are made. 

\noindent\textbf{Keywords.}{\bf } gradient flows, improved scalar auxiliary variable scheme, original energy stability, Allen–Cahn and Cahn–Hilliard equations, convergence  

\noindent\textbf{AMS(2010) subject classifications.} 65M12, 35K20, 35K35, 35K55, 65Z05
\end{abstract}

\section{Introduction}

The gradient flow that consists of a driving free energy and a dissipative mechanism, widely occurs in mathematical modelling of natural science and engineering, e.g., the interface dynamics, crystallization, thin films, liquid crystals et al. \cite{Anderson1998Diffuse, Gurtin1996TwoPhase, Lowengrub1998QuasiIncompressible, Elder2002Modeling, Fraaije1993dynamic, Fraaije2003Model, Leslie1979Theory, Larson1990Arrested}. 
In this paper, we consider a free energy functional  $\mathcal{E}[\phi]$ defined for functions on a bounded  spatial domain $\Omega\subset\bR^d$ ($d=1,2,3$) and the  general  gradient flow of the following form:  
\begin{align}
	\frac{\partial\phi}{\partial t} = -\mathcal{G}\mu,\quad \mu = \delta \mathcal{E}/\delta \phi, \label{gradient flow 0}
\end{align}
where $\mathcal{G}$ is a nonnegative symmetric linear operator that determines the dissipative mechanism. The common dissipative mechanisms include the $L^{2}$ gradient flow (i.e., $\mathcal{G} = I$) and the $H^{-\alpha}$ gradient flow (i.e., $\mathcal{G} = (-\Delta)^{\alpha}$ with $0<\alpha\leq 1$).  We supplement 
\eqref{gradient flow 0} with boundary conditions satisfying the integration-by-parts, resulting in the elimination of all boundary terms for simplicity. Typically, periodic boundary conditions, homogeneous Neumann and Dirichlet boundary conditions are all applicable.  
With $\langle \phi,\psi \rangle := \int_{\Omega}{ \phi\psi }\,{\rm d}{\bf x}$ denoting the $L^{2}$ inner product, the  energy dissipation law of (\ref{gradient flow 0}) reads
\begin{align*}
	\frac{{\rm d} \mathcal{E}[\phi]}{{\rm d} t} = \left\langle \frac{\delta \mathcal{E}}{\delta \phi},\frac{\partial\phi}{\partial t} \right\rangle = -\langle \mu,\mathcal{G}\mu \rangle\leq 0.
\end{align*}

For accurately and efficiently solving (\ref{gradient flow 0}),  extensive numerical methods have been proposed. The classical approaches are the fully implicit schemes, e.g., the backward Euler, Crank-Nicolson and the convex splitting  \cite{Shen2010Numerical,Chen2019SecondOrder, Wise2010Unconditionally, Shen2012SecondOrder, Elliott1993GlobalDynamics, Eyre1998Unconditionally}. They can provide the decrease of the energy at discrete time level, but  nonlinear solvers are needed at each time step,  which is often costly. Concerning strong stability issues of explicit schemes, the semi-implicit schemes  which need only linear solvers at each time step, have become popular, particularly when some tailored stabilization techniques   \cite{Shen2010Numerical, Shen2015Efficient, Shen2015Decoupled, Chen2015Decoupled, Shen2014Decoupled, Li2017SecondOrder} can be proposed to overcome the stability constraint from explicitly treating  the nonlinearity. 
Another major  class of methods are the exponential time difference methods \cite{Du21review, Ju2018EnergyStability, Wang2016EfficientStable}, which can be designed to meet the energy stability and/or the maximum bound principle. For more complete and detailed  reviews, we refer to \cite{Du21review,Du2020phase,Shen2019NewClass} and the references therein.


Among the semi-implicit methods, the modern approach is to introduce some auxiliary variable to the problem which induces the state-of-art: invariant energy quadratization (IEQ) methods \cite{Chen2017SecondOrder, Yang2016LinearFirst, Yang2017EfficientLinear, Yang2017Linear, Yang2017Numerical, Yang2017NumericalThreeComponent} and the scalar auxiliary variable (SAV) methods \cite{Cheng2020LagrangeMultiplier,Shen2019NewClass, Huang2020HighlyEfficient,Shen2018ConvergenceSAV}. Both IEQ and SAV  provide framework that 
enables to construct linear and  high-order schemes, and  their main advantage is to provide the unconditional stability of an approximated/modified energy. 
Compared to IEQ, SAV is more explicit in the sense that the matrix for inversion 
at each time step is a constant, whereas that of IEQ is varying in the time iteration. We shall focus on the SAV
method in this work. 

Ever since the born  of the original SAV \cite{Shen2019NewClass}, the improvements on it have been started. For instance, \cite{Huang2020HighlyEfficient, Cheng2021GeneralizedSAV, Yang2020Roadmap, Liu2020ExponentialSAV, Huang2022new, Takhirov2024quad} considered the elimination of restrictions on the lower bound of the free energy, or the positivity-preservation of the  auxiliary variable.  Among all possible directions of improvements, the main target is for the stability of the  original energy, because the modified energy dissipation law in SAV (and IEQ) cannot guarantee  the  dissipation of the original model in theory and counterexamples can be observed in practical computing. To this aim,  effective stabilization terms have been designed and added to SAV methods \cite{zhang20,Zhang2020non,chen2019fast} (also stabilized-IEQ \cite{xu19}) to enhance the practical performance. \cite{Jiang2022ImprovingSAV, Liu2023RelaxedSAV} proposed relaxation techniques to improve the correlation between the modified energy and the original energy. Cheng et al. \cite{Cheng2020LagrangeMultiplier} proposed to use  the  Lagrange multiplier for the original energy-stability,   while the price is to solve a nonlinear algebraic equation per time level.

In this paper, we are going to propose an improved SAV (iSAV) scheme for solving (\ref{gradient flow 0}) particularly aiming for the original energy stability. It is linear with only the constant-coefficient matrix for inversion, and more importantly, it provides the dissipation of the original energy at discrete level as (see  Theorem \ref{Energy Law of iSAV-BE}): 
$$ \frac{1}{\tau}\left( \mathcal{E}[\phi^{n+1}]-\mathcal{E}[\phi^{n}] \right)\leq -\left\langle 
\mu^{n+1},\mathcal{G}\mu^{n+1} \right\rangle,\quad n\geq0.$$
The key ideas consist of: 1) replacing the numerical value of the scalar variable $r^n$ in the backward Euler discretization by the original functional $r[\phi^n]$ of the scalar variable; 2) introducing  a stabilization term. 
They are inspired from the work for the IEQ method \cite{Chen2022NovelLinearSchemes} but which is restricted to the double-well potential case. The proposed iSAV method here  works for general potential functions bounded from below, and we shall rigorously  establish its first-order error bound (see Theorem \ref{Theo eroor estimate}). A possible second-order extension will also be discussed. The theoretical energy-stability and accuracy will be supported by numerical experiments. Comparisons with the original SAV will be made to illustrate the improvement.

The rest of this paper is organized as follows. In Section \ref{sec:method}, we present the iSAV schemes and establish the  original energy stability result. In Section \ref{sec:analysis}, we provide an error estimate for the proposed scheme. The numerical results and comparisons are given in Section \ref{sec:result}, and some conclusions are drawn in Section \ref{sec:conclusion}. 
The following are some notations that will be frequently used in the paper. The standard $L^p=L^p(\Omega)$ space and the Sobolev space $H^s=H^s(\Omega)$ with $s>0$ are utilized. 
$L^p(0,T; V)$ denotes the $L^p$ space on the time interval $(0, T)$ with values in the functional space $V$. 
The norm of the space $V$ is denoted by $\|\cdot\|_{V}$ and specifically, the $L^2$ norm is denoted without a subscript. 
For a nonnegative symmetric linear operator, such as $\mathcal{G}$, we denote $\left\langle \phi, \mathcal{G}\phi \right\rangle^{\frac{1}{2}}=\|\mathcal{G}^{\frac{1}{2}}\phi\|$ for short. The notation $a=\mathcal{O}(b)$ or $a\lesssim b$ means that there exists a  constant $C>0$, whose value may vary from line to line, but is independent of the numerical discrete parameters (e.g., the step size $\tau$ or the time level $n$), such that $\left| a \right|\leq Cb$.


\section{Improved SAV scheme}\label{sec:method}
In this section, we shall present the main numerical schemes for solving the gradient flows. To be precise, let us clearly state our setup of the problem. We  focus on the following form of free energy:
\begin{align*}
    \mathcal{E}[\phi] = \frac{1}{2}\left\| \mathcal{L}^{\frac{1}{2}}\phi \right\|^2 + \int_{\Omega}{ F(\phi) }\,{\rm d}{\bf x}, \quad \Omega\subset\bR^d,\ d=1,2,3,
\end{align*}
and its gradient flow (\ref{gradient flow 0}) then reads as
\begin{subequations}\label{gradient flow}\begin{align}
    &\frac{\partial\phi}{\partial t} = -\mathcal{G}\mu, \quad t>0,\  \bx\in\Omega,\label{gradient flow 1} \\
    &\mu = \mathcal{L}\phi + f(\phi), \label{gradient flow 2}
\end{align}\end{subequations}
where $\mathcal{L}$ is a symmetric nonnegative linear operator that is independent of $\phi=\phi(\bx,t)$, 
 and $F(\phi)$ denotes the bulk part of the free energy density which is usually nonlinear 
(e.g., a double well form $F(\phi)=\frac{1}{4\varepsilon^2}(\phi^2-1)^2$) with $f(\phi):=F'(\phi)$. From the application point of view \cite{Shen2019NewClass},  $\int_{\Omega}{ F(\phi) }\,{\rm d}{\bf x}$ is often bounded from below. So without loss of generality, we assume that $$\int_{\Omega}{ F(\phi) }\,{\rm d}{\bf x} \geq C_0 > 0,$$ otherwise we can add a constant to $F(\phi)$ without changing the gradient flow (\ref{gradient flow}). The boundary of the bounded domain $\Omega$ is set to be  periodic for (\ref{gradient flow}) for simplicity. 

\subsection{The SAV framework} We begin with a   brief review of the SAV method introduced in \cite{Shen2019NewClass}. 
The first key step for the SAV approach is to introduce a scalar variable:  
$$ \mathcal{F}[\phi] := \int_{\Omega} F(\phi) \, d\mathbf{x},\quad r(t):=r[\phi] :=\sqrt{\mathcal{F}[\phi]}, $$ 
and then (\ref{gradient flow}) can be rewritten as a system:
\begin{subequations}\label{SAV Gradient flow}\begin{align}
    &\frac{\partial \phi}{\partial t} = -\mathcal{G}\mu, \quad t>0,\  \bx\in\Omega,\label{SAV Gradient flow 1}\\
    &\,\mu = \mathcal{L}\phi+\frac{r}{\sqrt{\mathcal{F}[\phi]}}f(\phi), \label{SAV Gradient flow 2} \\
    &\frac{{\rm d} r}{{\rm d} t} = \frac{1}{2\sqrt{\mathcal{F}[\phi]}}\int_{\Omega} f(\phi)\frac{\partial \phi}{\partial t}  \,{\rm d}{\bf x},\quad t>0. \label{SAV Gradient flow 3}
\end{align}\end{subequations}
Taking the $L^2$ inner products of (\ref{SAV Gradient flow 1}) and (\ref{SAV Gradient flow 2}) with $\mu$ and $\frac{\partial \phi}{\partial t}$, respectively,  multiplying (\ref{SAV Gradient flow 3}) by $2r$, and then by adding them together, the energy dissipative law of the gradient flow (\ref{gradient flow}) can be seen
\begin{align}\label{rate ex}
    \frac{{\rm d}\mathcal{E}[\phi]}{{\rm d}t}=\frac{{\rm d}}{{\rm d}t}\left(  \frac{1}{2}\left\| \mathcal{L}^{\frac{1}{2}}\phi \right\|^2 + r^{2} \right) = -\| \mathcal{G}^{\frac{1}{2}}\mu \|^2,\quad t>0.
\end{align}

Numerical discretizations can then be considered for (\ref{SAV Gradient flow}). 
By the backward Euler type of finite difference discretization with explicit treatment of  the nonlinear terms in  (\ref{SAV Gradient flow}), a first-order SAV scheme (SAV-BE) was constructed in \cite{Shen2019NewClass}: for $n\geq0$,
\begin{subequations}\label{SAV-BE scheme}\begin{align}
		&\frac{1}{\tau}(\phi^{n+1}-\phi^{n}) = -\mathcal{G}\mu^{n+1}, \label{SAV-BE scheme 1}\\
		&\mu^{n+1}= \mathcal{L}\phi^{n+1}+\frac{r^{n+1}}{\sqrt{\mathcal{F}[\phi^{n}]}}f(\phi^{n}), \label{SAV-BE scheme 2} \\
		&r^{n+1}-r^{n}=\frac{1}{2\sqrt{\mathcal{F}[\phi^{n}]}}\int_{\Omega}f(\phi^{n})(\phi^{n+1}-\phi^{n})\,{\rm d}{\bf x}  \label{SAV-BE scheme 3}.
\end{align}\end{subequations}
The first advantage of the scheme is its linearity, where $\phi^{n+1}$ can be obtained from  (\ref{SAV-BE scheme})  by solving two linear systems. We refer the readers to \cite{Shen2019NewClass} for the detailed update procedure. Moreover, by 
taking the $L^2$ inner products of (\ref{SAV-BE scheme 1}) and (\ref{SAV-BE scheme 2})  with $\mu^{n+1}$ and $\frac{1}{\tau}(\phi^{n+1}-\phi^{n})$, respectively,   multiplying (\ref{SAV-BE scheme 3}) by $2\frac{r^{n+1}}{\tau}$, and then adding them together, one can obtain a discrete energy dissipative law for SAV-BE (\ref{SAV-BE scheme}) as
\begin{align}
    \frac{1}{\tau}\left( E^{n+1} - E^{n} \right) \leq -\left\| \mathcal{G}^{\frac{1}{2}}\mu^{n+1} \right\|^2, \quad E^{n} := \frac{1}{2}\left\| \mathcal{L}^{\frac{1}{2}}\phi^{n} \right\|^2 + \left( r^{n} \right)^2,\quad n\geq0,\label{modified energy of SAV-BE}
\end{align}
which gives the `energy' stability. However, $r^{n}\neq r[\phi^{n}]$ for $n\geq 1$, and this may lead to oscillations in the evolution of the original energy $\mathcal{E}[\phi^{n}]$ of the gradient flow and the occurrence of numerical instability. 

In fact, for the behaviour of the original energy in the SAV-BE scheme (\ref{SAV-BE scheme}), we can give the following formal estimate (derivation given  in Appendix \ref{Appendix 0}): 
\begin{align}
    \frac{1}{\tau}\left( \mathcal{E}[\phi^{n+1}] - \mathcal{E}[\phi^{n}] \right) \leq -\| \mathcal{G}^{\frac{1}{2}}\mu^{n+1} \|^2 + \mathcal{O}(\tau), \quad n\geq0,\label{Energy Law of SAV-BE}
\end{align}
which indicates that the original energy stability needs the time step to be small enough.  This can get worse  
when the  nonlinearity is stiff, e.g., $F(\phi)=\frac{1}{\varepsilon^2}(\phi^2-1)^2$ for some parameter $0<\eps<1$, where (\ref{Energy Law of SAV-BE}) will at least deteriorate to:
\begin{align*}
    \frac{1}{\tau}\left( \mathcal{E}[\phi^{n+1}] - \mathcal{E}[\phi^{n}] \right) \leq -\| \mathcal{G}^{\frac{1}{2}}\mu^{n+1} \|^2 + \mathcal{O}(\tau/\varepsilon^2),\quad n\geq0.
\end{align*}
So if $\varepsilon\ll1$ while $\tau$ is not sufficiently small, the original energy of the SAV schemes could easily be unstable and the oscillations could get wilder as $\eps$ reduces. Higher-order SAV schemes \cite{Shen2019NewClass,Shen2018ConvergenceSAV} exhibit such  problem as well.  Suitable  stabilization techniques need to be considered for SAV \cite{Shen2019efficient} for the better practical performance.  

\subsection{Improved SAV scheme} Concerning the pity of SAV schemes, we aim to suggest a patch for it. 
We begin with the work on the first-order scheme. Reviewing the SAV-BE scheme,  $r^{n}$ is used to update $r^{n+1}$ in  (\ref{SAV-BE scheme 3}), which indeed leads to a growing disparity between the values of $r^{n}$ and $r[\phi^{n}]$. Therefore, we consider to replace $r^{n}$ by $r[\phi^{n}]$ in (\ref{SAV-BE scheme 3}) and add a stabilization term in (\ref{SAV-BE scheme 2}). This results in the following scheme that we name as the first-order \emph{improved SAV-BE scheme (iSAV-BE)}: for $n\geq0$,
\begin{subequations}\label{iSAV-BE scheme}\begin{align}
    &\frac{1}{\tau}(\phi^{n+1}-\phi^{n}) = -\mathcal{G}\mu^{n+1},\label{iSAV-BE scheme 1}\\
    &\mu^{n+1} = \mathcal{L}\phi^{n+1}+\frac{\tilde{r}^{n+1}}{\sqrt{\mathcal{F}[\phi^{n}]}}f(\phi^n)+S(\phi^{n+1}-\phi^{n}),\label{iSAV-BE scheme 2} \\
    &\tilde{r}^{n+1}-r[\phi^{n}] = \frac{1}{2\sqrt{\mathcal{F}[\phi^{n}]}}\int_{\Omega}{ f(\phi^{n})(\phi^{n+1}-\phi^{n}) }\,{\rm d}{\bf x}, \label{iSAV-BE scheme 3}
\end{align}\end{subequations}
where $S\geq 0$ is a stabilizing parameter to be determined. 

For the proposed iSAV-BE (\ref{iSAV-BE scheme}), firstly, it is still a linear scheme and $\phi^{n+1}$ can be  obtained efficiently as follows. Note that the two auxiliary variables $\mu^{n+1}$ and $\tilde{r}^{n+1}$ can be eliminated by combining (\ref{iSAV-BE scheme 1})$\sim$(\ref{iSAV-BE scheme 3}), and we can find 
\begin{align*}
    \frac{1}{\tau}\left( \phi^{n+1}-\phi^{n} \right) =&  -\mathcal{G}\mathcal{L}\phi^{n+1} - \frac{ \mathcal{G}f(\phi^{n}) }{ \sqrt{\mathcal{F}[\phi^{n}]} }\left( r[\phi^{n}] + \frac{1}{2}\left\langle \frac{ f(\phi^{n}) }{ \sqrt{\mathcal{F}[\phi^{n}]} },\phi^{n+1}-\phi^{n} \right\rangle \right) - S\mathcal{G}(\phi^{n+1}-\phi^{n}).
\end{align*}
Denoting $b^{n} := f(\phi^{n})/\sqrt{\mathcal{F}[\phi^{n}]}$, the above equation can be rewritten as
\begin{align}
    &\left[ I+\tau\mathcal{G}\left( \mathcal{L} + S \right) \right]\phi^{n+1} + \frac{\tau}{2}\left\langle b^{n}, \phi^{n+1} \right\rangle\mathcal{G}b^{n} \nonumber\\
     =& \left( I+\tau S\mathcal{G} \right)\phi^{n} - \tau\left( r[\phi^{n}] - \frac{1}{2}\left\langle b^{n}, \phi^{n} \right\rangle \right)\mathcal{G}b^{n} =: g^{n}. \label{iSAV linear solve 1}
\end{align}
Since $\mathcal{G}$ and $\mathcal{L}$ are nonnegative operators, the operator $I+\tau\mathcal{G}(\mathcal{L} + S)$ is invertible and we denote $$\mathcal{A}^{-1} := \left[ I+\tau\mathcal{G}(\mathcal{L} + S) \right]^{-1}.$$ Then, by multiplying (\ref{iSAV linear solve 1}) on both sides with $\mathcal{A}^{-1}$ and taking the $L^2$ inner product with $b^{n}$, we can obtain
\begin{align}\label{isav-be solver eq1}
    \langle b^{n},\phi^{n+1}\rangle + \frac{\tau}{2}\langle b^{n},\phi^{n+1}\rangle\langle b^{n},\mathcal{A}^{-1}\mathcal{G} b^{n}\rangle = \langle b^{n},\mathcal{A}^{-1}g^{n}\rangle,\quad n\geq0.
\end{align}
Based on the above two equations, we can compute $\phi^{n+1}$ (semi-discretized numerical solution) from (\ref{iSAV-BE scheme}) through the  following steps: for $n\geq0$,
\begin{enumerate}[label=(\roman*),leftmargin=*]
    \item compute $\langle b^{n},\mathcal{A}^{-1}\mathcal{G} b^{n}\rangle$ and $\langle b^{n},\mathcal{A}^{-1}g^{n}\rangle$, where $\mathcal{A}^{-1}\mathcal{G} b^{n}$ and $\mathcal{A}^{-1}g^{n}$ can be calculated by solving two linear systems with constant coefficients;
    \item compute $\langle b^{n},\phi^{n+1}\rangle$ from (\ref{isav-be solver eq1}) as 
    \begin{align*}
        \langle b^{n},\phi^{n+1}\rangle = \langle b^{n},\mathcal{A}^{-1}g^{n}\rangle/\left( 1 + \frac{\tau}{2}\langle b^{n},\mathcal{A}^{-1}\mathcal{G} b^{n}\rangle \right),
    \end{align*}
    where $1+\frac{\tau}{2}\langle b^{n},\mathcal{A}^{-1}\mathcal{G} b^{n}\rangle\geq 1$  clearly;
    \item obtain $\phi^{n+1}$ from (\ref{iSAV linear solve 1}) as
    $$ \phi^{n+1} = -\frac{\tau}{2}\langle b^{n},\phi^{n+1}\rangle\mathcal{A}^{-1}\mathcal{G} b^{n} + \mathcal{A}^{-1}g^{n}. $$
\end{enumerate}

The invertibility of $\mathcal{A}$ also indicates the uniqueness of the numerical solution from the iSAV-BE scheme (\ref{iSAV-BE scheme}). We shall rigorously prove in the next section that it is a first order accurate method in time.  For the practical implementation, the spatial discretizations can be done by the Fourier pseudo-spectral method \cite{Shen2011} under the periodic setup, and the involved linear systems in above procedure (i)\&(iii) are then automatically diagonalized by the fast Fourier transform. Thus, the whole scheme can work efficiently in a matrix-free way.  
For other spatial discretizations in general,  
since $\mathcal{A}^{-1}$ is constant in the temporal iteration, it can be pre-computed and stored once for all. 

Next, we will give the main improvement from the iSAV-BE scheme that is the  stability of the original energy of the gradient flow. 

\begin{theorem}[Original energy stability]\label{Energy Law of iSAV-BE}Suppose (\ref{gradient flow}) is solved by the iSAV-BE scheme (\ref{iSAV-BE scheme}) till some $0<T<\infty$, and we choose
   the stabilizing parameter $S$  to satisfy
    \begin{equation}\label{S1 condition}
        S \geq S_0:=\frac{1}{2}\max_{0\leq n \leq T/\tau - 1}\;\max_{\mathbf{x}\in\Omega,\,\rho
        \in(0,1)}\left\{ f'\left( \left( 1-\rho \right)\phi^{n} + \rho\phi^{n+1} \right) \right\}.
    \end{equation}
    Then, the iSAV-BE scheme (\ref{iSAV-BE scheme}) has the following discrete energy disspative law,
    \begin{align} 
        \frac{1}{\tau}\left( \mathcal{E}[\phi^{n+1}]-\mathcal{E}[\phi^{n}] \right)\leq -\| \mathcal{G}^\frac{1}{2}\mu^{n+1} \|^2,\quad 
    0\leq n\leq T/\tau-1. \label{Original Energy Law of iSAV-BE}
    \end{align}
    After Corollary \ref{cor h2 bound}, we will further show that $S_0$ can be a fixed finite number, and so an $S\in[S_0,\infty)$ can be chosen.  
\end{theorem}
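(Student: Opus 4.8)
The plan is to test the scheme against its natural multipliers and then rewrite the whole balance as a statement about the \emph{exact} energy $\mathcal{E}$. First I would pair (\ref{iSAV-BE scheme 1}) with $\mu^{n+1}$ and pair (\ref{iSAV-BE scheme 2}) with $\frac1\tau(\phi^{n+1}-\phi^{n})$, and subtract so that the two left-hand sides cancel, giving
\begin{align*}
-\|\mathcal{G}^{\frac12}\mu^{n+1}\|^2 = {}& \frac1\tau\langle\mathcal{L}\phi^{n+1},\phi^{n+1}-\phi^{n}\rangle + \frac{1}{\tau}\frac{\tilde r^{n+1}}{\sqrt{\mathcal{F}[\phi^{n}]}}\langle f(\phi^{n}),\phi^{n+1}-\phi^{n}\rangle \\
& + \frac{S}{\tau}\|\phi^{n+1}-\phi^{n}\|^2.
\end{align*}
For the elliptic term I would apply the symmetric polarization identity $\langle\mathcal{L}\phi^{n+1},\phi^{n+1}-\phi^{n}\rangle = \frac12\|\mathcal{L}^{\frac12}\phi^{n+1}\|^2 - \frac12\|\mathcal{L}^{\frac12}\phi^{n}\|^2 + \frac12\|\mathcal{L}^{\frac12}(\phi^{n+1}-\phi^{n})\|^2$, which already reproduces the quadratic part of $\mathcal{E}[\phi^{n+1}]-\mathcal{E}[\phi^{n}]$ together with a nonnegative surplus.

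The decisive step is the scalar-variable term, and this is exactly where the iSAV modification pays off. Inserting (\ref{iSAV-BE scheme 3}) turns $\langle f(\phi^{n}),\phi^{n+1}-\phi^{n}\rangle$ into $2\sqrt{\mathcal{F}[\phi^{n}]}\,(\tilde r^{n+1}-r[\phi^{n}])$, so the middle term becomes $2\tilde r^{n+1}(\tilde r^{n+1}-r[\phi^{n}])$; the identity $2a(a-b)=a^2-b^2+(a-b)^2$ then rewrites it as $(\tilde r^{n+1})^2-(r[\phi^{n}])^2+(\tilde r^{n+1}-r[\phi^{n}])^2$. Because iSAV feeds back the genuine functional value $r[\phi^{n}]=\sqrt{\mathcal{F}[\phi^{n}]}$ instead of a propagated numerical scalar, we have $(r[\phi^{n}])^2=\mathcal{F}[\phi^{n}]$ \emph{exactly}, so this contribution telescopes against the true nonlinear energy $\mathcal{F}$ rather than a surrogate. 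Collecting everything I expect to arrive at
\begin{align*}
\mathcal{E}[\phi^{n+1}]-\mathcal{E}[\phi^{n}] = {}& -\tau\|\mathcal{G}^{\frac12}\mu^{n+1}\|^2 - \tfrac12\|\mathcal{L}^{\frac12}(\phi^{n+1}-\phi^{n})\|^2 \\
& + \big(\mathcal{F}[\phi^{n+1}]-(\tilde r^{n+1})^2\big) - (\tilde r^{n+1}-r[\phi^{n}])^2 - S\|\phi^{n+1}-\phi^{n}\|^2.
\end{align*}

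It then remains to show the non-dissipative terms are absorbed by $S$. I would expand both pieces explicitly: a second-order Taylor expansion of $F$ gives $\mathcal{F}[\phi^{n+1}]-\mathcal{F}[\phi^{n}] = \langle f(\phi^{n}),\phi^{n+1}-\phi^{n}\rangle + \frac12\int_{\Omega}f'(\xi)(\phi^{n+1}-\phi^{n})^2\,{\rm d}\mathbf{x}$ with $\xi=(1-\rho)\phi^{n}+\rho\phi^{n+1}$, $\rho\in(0,1)$, while squaring (\ref{iSAV-BE scheme 3}) shows that the sum $\big(\mathcal{F}[\phi^{n+1}]-(\tilde r^{n+1})^2\big)-(\tilde r^{n+1}-r[\phi^{n}])^2$ equals that Taylor remainder minus the nonnegative quantity $\langle f(\phi^{n}),\phi^{n+1}-\phi^{n}\rangle^2/(2\mathcal{F}[\phi^{n}])$, which is well defined since $\mathcal{F}[\phi^{n}]\ge C_0>0$. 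Discarding this nonpositive contribution together with $-\tfrac12\|\mathcal{L}^{\frac12}(\phi^{n+1}-\phi^{n})\|^2\le0$, the claim reduces to the pointwise bound $\frac12\int_{\Omega}f'(\xi)(\phi^{n+1}-\phi^{n})^2\,{\rm d}\mathbf{x}\le S\|\phi^{n+1}-\phi^{n}\|^2$, which follows directly from the choice $S\ge S_0$ in (\ref{S1 condition}) since $(\phi^{n+1}-\phi^{n})^2\ge0$ pointwise; dividing by $\tau$ then yields (\ref{Original Energy Law of iSAV-BE}). I expect the only genuinely delicate point to be recognizing that one should compute the discrepancy $\mathcal{F}[\phi^{n+1}]-(\tilde r^{n+1})^2$ \emph{exactly} rather than try to bound $\tilde r^{n+1}$ as a mere first-order approximation of $r[\phi^{n+1}]$ — once the identity $(r[\phi^{n}])^2=\mathcal{F}[\phi^{n}]$ is exploited, the remainder of the argument is sign-checking; the separate question of whether $S_0$ is a finite number is, as the statement notes, deferred to after Corollary \ref{cor h2 bound}.
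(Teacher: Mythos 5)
Your proposal is correct and follows essentially the same route as the paper's proof: the same pairings of the scheme equations with $\mu^{n+1}$, $\frac{1}{\tau}(\phi^{n+1}-\phi^{n})$ and $\tilde r^{n+1}$, the same polarization and $2a(a-b)$ identities, the same Taylor expansion of $\mathcal{F}$ with Lagrange remainder, and the same final absorption of $\frac12\int_\Omega f'(\xi)(\phi^{n+1}-\phi^{n})^2\,{\rm d}\mathbf{x}$ by the stabilization term via (\ref{S1 condition}). The only difference is presentational — you assemble the exact-energy balance first and then expand the nonlinear discrepancy, whereas the paper passes through the intermediate quantity $\tilde E^{n+1}$ — and your exact identity for $\big(\mathcal{F}[\phi^{n+1}]-(\tilde r^{n+1})^2\big)-(\tilde r^{n+1}-r[\phi^{n}])^2$ matches the paper's $\mathcal{N}_2^{n}$ computation.
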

\begin{proof}
    By taking the $L^2$ inner products of (\ref{iSAV-BE scheme 1}) and (\ref{iSAV-BE scheme 2}) with $\mu^{n+1}$ and $(\phi^{n+1}-\phi^{n})/\tau$,  respectively, and multiplying (\ref{iSAV-BE scheme 3}) by $\frac{2\tilde{r}^{n+1}}{\tau}$, we can obtain the following equations:
    \begin{subequations}\label{oes eq}
    \begin{align}
        &\frac{1}{\tau}\left\langle \phi^{n+1}-\phi^{n},\mu^{n+1} \right\rangle = -\left\| \mathcal{G}^\frac{1}{2}\mu^{n+1} \right\|^2,
        \label{oes eq1}\\
        &\frac{1}{\tau}\left\langle \mu^{n+1},\phi^{n+1}-\phi^{n} \right\rangle = \frac{1}{\tau}\left\langle \mathcal{L}\phi^{n+1},\phi^{n+1}-\phi^{n} \right\rangle + \frac{\tilde{r}^{n+1}}{\tau}\left\langle \frac{ f(\phi^{n}) }{ \sqrt{\mathcal{F}[\phi^{n}]} }, \phi^{n+1}-\phi^{n} \right\rangle + \frac{S}{\tau}\left\| \phi^{n+1} - \phi^{n} \right\|^2 \nonumber \\
        &\hspace{3.14cm} = \frac{1}{2\tau}\left( \left\| \mathcal{L}^\frac{1}{2}\phi^{n+1} \right\|^2 - \left\| \mathcal{L}^\frac{1}{2}\phi^{n} \right\|^2 + \left\| \mathcal{L}^\frac{1}{2}(\phi^{n+1}-\phi^{n}) \right\|^2 \right) + \frac{S}{\tau}\left\| \phi^{n+1} - \phi^{n} \right\|^2  \nonumber\\
        &\hspace{3.7cm} + \frac{\tilde{r}^{n+1}}{\tau}\left\langle \frac{ f(\phi^{n}) }{ \sqrt{\mathcal{F}[\phi^{n}]} }, \phi^{n+1}-\phi^{n} \right\rangle, \label{oes eq2} \\
        &\frac{1}{\tau}\left[ \left( \tilde{r}^{n+1} \right)^2 - \left( r[\phi^{n}] \right)^2 + \left( \tilde{r}^{n+1} - r[\phi^{n}] \right)^2 \right]  = \frac{\tilde{r}^{n+1}}{\tau}\left\langle \frac{ f(\phi^{n}) }{ \sqrt{\mathcal{F}[\phi^{n}]} }, \phi^{n+1}-\phi^{n} \right\rangle.\label{oes eq3}
    \end{align}
    \end{subequations}
    Plugging (\ref{oes eq1}) into the left hand side of (\ref{oes eq2}) and plugging (\ref{oes eq3}) into the right hand side of (\ref{oes eq2}), 
    we get
    \begin{align*}
    &-\frac{1}{\tau}\left[ \left( \tilde{r}^{n+1} \right)^2 - \left( r[\phi^{n}] \right)^2 + \left( \tilde{r}^{n+1} - r[\phi^{n}] \right)^2 \right]\\
    =&    \left\| \mathcal{G}^\frac{1}{2}\mu^{n+1} \right\|^2
    +\frac{1}{2\tau}\left( \left\| \mathcal{L}^\frac{1}{2}\phi^{n+1} \right\|^2 - \left\| \mathcal{L}^\frac{1}{2}\phi^{n} \right\|^2 + \left\| \mathcal{L}^\frac{1}{2}(\phi^{n+1}-\phi^{n}) \right\|^2 \right) + \frac{S}{\tau}\left\| \phi^{n+1} - \phi^{n} \right\|^2. 
    \end{align*}
    Plugging (\ref{iSAV-BE scheme 3}) into the left hand side of the above, we can find
    \begin{align}
        \frac{1}{\tau}\left( \tilde{E}^{n+1} - \mathcal{E}[\phi^{n}] \right) = -\left\| \mathcal{G}^{\frac{1}{2}}\mu^{n+1} \right\|^2 - \frac{S}{\tau}\left\| \phi^{n+1} - \phi^{n} \right\|^2 - \mathcal{N}_{1}^{n}, \label{20240111-03}
    \end{align}
    where $\tilde{E}^{n+1}$ and $\mathcal{N}_{1}^{n}$ denote
    \begin{align*}
        &\tilde{E}^{n+1} = \frac{1}{2}\left\| \mathcal{L}^{\frac{1}{2}}\phi^{n+1} \right\|^2 + \left( \tilde{r}^{n+1} \right)^2, \\
        &\mathcal{N}_{1}^{n} = \frac{1}{2\tau}\left\| \mathcal{L}^{\frac{1}{2}}\left( \phi^{n+1} - \phi^{n} \right) \right\|^2 + \frac{1}{4\tau}\frac{1}{\mathcal{F}[\phi^{n}]}\left( \int_{\Omega}{ f(\phi^{n})\left( \phi^{n+1} - \phi^{n} \right) }\,{\rm d}{\bf x} \right)^2.
    \end{align*}
    
    Next, we proceed to derive the difference between $\tilde{E}^{n+1}$ and $\mathcal{E}[\phi^{n}]$. Note that by the Taylor expansion, 
    \begin{align*}
        &\mathcal{F}\left[\phi^{n} + \rho\left(\phi^{n+1} - \phi^{n}\right)\right] \nonumber\\
        =& \mathcal{F}\left[{\phi^{n}}\right] + \frac{{\rm d}}{{\rm d}\rho}\mathcal{F}\left[\phi^{n} + \rho\left(\phi^{n+1} - \phi^{n}\right)\right]\Big|_{\rho=0}\cdot\rho + \frac{{\rm d}^2}{{\rm d}\rho^2}\mathcal{F}\left[\phi^{n} + \rho\left(\phi^{n+1} - \phi^{n}\right)\right]\Big|_{\rho=\rho_{0}}\cdot\frac{\rho^2}{2} \nonumber\\
        =& \mathcal{F}\left[\phi^{n}\right] + \rho\int_{\Omega}{ f\left(\phi^{n}\right)\left(\phi^{n+1} - \phi^{n}\right) }\,{\rm d}{\bf x} + \frac{\rho^2}{2}\int_{\Omega}{ f'\left( (1-\rho_{0})\phi^{n} + \rho_{0}\phi^{n+1} \right)\left(\phi^{n+1} - \phi^{n}\right)^2 }\,{\rm d}{\bf x},
    \end{align*}
    for some $\rho_{0}\in(0,\rho)$, and so  with $\rho=1$ we have 
    \begin{align}
        \left( r[\phi^{n+1}] \right)^2 - \left( r[\phi^{n}] \right)^2 &= \mathcal{F}[\phi^{n+1}] - \mathcal{F}[\phi^{n}] \nonumber\\
        &= \int_{\Omega}{ f(\phi^{n})\left( \phi^{n+1} - \phi^{n} \right) }\,{\rm d}{\bf x} + \frac{1}{2}\int_{\Omega}{ f'\left( (1-\rho_{0})\phi^{n} + \rho_{0}\phi^{n+1} \right)(\phi^{n+1} - \phi^{n})^2 }\,{\rm d}{\bf x}. \label{20240111-01}
    \end{align}
    Additionally from (\ref{iSAV-BE scheme 3}): $\tilde{r}^{n+1} =r[\phi^{n}]+ \frac{1}{2\sqrt{\mathcal{F}[\phi^{n}]}}\int_{\Omega}{ f(\phi^{n})(\phi^{n+1}-\phi^{n}) }\,{\rm d}{\bf x}$, then by taking the square on both sides,  we obtain 
    \begin{align}
        &\left( \tilde{r}^{n+1} \right)^2 - \left( r[\phi^{n}] \right)^2 = \int_{\Omega}{ f(\phi^{n})\left( \phi^{n+1} - \phi^{n} \right) }\,{\rm d}{\bf x} + \frac{1}{4\mathcal{F}[\phi^{n}]}\left( \int_{\Omega}{ f(\phi^{n})\left( \phi^{n+1} - \phi^{n} \right) }\,{\rm d}{\bf x} \right)^2.  \label{20240111-02}
    \end{align}
    Subtracting (\ref{20240111-02}) from (\ref{20240111-01}), we have
    \begin{align}
        \left( r[\phi^{n+1}] \right)^2 - \left( \tilde{r}^{n+1} \right)^2 =& \frac{1}{2}\int_{\Omega}{ f'\left( (1-\rho_{0})\phi^{n} + \rho_{0}\phi^{n+1} \right)(\phi^{n+1} - \phi^{n})^2 }\,{\rm d}{\bf x} \nonumber\\
        & - \frac{1}{4\mathcal{F}[\phi^{n}]}\left( \int_{\Omega}{ f(\phi^{n})\left( \phi^{n+1} - \phi^{n} \right) }\,{\rm d}{\bf x} \right)^2. \label{20240111-05}
    \end{align}
    Then by (\ref{20240111-03}), 
    \begin{align*}
        \frac{1}{\tau}\left( \mathcal{E}[\phi^{n+1}] - \mathcal{E}[\phi^{n}] \right) =& \frac{1}{\tau}\left( \tilde{E}^{n+1} - \mathcal{E}[\phi^{n}] \right) + \frac{1}{\tau} \left( \mathcal{E}[\phi^{n+1}] - \tilde{E}^{n+1} \right) \nonumber\\
        =& -\left\| \mathcal{G}^{\frac{1}{2}}\mu^{n+1} \right\|^2 - \frac{S}{\tau}\left\| \phi^{n+1} - \phi^{n} \right\|^2 - \mathcal{N}_{1}^{n}+
    \frac{1}{\tau} \left( \mathcal{E}[\phi^{n+1}] - \frac{1}{2}\left\| \mathcal{L}^{\frac{1}{2}}\phi^{n+1} \right\|^2 - \left( \tilde{r}^{n+1} \right)^2\right),
    \end{align*}
    and by plugging (\ref{20240111-05}) into the above, we get
    \begin{align*}
        \frac{1}{\tau}\left( \mathcal{E}[\phi^{n+1}] - \mathcal{E}[\phi^{n}] \right) 
        =& \frac{1}{2\tau}\int_{\Omega}{ f'\left( (1-\rho_{0})\phi^{n} + \rho_{0}\phi^{n+1} \right)(\phi^{n+1} - \phi^{n})^2 }\,{\rm d}{\bf x}- \frac{S}{\tau}\left\| \phi^{n+1} - \phi^{n} \right\|^2\nonumber\\
    &-\left\| \mathcal{G}^{\frac{1}{2}}\mu^{n+1} \right\|^2 - \mathcal{N}_{2}^{n} ,
    \end{align*}
    with 
    $$ \mathcal{N}_{2}^{n} = \frac{1}{2\tau}\left\| \mathcal{L}^{\frac{1}{2}}\left( \phi^{n+1} - \phi^{n} \right) \right\|^2 + \frac{1}{2\tau}\frac{1}{\mathcal{F}[\phi^{n}]}\left( \int_{\Omega}{ f(\phi^{n})\left( \phi^{n+1} - \phi^{n} \right) }\,{\rm d}{\bf x} \right)^2\geq0. $$
    With the condition on the stabilizing parameter (\ref{S1 condition}), we can immediately conclude the original energy dissipative law (\ref{Original Energy Law of iSAV-BE}).
\end{proof}

\begin{remark}
    The stabilizing term $S(\phi^{n+1} - \phi^{n})$ ensures rigorous stability in the dissipation of original energy, and this term is of $\mathcal{O}(\tau)$.  Thus, the inclusion of it in the scheme does not affect the overall convergence order. 
\end{remark}

\begin{remark}\label{Remark r-r BE}
    Assuming that $\phi^{n},f(\phi^{n}),f'(\phi^{n})\;(n=0,1,\cdots)$ and $\phi_{t}$ are bounded, we can formally see that $r[\phi^{n+1}] - \tilde{r}^{n+1} = \mathcal{O}(\tau^2)$ from (\ref{20240111-05}) for iSAV-BE scheme. On the other hand, for the SAV-BE scheme (\ref{SAV-BE scheme}), we  have
    \begin{align*}
        r[\phi^{n+1}] - r^{n+1} = r[\phi^{n}] - r^{n} + \mathcal{O}(\tau^2),
    \end{align*} 
    which is derived in Appendix \ref{Appendix 0}. That means after the accumulation in the temporal iterations, the SAV-BE scheme in general offers only  $r[\phi^{n+1}] - r^{n+1} = \mathcal{O}(\tau)$. This will be shown numerically later.
\end{remark}

\begin{remark}\label{stable tech}
    In practice, the SAV schemes will need to use some  stabilization strategy. Taking the SAV-BE (\ref{SAV-BE scheme}) scheme as an example, a stabilization term is often extracted from $f(\phi^{n})$ and (\ref{SAV-BE scheme 2}) would be modified as
    \begin{align*}
        \mu^{n+1} = S\phi^{n+1} +  \mathcal{L}\phi^{n+1}+\frac{r^{n+1}}{\sqrt{\mathcal{F}[\phi^{n}]}}\left( f(\phi^n) - S\phi^{n} \right).
    \end{align*}
    Such kind of strategy works well in the literature \cite{Shen2019NewClass,Shen2019efficient} for numerical simulations,  while unfortunately it lacks of rigorous analysis and cannot have the theoretical guarantee for  the original energy stability. Moreover, 
    the size of the stabilization parameter for SAV  \cite{Shen2019efficient} is sometimes unknown in advance. For iSAV, such condition is to some extent more explicitly given in (\ref{S1 condition}).
\end{remark}

\subsection{Possible extension}
The SAV schemes can have high-order versions. For instance,   a second-order scheme can be obtained by using the backward differentiation formula (SAV-BDF) \cite{Shen2019NewClass}: for $n\geq1,$
\begin{subequations}\label{SAV-BDF scheme} \begin{align}
    &\frac{1}{2\tau}\left( 3\phi^{n+1} - 4\phi^{n} + \phi^{n-1} \right) = -\mathcal{G}\mu^{n+1}, \label{SAV-BDF scheme 1} \\
    &\mu^{n+1} = \mathcal{L}\phi^{n+1} + \frac{r^{n+1}}{\sqrt{\mathcal{F}[\phi_{*}^{n+1}]}}f(\phi_{*}^{n+1}), \label{SAV-BDF scheme 2} \\
    &3r^{n+1} - 4r^{n} + r^{n-1} = \frac{1}{2\sqrt{\mathcal{F}[\phi_{*}^{n+1}]}}\int_{\Omega}{ f(\phi_{*}^{n+1})\left( 3\phi^{n+1} - 4\phi^{n} + \phi^{n-1} \right) }\,{\rm d}{\bf x}.\label{SAV-BDF scheme 3}
\end{align}\end{subequations}
The extension of iSAV to high order is, however, not easy unfortunately and it is still under going. Here we discuss some possibility.   

Borrowing the modification used for iSAV-BE, we can introduce similar patch to the above SAV-BDF to write down the following \emph{iSAV-BDF scheme}:
\begin{subequations}\label{iSAV-BDF scheme} \begin{align}
    &\frac{1}{2\tau}\left( 3\phi^{n+1} - 4\phi^{n} + \phi^{n-1} \right) = -\mathcal{G}\mu^{n+1}, \label{iSAV-BDF scheme 1} \\
    &\mu^{n+1} = \mathcal{L}\phi^{n+1} + \frac{\tilde{r}^{n+1}}{\sqrt{\mathcal{F}[\phi_{*}^{n+1}]}}f(\phi_{*}^{n+1}) + S(\phi^{n+1} - 2\phi^{n} + \phi^{n-1}), \label{iSAV-BDF scheme 2} \\
    &3\tilde{r}^{n+1} - 4r[\phi^{n}] + r[\phi^{n-1}] = \frac{1}{2\sqrt{\mathcal{F}[\phi_{*}^{n+1}]}}\int_{\Omega}{ f(\phi_{*}^{n+1})\left( 3\phi^{n+1} - 4\phi^{n} + \phi^{n-1} \right) }\,{\rm d}{\bf x}, \label{iSAV-BDF scheme 3}
\end{align}\end{subequations}
where $\phi_{*}^{n+1}:=2\phi^{n}-\phi^{n-1}$ is the second-order extrapolation for $\phi^{n+1}$. The iSAV-BDF scheme is still linear for solving. 
Although by numerical experiments later, we shall show that the iSAV-BDF (\ref{iSAV-BDF scheme}) can remarkably improve the stability for the original energy compared to the SAV-BDF (\ref{SAV-BDF scheme}), 
 we are encountering difficulty to rigorously prove its  original energy stability. By assuming enough  smoothness and boundedness for $f,\phi$ and $\phi^{n}\;(n=0,1,\cdots)$, what we are able to show for 
 the iSAV-BDF scheme (\ref{iSAV-BDF scheme}) is
\begin{align}
    \frac{1}{\tau}\left( \mathcal{E}_{2}[\phi^{n+1},\phi^{n}] - \mathcal{E}_{2}[\phi^{n},\phi^{n-1}] \right) \leq -\left\| \mathcal{G}^{\frac{1}{2}}\mu^{n+1} \right\|^2 + \mathcal{O}(\tau^2), \label{Energy Law of iSAV-BDF}
\end{align}
where $\mathcal{E}_{2}$ is defined as
\begin{align*}
    \mathcal{E}_{2}[\phi^{n}, \phi^{n-1}] :=& \frac{1}{4}\left( \left\| \mathcal{L}^{\frac{1}{2}}\phi^{n} \right\|^2 + \left\| \mathcal{L}^{\frac{1}{2}}\left( 2\phi^{n}-\phi^{n-1} \right) \right\|^2 \right) + \frac{1}{2}\left[ \left( r[\phi^{n}] \right)^2 + \left( 2r[\phi^{n}]-r[\phi^{n-1}] \right)^2 \right] \\
    & + \frac{S}{2}\left\| \phi^{n} - \phi^{n-1} \right\|^2.
\end{align*}
We leave the derivation of (\ref{Energy Law of iSAV-BDF}) to Appendix \ref{Appendix 1}. 

\begin{remark}\label{Remark r-r BDF}
    Similarly as in Remark \ref{Remark r-r BE},  we can derive the difference between the auxiliary variables $r^{n}$ and  $r[\phi^n]$ in SAV-BDF and iSAV-BDF  as
    \begin{align*}
        &r[\phi^{n}] - \tilde{r}^{n} = \mathcal{O}(\tau^3), \quad   \text{for iSAV-BDF (\ref{iSAV-BDF scheme})}; \\
        &r[\phi^{n}] - r^{n} = \mathcal{O}(\tau^2), \quad  \text{for SAV-BDF (\ref{SAV-BDF scheme})}. 
    \end{align*}
    The detailed derivations are omitted here for brevity.
\end{remark}

\section{Convergence analysis}\label{sec:analysis}
In this section, we carry out  rigorous convergence analysis for the proposed iSAV-BE scheme (\ref{iSAV-BE scheme}).  The main convergence theorem is stated in Section \ref{subsec3 main}, and the rest of the section is devoted for the proof. In the analysis, a large number of constants $C$ will appear whose value may vary from line to line, and  we will use the notation $C(\cdot)$ to indicate its dependence when needed.

\subsection{Main convergence result}\label{subsec3 main}
We restrict the spatial dimension to be no more than three dimensions, i.e., $d\leq 3$, and consider only one typical  case of the gradient flow for analysis: the Cahn-Hilliard equation, i.e., $\mathcal{L}=-\Delta$ and $\mathcal{G}=-\Delta$ in \eqref{gradient flow}. Assume that $F\in C^{5}(\mathbb{R})$ and $f$ satisfies 
\begin{subequations}\label{f condition}\begin{align}
    &|f'(z)|\leq C\cdot\big( |z|^p + 1 \big), \quad\ p>0 \;\;\ {\rm arbitrary\;\;if} \;\; d = 1,2; \quad 0<p<4 \;\; {\rm if} \;\; d = 3, \label{f condition 1}\\
    &|f''(z)|\leq C\cdot\big( |z|^{p'} + 1 \big), \quad {p'}>0 \;\; {\rm arbitrary\;\;if} \;\; d = 1,2; \quad 0<{p'}<3 \;\; {\rm if} \;\; d = 3, \label{f condition 2}
\end{align}\end{subequations}
and the exact solution $\phi$ of \eqref{gradient flow} up to  $0\leq t\leq T$ with some $0<T<\infty$ satisfies \cite{Shen2018ConvergenceSAV}: 
\begin{align}
    \phi\in L^{\infty}(0,T;H^{2})\cap L^{\infty}(0,T;W^{1,\infty}), \quad \phi_{t}\in L^{\infty}(0,T;H^{-1})\cap L^{2}(0,T;H^{1}), \quad \phi_{tt}\in L^{2}(0,T;H^{-1}). \label{Assumption of exact solution 0}
\end{align}
The main convergence result of the iSAV-BE scheme (\ref{iSAV-BE scheme}) is stated as the following theorem. 
\begin{theorem}[Convergence and error bound]\label{Theo eroor estimate}
    Denote $e_{\phi}^{n} = \phi^{n} - \phi(t_{n})$, $e_{\mu}^{n} = \mu^{n} - \mu(t_{n})$, $\tilde{q}^{n+1} = \tilde{r}^{n+1} - r(t_{n+1})$ with $\phi^{n},\mu^{n},\tilde{r}^{n+1}$ obtained from iSAV-BE (\ref{iSAV-BE scheme}) for any $S\geq0$, and let both (\ref{f condition})  and (\ref{Assumption of exact solution 0}) hold. Assume that $\phi^{0}\in H^{8}(\Omega)$ and denote $\left\| \phi \right\|_{L^{\infty}(0,T;H^{1})}^2=M$,  then there exist some constants $\tau_{0},\,C_1,C_{2}>0$ independent of $n,\tau$ such that for any $0<\tau<\tau_{0}$, 
    \begin{align}\label{uniform bound}
        \left\| \phi^{n} \right\|_{H^{1}}^2 \leq M+1, \quad \left\| \phi^{n} \right\|_{L^\infty} \leq C_1, \quad 0\leq n \leq T/\tau,
    \end{align} 
    and 
    \begin{align} 
        \frac{1}{2}\left\| e_{\phi}^{n} \right\|_{H^{1}}^2 +  \left| \tilde{q}^{n} \right|^2 + \frac{\tau}{5}\sum_{k=1}^{n}\left\| \nabla e_{\mu}^{k} \right\|^2 \leq C_2\tau^2,\quad 0\leq n \leq T/\tau,\label{Error estimation formula} 
    \end{align}
    Here in particular, $C_1$ depends  on $d, T, M, C_{0}$ and the norms of $\phi^{0},f$ on $\Omega$ but is  independent of $S$. 
\end{theorem}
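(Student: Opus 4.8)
The plan is to establish the two assertions \eqref{uniform bound} and \eqref{Error estimation formula} \emph{simultaneously} by induction on $n$: because $f',f''$ grow only polynomially, the control of the nonlinear error terms requires an a priori bound on $\phi^n$, yet that bound is itself a by-product of the error estimate, so the two must be bootstrapped together. Note that this argument is designed to work for \emph{any} $S\ge 0$ and therefore does \emph{not} invoke the stability Theorem \ref{Energy Law of iSAV-BE} (whose admissible $S$ depends on the unknown trajectory); the needed a priori bounds are generated by the induction itself. First I would set up the error equations. With $\mathcal{L}=\mathcal{G}=-\Delta$, subtracting \eqref{iSAV-BE scheme} from \eqref{gradient flow} sampled at $t_{n+1}$ gives
\begin{align*}
\tfrac{1}{\tau}(e_\phi^{n+1}-e_\phi^n)&=\Delta e_\mu^{n+1}+R_\phi^{n+1},\\
e_\mu^{n+1}&=-\Delta e_\phi^{n+1}+\Big(\tfrac{\tilde r^{n+1}}{\sqrt{\mathcal{F}[\phi^n]}}f(\phi^n)-f(\phi(t_{n+1}))\Big)+S(\phi^{n+1}-\phi^n),
\end{align*}
together with the scalar relation for $\tilde q^{n+1}$ read off from \eqref{iSAV-BE scheme 3}. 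Using the regularity \eqref{Assumption of exact solution 0} (in particular $\phi_{tt}\in L^2(0,T;H^{-1})$), the consistency residuals obey $\tau\sum_k\|R_\phi^k\|_{H^{-1}}^2\lesssim\tau^2$, with the scalar truncation likewise summing to $\mathcal{O}(\tau^2)$, so these source terms are harmless.

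The core of the inductive step is an energy estimate. Assuming \eqref{Error estimation formula} for all $k\le n$, the bound $\|e_\phi^k\|_{H^1}\lesssim\tau$ combined with $\phi\in L^\infty(0,T;H^2)$ yields $\|\phi^k\|_{H^1}\le M+1$, hence, via $H^1\hookrightarrow L^6$ in $d\le 3$, uniform $L^6$ bounds as well as $\mathcal{F}[\phi^k]\ge C_0$ and bounds on $r[\phi^k]$ and on the factor $\tilde r^{n+1}/\sqrt{\mathcal{F}[\phi^n]}$. I would then test the first error equation with $e_\mu^{n+1}$, producing the dissipation $-\|\nabla e_\mu^{n+1}\|^2$, and test the second with $\tfrac1\tau(e_\phi^{n+1}-e_\phi^n)$, producing $\tfrac1{2\tau}\big(\|\nabla e_\phi^{n+1}\|^2-\|\nabla e_\phi^n\|^2+\|\nabla(e_\phi^{n+1}-e_\phi^n)\|^2\big)$ plus the nonnegative stabilization contribution. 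Since the two identities share the same left-hand side $\tfrac1\tau\langle e_\mu^{n+1},e_\phi^{n+1}-e_\phi^n\rangle$, equating them removes the cross term. Because $e_\phi$ is mean-zero (both \eqref{iSAV-BE scheme} and \eqref{gradient flow} conserve mass under periodic boundary conditions), Poincar\'e's inequality upgrades the resulting control of $\|\nabla e_\phi^{n+1}\|$ to the full $H^1$ norm appearing in \eqref{Error estimation formula}.

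The decisive step is the nonlinear and scalar coupling. After integration by parts the nonlinear error pairs with $\nabla e_\mu^{n+1}$, so it must be estimated in the $H^1$-seminorm; I would split
\[
\tfrac{\tilde r^{n+1}}{\sqrt{\mathcal{F}[\phi^n]}}f(\phi^n)-f(\phi(t_{n+1}))
=\tfrac{\tilde q^{n+1}}{\sqrt{\mathcal{F}[\phi^n]}}f(\phi^n)
+r(t_{n+1})\Big(\tfrac{f(\phi^n)}{\sqrt{\mathcal{F}[\phi^n]}}-\tfrac{f(\phi(t_{n+1}))}{\sqrt{\mathcal{F}[\phi(t_{n+1})]}}\Big),
\]
so the first term is linear in the scalar error $\tilde q^{n+1}$ while the second reduces, through the mean value theorem, to contributions of the form $f'(\xi)\nabla\phi\cdot e_\phi$ and $f''$-type differences plus $\mathcal{O}(\tau)$ truncation. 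Here both growth conditions enter: \eqref{f condition 1} controls the $f'\nabla\phi$ part and \eqref{f condition 2} the second-order part, each estimated by H\"older, $H^1\hookrightarrow L^6$ and interpolation together with the $W^{1,\infty}\cap H^2$ regularity of $\phi$, so that every piece is dominated by $\eps\|\nabla e_\mu^{n+1}\|^2+\eps\|\nabla e_\phi^{n+1}\|^2+C(\|e_\phi^{n+1}\|_{H^1}^2+\|e_\phi^n\|_{H^1}^2+|\tilde q^{n+1}|^2)$ plus a summable $\mathcal{O}(\tau^2)$ remainder, with the small-$\eps$ terms absorbed by the dissipation. In parallel, reading $\tilde q^{n+1}$ off \eqref{iSAV-BE scheme 3} and comparing with the exact $r$-law gives $|\tilde q^{n+1}|\lesssim\|e_\phi^n\|_{H^1}+\|e_\phi^{n+1}\|_{H^1}+\tau$, which both closes the scalar part of \eqref{Error estimation formula} and, after a further Young inequality, controls the first term above. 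Setting $a_n=\tfrac12\|e_\phi^n\|_{H^1}^2+|\tilde q^n|^2$, one arrives at $a_{n+1}-a_n+\tfrac{\tau}{5}\|\nabla e_\mu^{n+1}\|^2\le C\tau(a_{n+1}+a_n)+s_{n+1}$ with $\sum_k s_k\lesssim\tau^2$; absorbing $C\tau a_{n+1}$ for $\tau<\tau_0$ and applying discrete Gronwall yields \eqref{Error estimation formula}, whence $\|\phi^{n+1}\|_{H^1}^2\le M+1$ closes the induction. The $L^\infty$ bound $\|\phi^n\|_{L^\infty}\le C_1$ then follows from an $H^2$ bound on $\phi^n$ (the forthcoming Corollary \ref{cor h2 bound}, for which the hypothesis $\phi^0\in H^8(\Omega)$ supplies the starting regularity) via $H^2\hookrightarrow L^\infty$ for $d\le 3$.

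The hard part will be closing the nonlinear estimate in three dimensions: since $H^1\not\hookrightarrow L^\infty$ when $d=3$, one cannot pull $f'$ out in $L^\infty$, and the whole argument hinges on the restriction $p<4$ (and $p'<3$) in \eqref{f condition} to make the H\"older/interpolation exponents admissible while still leaving a factor of $\eps\|\nabla e_\mu^{n+1}\|^2$ to absorb the critical terms. Equally delicate is the SAV-specific coupling: $\tilde q^{n+1}$ depends on $e_\phi^{n+1}$ rather than merely on $\tilde q^n$, so the $\phi$- and $r$-estimates cannot be decoupled and must be balanced through Young's inequality, while the prefactor $\tilde r^{n+1}/\sqrt{\mathcal{F}[\phi^n]}$ must be kept bounded uniformly in $n$ — which is precisely where the inductive $H^1$ bound and the lower bound $\mathcal{F}\ge C_0$ are indispensable.
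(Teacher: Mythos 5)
Your overall strategy coincides with the paper's: a single induction that bootstraps the $H^1$ bound \eqref{uniform bound} together with the error estimate \eqref{Error estimation formula}, energy identities obtained by testing the error equations with $e_{\mu}^{n+1}$ and with the time increment, a splitting of the nonlinear error into a part linear in $\tilde q^{n+1}$ plus the difference of the $f/\sqrt{\mathcal{F}}$ quotients (the paper's $\eta_2^n$ and $R_2^n$), and a discrete Gronwall argument. The one genuinely different ingredient is your handling of the scalar error. The paper multiplies \eqref{Error equation 3} by $2\tilde q^{n+1}$ and telescopes $2\tilde q^{n+1}(\tilde q^{n+1}-q^n)$ into $|\tilde q^{n+1}|^2-|\tilde q^{n}|^2+\cdots+2\tilde q^{n+1}\left(\tilde r^{n}-r[\phi^{n}]\right)$; the mismatch $\tilde r^{n}-r[\phi^{n}]$ must then be shown to be $\mathcal{O}(\tau^2)$ via a second-order Taylor expansion of the functional $r[\cdot]$ (see \eqref{20240413}--\eqref{estimate A7}), and it is exactly this step that forces the $H^4$ a priori bound of Proposition \ref{H4 boundness} and hence the hypothesis $\phi^{0}\in H^{8}$. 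You instead slave the scalar error to the field error via $|\tilde q^{n+1}|\lesssim\|e_{\phi}^{n}\|_{H^1}+\|e_{\phi}^{n+1}\|_{H^1}+\tau$, read off directly from \eqref{iSAV-BE scheme 3}; after substituting $e_{\phi}^{n+1}-e_{\phi}^{n}=\tau\Delta e_{\mu}^{n+1}-\tau\eta_1^{n}$ into the coupling term, this produces the same Gronwall structure while bypassing the term $A_9$ entirely. If carried out carefully this route is in fact simpler and would need only the $H^2$ a priori bound (so $\phi^{0}\in H^{4}$ would suffice), which is a genuine improvement over the paper's argument.

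The point you must repair is the role of the $H^2$ bound on the \emph{numerical} solution. You invoke it only at the very end, to deduce $\|\phi^{n}\|_{L^\infty}\le C_1$, and you attribute it to Corollary \ref{cor h2 bound} (which actually concerns the bound $S_0\le S_1$); the relevant statement is Proposition \ref{H2 boundness}, which must be established \emph{before} the induction step, conditionally on the inductive $H^1$ hypothesis. This bound is needed inside essentially every nonlinear estimate of your argument: with only $\|\phi^{n}\|_{H^1}\lesssim 1$ and \eqref{f condition} at $p$ close to $4$ in $d=3$, quantities such as $\|f'(\phi^{n})\nabla e_{\phi}^{n}\|$, $\|\nabla f(\phi^{n})\|$ and $\|f(\phi^{n})\|_{H^1}$ --- all of which enter your slave estimate for $\tilde q^{n+1}$ and your $H^1$ estimate of the $f/\sqrt{\mathcal{F}}$ difference --- are not controlled by $H^1\hookrightarrow L^6$ and H\"older alone; one needs $\phi^{n}\in L^\infty$, i.e.\ the conditional $H^2$ bound and $H^2\hookrightarrow L^\infty$. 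Two smaller slips: the stabilization contribution in your error equation is $S(\phi^{n+1}-\phi^{n})=S(e_{\phi}^{n+1}-e_{\phi}^{n})+S\eta_3^{n}$, and only the first piece is sign-definite --- the cross term $S\langle\eta_3^{n},e_{\phi}^{n+1}-e_{\phi}^{n}\rangle$ is the paper's $A_7$ and must be estimated (it is where the $S$-dependence of $\tau_0$ and $C_2$ enters); and you should note explicitly that the constant in Proposition \ref{H2 boundness} is independent of $S$ (because the $S$-terms there carry a favorable sign), since the theorem asserts $C_1$ is $S$-independent.
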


The uniform $L^\infty$ bound of the numerical solution in (\ref{uniform bound}) can immediately lead to the following result that accomplishes Theorem \ref{Energy Law of iSAV-BE}.
\begin{corollary}\label{cor h2 bound}
  Denote $\displaystyle S_1=\frac12\max_{|\rho|\leq C_1}\left\{ f'\left(\rho \right) \right\}<\infty$. Let the assumptions and conditions in Theorem \ref{Theo eroor estimate} be satisfied,  then
    $$S_0=\frac{1}{2}\max_{0\leq n \leq T/\tau - 1}\;\max_{\mathbf{x}\in\Omega,\,\rho
        \in(0,1)}\left\{ f'\left( \left( 1-\rho \right)\phi^{n} + \rho\phi^{n+1} \right) \right\}\leq S_1,$$ and so by chosen  $S\in[S_1,\infty)$, the iSAV-BE scheme (\ref{iSAV-BE scheme}) with the original energy-stability (\ref{Original Energy Law of iSAV-BE}) is well-defined. 
\end{corollary}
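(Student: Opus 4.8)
The plan is to feed the uniform $L^\infty$ bound of Theorem~\ref{Theo eroor estimate} into the definition of $S_0$, combined with the continuity of $f'$; the one genuinely delicate point is the apparent circularity, since $S_0$ is built from the iterates $\{\phi^n\}$, which themselves depend on the stabilizing parameter $S$.

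To untangle this, I would first fix an arbitrary $S\geq S_1$ and generate the sequence $\{\phi^n\}_{0\leq n\leq T/\tau}$ from the iSAV-BE scheme~\eqref{iSAV-BE scheme}; these iterates exist and are unique because $\mathcal{A}=I+\tau\mathcal{G}(\mathcal{L}+S)$ is invertible for every $S\geq0$. The decisive observation is that Theorem~\ref{Theo eroor estimate} applies for \emph{any} $S\geq0$ and supplies the bound $\|\phi^n\|_{L^\infty}\leq C_1$ for all $0\leq n\leq T/\tau$, where $C_1$ is \emph{independent of $S$}. It is precisely this $S$-independence that breaks the circularity: the single constant $C_1$ dominates all iterates regardless of which admissible $S$ produced them.

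With this in hand, the estimate $S_0\leq S_1$ becomes a routine pointwise argument. For each $0\leq n\leq T/\tau-1$, $\mathbf{x}\in\Omega$, and $\rho\in(0,1)$, the value $(1-\rho)\phi^n(\mathbf{x})+\rho\phi^{n+1}(\mathbf{x})$ is a convex combination of two reals of modulus at most $C_1$, so by the triangle inequality it too has modulus at most $C_1$; hence $f'\bigl((1-\rho)\phi^n+\rho\phi^{n+1}\bigr)\leq\max_{|\zeta|\leq C_1}f'(\zeta)=2S_1$. Taking the supremum over $n,\mathbf{x},\rho$ yields $S_0\leq S_1$, and finiteness $S_1<\infty$ follows from $F\in C^5$, which makes $f'=F''$ continuous and hence bounded on the compact interval $[-C_1,C_1]$.

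Finally, since the chosen $S$ satisfies $S\geq S_1\geq S_0$, condition~\eqref{S1 condition} of Theorem~\ref{Energy Law of iSAV-BE} holds for the very solution this $S$ generated, so the original energy dissipation law~\eqref{Original Energy Law of iSAV-BE} is in force and the energy-stable scheme is well-defined with the explicit, finite choice $S=S_1$. I expect the main obstacle to be keeping the logical order straight around the circularity: one must exploit the $S$-uniform constant $C_1$ to bound $S_0$ \emph{before} fixing $S\geq S_0$, rather than presupposing $S_0$ finite; the convexity estimate and the continuity argument are otherwise straightforward.
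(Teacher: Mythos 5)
Your argument is correct and is essentially the one the paper intends: the corollary is stated as an immediate consequence of the $S$-independent $L^\infty$ bound $\|\phi^n\|_{L^\infty}\leq C_1$ from Theorem \ref{Theo eroor estimate}, with the convex-combination and continuity-of-$f'$ steps exactly as you give them. Your explicit discussion of the apparent circularity (resolved by the $S$-independence of $C_1$) is a faithful elaboration of the point the paper itself flags in the statement of Theorem \ref{Theo eroor estimate}, not a different route.
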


The convergence for other gradient flows, e.g., the Allen-Cahn equation ($\mathcal{G}=I,\mathcal{L}=-\Delta$), can be established similarly, and we omit the results here for brevity. We will then prove Theorem \ref{Theo eroor estimate} in the next two subsections. 
\begin{remark}
    Compared to the traditional SAV-BE scheme (see Lemma 2.4 in \cite{Shen2018ConvergenceSAV}), the assumption for iSAV-BE in Theorem \ref{Theo eroor estimate} enhances the regularity requirement of $\phi^{0}$ from $H^{4}$ to $H^{8}$. This is only for the technical  analysis reason,  where an $H^{4}$-bound of the numerical solution is called in the proof (in (\ref{estimate A7})). It should be noted that such condition (even $\phi^{0}\in H^{4}$ for SAV-BE) is not necessary in practical computing. Relaxing the regularity requirement or finding the optimal condition is a future analytical task. 
\end{remark}

\subsection{Prior estimates of numerical solution}
To give the proof, we need to first establish some prior estimates for the numerical solution of iSAV-BE and we shall need a tool lemma which we directly quote from \cite{Shen2018ConvergenceSAV}. 

\begin{lemma}[\cite{Shen2018ConvergenceSAV}] \label{f estimate lem}
     Assume that $\left\| u \right\|_{H^{1}}^2\leq M+1$ and (\ref{f condition}) holds. Then for any $u\in H^{4}$, there exist some $0\leq \sigma\leq 1$ and a constant $C(d,M)>0$ dependent on $d,M$ such that 
    \begin{align*}
        \left\| \Delta f(u) \right\|^2\leq C(d,M)\left(1+\left\| \Delta^2 u \right\|^{2\sigma}\right).
    \end{align*} 
\end{lemma}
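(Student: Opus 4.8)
The plan is to differentiate twice via the chain rule, writing $\Delta f(u) = f'(u)\,\Delta u + f''(u)\,|\nabla u|^2$, and then to estimate the two resulting terms separately by combining H\"older's inequality with the polynomial growth bounds (\ref{f condition}) and Gagliardo--Nirenberg interpolation. The guiding principle is that every $L^q$ norm below will be interpolated between the \emph{fixed} quantity $\|u\|_{H^1}\le\sqrt{M+1}$ and the single top-order quantity $\|\Delta^2 u\|$, so that the final bound is a power of $\|\Delta^2 u\|$ with a controlled exponent. First I would record the elementary Fourier-side fact that on the periodic box $\|u\|_{H^4}\lesssim \|\Delta^2 u\|+\|u\|_{H^1}$, together with the interpolation estimate $\|u\|_{H^s}\lesssim 1+\|\Delta^2 u\|^{(s-1)/3}$ for $1\le s\le 4$, which follows from Gagliardo--Nirenberg and the $H^1$-bound.

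For the first term I would apply H\"older with exponents $1/q_1+1/q_2=1/2$ to get $\|f'(u)\Delta u\|\le \|f'(u)\|_{L^{q_1}}\|\Delta u\|_{L^{q_2}}\lesssim \big(1+\|u\|_{L^{p q_1}}^{p}\big)\|\Delta u\|_{L^{q_2}}$ by (\ref{f condition 1}), and then bound $\|u\|_{L^{pq_1}}$ and $\|\Delta u\|_{L^{q_2}}$ using the appropriate Sobolev embedding together with the interpolation estimate above. In $d=1,2$ the embeddings $H^s\hookrightarrow L^q$ are available for every finite $q$ at arbitrarily small smoothness, so any growth exponent $p$ can be accommodated with total power of $\|\Delta^2 u\|$ below one; in $d=3$ one instead places $f'(u)$ in $L^\infty$ (governed by $\|u\|_{L^\infty}\lesssim 1+\|\Delta^2 u\|^{1/6}$ at the critical threshold) and $\Delta u$ in $L^2$ via $\|\Delta u\|\lesssim 1+\|\Delta^2 u\|^{1/3}$, which gives exponent $p/6+1/3$, strictly below $1$ exactly because $p<4$. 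The second term is handled identically, writing $\|f''(u)|\nabla u|^2\|\le \|f''(u)\|_{L^{q'}}\|\nabla u\|_{L^{2q}}^2$ and using (\ref{f condition 2}); in $d=3$ the choice $f''(u)\in L^\infty$ and $\nabla u\in L^4$ (so that $\|\nabla u\|_{L^4}^2\lesssim 1+\|\Delta^2 u\|^{1/2}$) produces exponent $p'/6+1/2$, below $1$ precisely because $p'<3$.

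Collecting the two bounds and taking $\sigma$ to be the largest of the occurring exponents of $\|\Delta^2 u\|$ (using Young's inequality to absorb the additive constants) yields $\|\Delta f(u)\|^2\lesssim C(d,M)\big(1+\|\Delta^2 u\|^{2\sigma}\big)$ with $0\le \sigma\le 1$, as claimed. The main obstacle is the bookkeeping in $d=3$: one must choose the H\"older split and the interpolation smoothness indices so that the total power of the top-order norm stays at most $1$, and this is tight. The admissibility windows $p<4$ and $p'<3$ in (\ref{f condition}) are exactly the thresholds at which the relevant Sobolev embeddings ($H^{3/2}$ into $L^\infty$, and the $W^{1,4}$-type control of $\nabla u$) become critical, so the whole estimate hinges on verifying that these exponents remain (strictly, or at worst non-strictly) below the critical values.
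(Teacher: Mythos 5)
The paper gives no proof of this lemma at all: it is quoted verbatim as a tool from \cite{Shen2018ConvergenceSAV}, so there is nothing internal to compare against. Your reconstruction --- the chain-rule decomposition $\Delta f(u)=f'(u)\Delta u+f''(u)|\nabla u|^2$, H\"older with the growth bounds (\ref{f condition}), and Gagliardo--Nirenberg interpolation of every intermediate norm between the fixed $H^1$ bound and $\|\Delta^2 u\|$ --- is correct and is essentially the argument given in that reference. The one point to state carefully is that in $d=3$ the embedding $H^{3/2}\hookrightarrow L^\infty$ fails at the critical index, so you must take $\|u\|_{L^\infty}\lesssim \|u\|_{H^{3/2+\epsilon}}\lesssim 1+\|\Delta^2 u\|^{1/6+\epsilon/3}$ and then check $p(1/6+\epsilon/3)+1/3<1$ and $p'(1/6+\epsilon/3)+1/2<1$ for $\epsilon$ small; this is exactly where the strict inequalities $p<4$ and $p'<3$ are used, and you correctly flag this. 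With that caveat made explicit, the exponent bookkeeping closes and $\sigma$ can be taken strictly less than $1$, which is consistent with (and slightly stronger than) the stated $0\le\sigma\le1$.
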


\begin{proposition}[$H^2$-boundedness]\label{H2 boundness}
    Assume that (\ref{f condition}) holds, $\phi^{0}\in H^{4}(\Omega)$ and $\left\| \phi^{n} \right\|_{H^{1}}^2\leq M+1$ for all $0\leq n \leq m$  with some $1\leq m < T/\tau$. Then, there exists a constant $\tau_{1}>0$ independent of $n,m,\tau$ such that for any $0<\tau\leq \tau_{1}$, 
    \begin{align}
        &\left\| \Delta\phi^{n+1} \right\|^2 + \frac{\tau}{2}\sum_{k=0}^{n+1}\left\| \Delta^2\phi^{k} \right\|^2 \leq C\left( d, T, M, C_{0},\phi^0 \right),  \quad \forall\,0\leq n\leq m,\label{H2 bound}
    \end{align}
   where $C_{0}=\inf_{\phi}\mathcal{F}[\phi]$ and the constant $C\left( d, T, M, C_{0},\phi^0 \right)>0$  depends  on $d, T, M, C_{0}$ and the norms of $\phi^{0}$ on $\Omega$.
\end{proposition}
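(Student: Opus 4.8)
The plan is to establish (\ref{H2 bound}) by an $H^{2}$-type energy estimate applied to the single equation obtained after eliminating $\mu^{n+1}$ and $\tilde r^{n+1}$, testing it against $\Delta^{2}\phi^{n+1}$, and closing by summation. Throughout I treat $\phi^{n}\in H^{4}$ as inherited from $\phi^{0}\in H^{4}$ through the constant-coefficient solve, so that every quantity below is finite. First I would control the scalar coefficient. Since $S\geq 0$ and $\mathcal N_{1}^{n}\geq 0$, the energy identity (\ref{20240111-03}) from the proof of Theorem \ref{Energy Law of iSAV-BE} gives $\tilde E^{n+1}\leq \mathcal E[\phi^{n}]$, whence $(\tilde r^{n+1})^{2}\leq \tilde E^{n+1}\leq \mathcal E[\phi^{n}]$. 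Using the hypothesis $\|\phi^{n}\|_{H^{1}}^{2}\leq M+1$, the polynomial growth of $F$ implied by (\ref{f condition}) and the embedding $H^{1}\hookrightarrow L^{6}$ (valid for $d\leq 3$, with $F(\phi)\lesssim |\phi|^{p+2}+1$ and $p+2<6$), I obtain $\mathcal E[\phi^{n}]=\tfrac12\|\nabla\phi^{n}\|^{2}+\mathcal F[\phi^{n}]\leq C(M)$; combined with $\mathcal F[\phi^{n}]\geq C_{0}$, this yields $|\tilde r^{n+1}|/\sqrt{\mathcal F[\phi^{n}]}\leq C(M,C_{0})$ uniformly in $n$.

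Next I would form the reduced equation from (\ref{iSAV-BE scheme 1})--(\ref{iSAV-BE scheme 2}),
\[
\frac1\tau\left(\phi^{n+1}-\phi^{n}\right)=-\Delta^{2}\phi^{n+1}+\frac{\tilde r^{n+1}}{\sqrt{\mathcal F[\phi^{n}]}}\Delta f(\phi^{n})+S\Delta\left(\phi^{n+1}-\phi^{n}\right),
\]
and take its $L^{2}$ inner product with $\Delta^{2}\phi^{n+1}$. The time-difference term produces, via $\langle a-b,a\rangle=\tfrac12(\|a\|^{2}-\|b\|^{2}+\|a-b\|^{2})$ with $a=\Delta\phi^{n+1}$, the telescoping quantity $\tfrac{1}{2\tau}(\|\Delta\phi^{n+1}\|^{2}-\|\Delta\phi^{n}\|^{2}+\|\Delta(\phi^{n+1}-\phi^{n})\|^{2})$; the biharmonic term gives the dissipation $-\|\Delta^{2}\phi^{n+1}\|^{2}$; and the stabilization term, after integration by parts, is sign-definite and telescoping, $S\langle\Delta(\phi^{n+1}-\phi^{n}),\Delta^{2}\phi^{n+1}\rangle=-\tfrac S2(\|\nabla\Delta\phi^{n+1}\|^{2}-\|\nabla\Delta\phi^{n}\|^{2}+\|\nabla\Delta(\phi^{n+1}-\phi^{n})\|^{2})$.

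The main obstacle is the nonlinear term $\tfrac{\tilde r^{n+1}}{\sqrt{\mathcal F[\phi^{n}]}}\langle\Delta f(\phi^{n}),\Delta^{2}\phi^{n+1}\rangle$. I would bound it by $C(M,C_{0})\|\Delta f(\phi^{n})\|\,\|\Delta^{2}\phi^{n+1}\|$ and invoke Lemma \ref{f estimate lem} to get $\|\Delta f(\phi^{n})\|\leq C(d,M)(1+\|\Delta^{2}\phi^{n}\|^{\sigma})$ with $0\leq\sigma<1$; the strict bound $\sigma<1$ is precisely what the subcritical growth restrictions $p<4$ and $p'<3$ secure in $d=3$. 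A first Young step absorbs $\tfrac12\|\Delta^{2}\phi^{n+1}\|^{2}$ into the dissipation, and, because $\sigma<1$, a second Young step replaces $\|\Delta^{2}\phi^{n}\|^{2\sigma}$ by $\tfrac14\|\Delta^{2}\phi^{n}\|^{2}+C$.

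Finally I would multiply by $2\tau$ and sum over $n$: the $\|\Delta\phi\|^{2}$ terms telescope, the nonnegative stabilization and time-difference contributions are discarded on the left, and the mismatched dissipation sums ($\sum_{k=1}^{N+1}$ on the left against $\tfrac12\sum_{k=0}^{N}$ on the right) combine so that a full $\tfrac\tau2\sum_{k=1}^{N+1}\|\Delta^{2}\phi^{k}\|^{2}$ survives on the left with no residual top-order term. The only stabilization remainder is the harmless initial contribution $\tfrac{S\tau}{2}\|\nabla\Delta\phi^{0}\|^{2}$, finite since $\phi^{0}\in H^{4}$. Using $\tau(N+1)\leq T$ and $\tau\leq\tau_{1}$, I arrive at (\ref{H2 bound}) with the constant depending only on $d,T,M,C_{0}$ and the norms of $\phi^{0}$. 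The delicate point is keeping the closure non-circular: it works only because $\sigma<1$ lets the leftover $\|\Delta^{2}\phi^{n}\|^{2\sigma}$ be absorbed into a small fraction of the same-order dissipation, rather than forcing a Gronwall inequality on the dissipation itself.
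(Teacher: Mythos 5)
Your proposal is correct and follows essentially the same route as the paper: the same reduced equation tested against $\tau\Delta^{2}\phi^{n+1}$, the same use of \eqref{20240111-03} to bound $\tilde r^{n+1}/\sqrt{\mathcal F[\phi^{n}]}$, Lemma \ref{f estimate lem} with the two-step Young absorption exploiting $\sigma<1$, and the final telescoping summation. The only cosmetic differences are that the paper bounds $(\tilde r^{n+1})^{2}/\mathcal F[\phi^{n}]$ directly via $(r[\phi^{n}])^{2}=\mathcal F[\phi^{n}]$ rather than through the growth of $F$ and Sobolev embedding, and it imposes $\tau_{1}S\lesssim 1$ so that the leftover term $\tau S\|\nabla\Delta\phi^{0}\|^{2}$ does not make the final constant depend on $S$ — a point you should note since the stated constant omits $S$.
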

\begin{proof}
    Plugging  (\ref{iSAV-BE scheme 2}) into (\ref{iSAV-BE scheme 1}), we obtain
    \begin{align}
        \frac{1}{\tau}\left( \phi^{n+1} - \phi^{n} \right) = -\Delta^2\phi^{n+1} + \frac{\tilde{r}^{n+1}}{\sqrt{\mathcal{F}[\phi^{n}]}}\Delta f(\phi^{n}) + S\Delta\left( \phi^{n+1} - \phi^{n} \right), \label{20240327-01}
    \end{align}
    and by the regularity of elliptical equations, we can have $\phi^{n}\in H^{4}$ for all $n\geq0$. Now we aim for the desired bound (\ref{H2 bound}).   
    Taking the $L^{2}$ inner product of (\ref{20240327-01}) with $\tau\Delta^{2}\phi^{n+1}$, we obtain
    \begin{align*}
        &\frac{1}{2}\left( \left\| \Delta\phi^{n+1} \right\|^2 - \left\| \Delta\phi^{n} \right\|^2 + \left\| \Delta\left( \phi^{n+1} - \phi^{n} \right) \right\|^2 \right) \nonumber\\
        =& \,\tau\left\langle -\Delta^2\phi^{n+1} + \frac{\tilde{r}^{n+1}}{\sqrt{\mathcal{F}[\phi^{n}]}}\Delta f(\phi^{n}) + S\Delta\left( \phi^{n+1} - \phi^{n} \right), \Delta^2\phi^{n+1}  \right\rangle \nonumber\\
        =& -\tau\left\| \Delta^2\phi^{n+1} \right\|^2 + \tau\frac{\tilde{r}^{n+1}}{\sqrt{\mathcal{F}[\phi^{n}]}}\left\langle \Delta f(\phi^{n}), \Delta^2\phi^{n+1} \right\rangle - \frac{\tau}{2} S\left( \left\| \nabla\Delta\phi^{n+1} \right\|^2 - \left\| \nabla\Delta\phi^{n} \right\|^2 + \left\| \nabla\Delta\left( \phi^{n+1}-\phi^{n} \right) \right\|^2 \right).
    \end{align*}
    From (\ref{20240111-03}), we can derive that 
    \begin{align*}
        \frac{\left( \tilde{r}^{n+1} \right)^2}{\mathcal{F}[\phi^{n}]} \leq \frac{ \frac{1}{2}\left\| \phi^{n} \right\|_{H^{1}}^2 + \left( r\left[\phi^{n}\right] \right)^2 }{\mathcal{F}[\phi^{n}]} \leq \frac{M}{2C_{0}} + 1,
    \end{align*}
    and then by using Lemma \ref{f estimate lem} and Young's inequality, we have
    \begin{align*}
        &\frac{1}{2}\left( \left\| \Delta\phi^{n+1} \right\|^2 - \left\| \Delta\phi^{n} \right\|^2 \right) + \tau\left\| \Delta^2\phi^{n+1} \right\|^2 + \frac{\tau}{2}S\left( \left\| \nabla\Delta\phi^{n+1} \right\|^2 - \left\| \nabla\Delta\phi^{n} \right\|^2 \right) \nonumber\\
        \leq& \tau\frac{\tilde{r}^{n+1}}{\sqrt{\mathcal{F}[\phi^{n}]}}\left\langle \Delta f(\phi^{n}), \Delta^2\phi^{n+1} \right\rangle \leq \tau C(M,C_{0})\left\| \Delta f(\phi^{n}) \right\|^2 + \frac{\tau}{2}\left\| \Delta^2\phi^{n+1} \right\|^2 \nonumber\\
        \leq& \tau C(d,M,C_{0}) \left( 1 + \left\| \Delta^2\phi^{n} \right\|^{2\sigma} \right) +  \frac{\tau}{2}\left\| \Delta^2\phi^{n+1} \right\|^2 \leq \tau C(d,M,C_{0})  + \frac{\tau}{4}\left\| \Delta^2\phi^{n} \right\|^2 + \frac{\tau}{2}\left\| \Delta^2\phi^{n+1} \right\|^2.
    \end{align*}
    Thus, we have for all $0\leq k\leq n$,
    $$\left\| \Delta\phi^{k+1} \right\|^2 - \left\| \Delta\phi^{k} \right\|^2  + \tau\left\| \Delta^2\phi^{k+1} \right\|^2+ \tau S\left( \left\| \nabla\Delta\phi^{k+1} \right\|^2 - \left\| \nabla\Delta\phi^{k} \right\|^2 \right) \leq \tau C(d,M,C_{0})+\frac{\tau}{2}
    \left\| \Delta^2\phi^{k} \right\|^2. $$
    By summing $k$ up from $0$ to $n$ and the condition $0<\tau\leq\tau_{1}$ with  $\tau_1 S\lesssim1$, we obtain  (\ref{H2 bound}).
\end{proof}

Unlike the  SAV-BE method, for convergence analysis of iSAV-BE, we will further need an  $H^4$-boundedness of the numerical solution.
\begin{proposition}[$H^4$-boundedness]\label{H4 boundness}
    Assume that (\ref{f condition}) holds, $\phi^{0}\in H^{8}(\Omega)$ and $\left\| \phi^{n} \right\|_{H^{1}}^2\leq M+1$ for all $0\leq n \leq m$  with some $1\leq m < T/\tau$. Then for any $0<\tau\leq\tau_{1}$ (the $\tau_1$ from Proposition \ref{H2 boundness}), it holds that
    \begin{align}
        &\left\| \Delta^2\phi^{n+1} \right\|^2 + \frac{\tau}{2}\sum_{k=0}^{n+1}\left\| \Delta^3\phi^{k} \right\|^2 \leq C\left( d, T, M, C_{0},\phi^0, f\right),  \quad \forall\,0\leq n\leq m,\label{H4 bound}
    \end{align}
    for some constant $C\left( d, T, M, C_{0},\phi^0 ,f\right)>0$ that depends only on $d, T, M, C_{0}$ and the norms of $\phi^{0},f$ on $\Omega$.
\end{proposition}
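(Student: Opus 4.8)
The plan is to mirror the energy argument of Proposition \ref{H2 boundness}, but to test the scheme against a higher-order multiplier. Starting from the reformulation (\ref{20240327-01}), I would first record that, thanks to Proposition \ref{H2 boundness} and the hypothesis $\|\phi^{n}\|_{H^{1}}^2\le M+1$, the norm $\|\phi^{n}\|_{H^{2}}$ is controlled uniformly in $n$; then, by elliptic regularity of the constant-coefficient operator $I+\tau\mathcal{G}(\mathcal{L}+S)=I+\tau\Delta^2-\tau S\Delta$ applied to (\ref{20240327-01}), together with a bootstrap starting from $\phi^{0}\in H^{8}$, every $\phi^{n}$ is smooth enough (e.g. $\phi^{n}\in H^{8}$) to justify all the integrations by parts below. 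I would then take the $L^{2}$ inner product of (\ref{20240327-01}) with $\tau\Delta^{4}\phi^{n+1}$. Using periodicity and the self-adjointness of $\Delta$, the time-difference term telescopes into $\frac{1}{2}(\|\Delta^{2}\phi^{n+1}\|^2-\|\Delta^{2}\phi^{n}\|^2+\|\Delta^{2}(\phi^{n+1}-\phi^{n})\|^2)$, the principal part $-\Delta^{2}\phi^{n+1}$ produces the dissipation $-\tau\|\Delta^{3}\phi^{n+1}\|^2$, and the stabilization $S\Delta(\phi^{n+1}-\phi^{n})$ yields, after integrating by parts, the telescoping quantity $-\frac{\tau S}{2}(\|\nabla\Delta^{2}\phi^{n+1}\|^2-\|\nabla\Delta^{2}\phi^{n}\|^2+\|\nabla\Delta^{2}(\phi^{n+1}-\phi^{n})\|^2)$, whose only positive survivor after summation is $\frac{\tau S}{2}\|\nabla\Delta^{2}\phi^{0}\|^2$, finite since $\phi^{0}\in H^{8}$ (provided $\tau S\lesssim1$, exactly as in Proposition \ref{H2 boundness}).

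The heart of the matter is the nonlinear contribution, which after one integration by parts reads $\tau\frac{\tilde{r}^{n+1}}{\sqrt{\mathcal{F}[\phi^{n}]}}\langle\Delta^{2}f(\phi^{n}),\Delta^{3}\phi^{n+1}\rangle$. Using $\frac{(\tilde{r}^{n+1})^2}{\mathcal{F}[\phi^{n}]}\le\frac{M}{2C_{0}}+1$ (as in Proposition \ref{H2 boundness}) and Young's inequality, this is bounded by $\frac{\tau}{4}\|\Delta^{3}\phi^{n+1}\|^2+\tau C\|\Delta^{2}f(\phi^{n})\|^2$, the first term being absorbed by the dissipation. Thus everything reduces to an $H^{4}$-analogue of Lemma \ref{f estimate lem}, namely an estimate of the form $\|\Delta^{2}f(\phi^{n})\|^2\le C(1+\|\Delta^{3}\phi^{n}\|^{2\sigma'})$ for some $\sigma'<1$.

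To obtain this estimate I would expand $\Delta^{2}f(u)$ by the chain and product rules into the leading term $f'(u)\Delta^{2}u$ plus finitely many lower-order terms of the schematic form $f^{(j)}(u)\,D^{a_{1}}u\cdots D^{a_{j}}u$ with $a_{1}+\cdots+a_{j}\le4$ and $j\ge2$. Since $\|\phi^{n}\|_{H^{2}}$ is bounded, the embedding $H^{2}\hookrightarrow L^{\infty}$ (valid for $d\le3$) yields a uniform $L^{\infty}$ bound on $\phi^{n}$, hence a uniform $L^{\infty}$ bound on each $f^{(j)}(\phi^{n})$, $1\le j\le4$, by continuity of $f^{(j)}$ (here $F\in C^{5}$ gives $f=F'\in C^{4}$); no growth hypothesis beyond (\ref{f condition}) is needed at this stage. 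The leading term obeys $\|f'(u)\Delta^{2}u\|\le C\|\Delta^{2}u\|$, which by the interpolation $\|u\|_{H^{4}}\lesssim\|u\|_{H^{2}}^{1/2}\|u\|_{H^{6}}^{1/2}$ and the bounded $H^{2}$ norm is $\lesssim C(1+\|\Delta^{3}u\|^{1/2})$; each remaining term is estimated by Hölder together with the Sobolev embeddings in $d\le3$ and a Gagliardo--Nirenberg interpolation, which always leaves a power $\|\Delta^{3}u\|^{2\sigma'}$ with $\sigma'<1$. The main obstacle I anticipate is precisely this bookkeeping in the borderline dimension $d=3$: for every nonlinear term one must check that the combination of Hölder exponents and interpolation indices produces a strictly subcritical power of $\|\Delta^{3}\phi^{n}\|$, so that it can later be absorbed.

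Finally I would combine the pieces. Young's inequality turns $\tau C\|\Delta^{2}f(\phi^{n})\|^2\le\tau C(1+\|\Delta^{3}\phi^{n}\|^{2\sigma'})$ into $\tau C+\frac{\tau}{4}\|\Delta^{3}\phi^{n}\|^2$ (using $2\sigma'<2$). Summing the resulting inequality over $k=0,\dots,n$, the stabilization terms telescope, the constants accumulate to $\mathcal{O}(1)$ on $[0,T]$, and the two occurrences $\frac{\tau}{4}\|\Delta^{3}\phi^{n+1}\|^2$ and $\frac{\tau}{4}\sum_{k}\|\Delta^{3}\phi^{k}\|^2$ are absorbed into the dissipative sum $\tau\sum_{k}\|\Delta^{3}\phi^{k+1}\|^2$ after an index shift, leaving only the constant $\frac{\tau}{4}\|\Delta^{3}\phi^{0}\|^2$, finite since $\phi^{0}\in H^{8}$. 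This yields $\|\Delta^{2}\phi^{n+1}\|^2+\frac{\tau}{2}\sum_{k=0}^{n+1}\|\Delta^{3}\phi^{k}\|^2\le C(d,T,M,C_{0},\phi^{0},f)$, which is exactly (\ref{H4 bound}); the dependence on $f$ enters only through the $L^{\infty}$ bounds of $f',\dots,f^{(4)}$ on the range of $\phi^{n}$.
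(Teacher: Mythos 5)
Your proposal is correct and follows essentially the same route as the paper: testing the reformulated scheme (\ref{20240327-01}) against $\tau\Delta^{4}\phi^{n+1}$, using the bound $(\tilde r^{n+1})^{2}/\mathcal{F}[\phi^{n}]\le M/(2C_{0})+1$ together with Young's inequality, expanding $\Delta^{2}f(\phi^{n})$ by the chain rule with the $H^{2}\hookrightarrow L^{\infty}$ control of $f^{(j)}(\phi^{n})$, and closing via Gagliardo--Nirenberg interpolation to a subcritical power of $\|\Delta^{3}\phi^{n}\|$ before summing. The ``bookkeeping'' you flag for $d=3$ is exactly what the paper carries out term by term, and it does go through.
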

\begin{proof}
    Firstly, from (\ref{20240327-01}) we have $\phi^{n}\in H^{8}$ for all $n$ by the regularity of elliptical equations. Now by taking the $L^{2}$ inner product of (\ref{20240327-01}) with $\tau\Delta^{4}\phi^{n+1}$, we have
    \begin{align*}
        &\frac{1}{2}\left( \left\| \Delta^2\phi^{n+1} \right\|^2 - \left\| \Delta^2\phi^{n} \right\|^2 + \left\| \Delta^2\left( \phi^{n+1} - \phi^{n} \right) \right\|^2 \right) \nonumber\\
        =& -\tau\left\| \Delta^3\phi^{n+1} \right\|^2 + \tau\frac{\tilde{r}^{n+1}}{\sqrt{\mathcal{F}[\phi^{n}]}}\left\langle \Delta f(\phi^{n}), \Delta^4\phi^{n+1} \right\rangle \nonumber\\
        & - \frac{\tau}{2} S\left( \left\| \nabla\Delta^2\phi^{n+1} \right\|^2 - \left\| \nabla\Delta^2\phi^{n} \right\|^2 + \left\| \nabla\Delta^2\left( \phi^{n+1}-\phi^{n} \right) \right\|^2 \right). 
    \end{align*}
    Similarly as before, with $\left( \tilde{r}^{n+1} \right)^2/\mathcal{F}\left[\phi^{n}\right]\leq M/2C_{0}+1$ and Young's inequality, we have
    \begin{align}
        &\frac{1}{2}\left( \left\| \Delta^2\phi^{n+1} \right\|^2 - \left\| \Delta^2\phi^{n} \right\|^2 + \left\| \Delta^2\left( \phi^{n+1} - \phi^{n} \right) \right\|^2 \right) + \tau\left\| \Delta^3\phi^{n+1} \right\|^2 \nonumber\\
        & +\frac{\tau}{2}S\left( \left\| \nabla\Delta^2\phi^{n+1} \right\|^2 - \left\| \nabla\Delta^2\phi^{n} \right\|^2 + \left\| \nabla\Delta^2\left( \phi^{n+1} - \phi^{n} \right) \right\|^2 \right) \nonumber\\
        =& \tau \frac{\tilde{r}^{n+1}}{\sqrt{\mathcal{F}[\phi^{n}]}}\left\langle \Delta^2 f(\phi^{n}), \Delta^3\phi^{n+1} \right\rangle \leq \tau C(M,C_{0}) \left\| \Delta^2 f(\phi^{n}) \right\|^2 + \frac{\tau}{2}\left\| \Delta^3\phi^{n+1} \right\|^2. \label{20240227-01}
    \end{align}
    Next, we need to handle the term $\left\| \Delta^2 f(\phi^{n}) \right\|^2$. From the Proposition \ref{H2 boundness} which gives the uniform boundedness of $\phi^{n}$ in $H^2$ for $0\leq n\leq m$ and the Sobolev embedding $H^{2}\subseteq L^{\infty}$, we have 
    \begin{align*}
        |f(\phi^{n})|,\; |f'(\phi^{n})|,\; |f''(\phi^{n})|,\; |f^{(3)}(\phi^{n})|,\; |f^{(4)}(\phi^{n})| \leq C(d,T,M,C_{0},\phi^{0},f) \quad {\rm for\;all} \quad 0\leq n\leq m, 
    \end{align*}
    By directly calculating,
    \begin{align*}
        \Delta^2 f(\phi^{n}) =& f^{(4)}(\phi^{n})\left| \nabla\phi^{n} \right|^{4} + 6f^{(3)}(\phi^{n})\left| \nabla\phi^{n} \right|^{2}\Delta\phi^{n} + 4f''(\phi^{n})\nabla\Delta\phi^{n}\cdot\nabla\phi^{n} \nonumber\\
        &+ 3f''(\phi^{n})\left| \Delta\phi^{n} \right|^2 + f'(\phi^{n})\Delta^2\phi^{n},
    \end{align*}
    and we can derive that
    \begin{align}
        \left\| \Delta^2 f(\phi^{n}) \right\|^2 \leq& C\Big[\left\| f^{(4)}(\phi^{n}) \right\|_{L^{\infty}}^2\left\| \nabla\phi^{n} \right\|_{L^{8}}^8 + \left\| f^{(3)}(\phi^{n}) \right\|_{L^{\infty}}^2\left( \left\| \nabla\phi^{n} \right\|_{L^{8}}^8 + \left\| \Delta\phi^{n} \right\|_{L^{4}}^4 \right) \nonumber\\
        & + \left\| f''(\phi^{n}) \right\|_{L^{\infty}}^2\left( \left\| \nabla\Delta\phi^{n} \right\|_{L^{4}}^4 + \left\| \nabla\phi^{n} \right\|_{L^{4}}^4 + \left\| \Delta\phi^{n} \right\|_{L^{4}}^4 \right) + \left\| f'(\phi^{n}) \right\|_{L^{\infty}}^2\left\| \Delta^2\phi^{n} \right\|^2\Big] \nonumber\\
        \leq& C\left( d,T,M,C_{0},\phi^0,f \right)\left[ \left\| \nabla\phi^{n} \right\|_{L^{8}}^8 + \left\| \nabla\Delta\phi^{n} \right\|_{L^{4}}^4 + \left\| \Delta\phi^{n} \right\|_{L^{4}}^4 + \left\| \nabla\phi^{n} \right\|_{L^{4}}^4 + \left\| \Delta^2\phi^{n} \right\|^2 \right].\label{H4bound eq1}
    \end{align}
     We can estimate the terms on the right hand side of (\ref{H4bound eq1}) by the interpolation inequality  (see Chapter II, Section 2.1 in \cite{Temam2012infinite}) and Sobolev's embedding together with the result of Proposition \ref{H2 boundness} as follows:
    \begin{align*}
        &\left\| \nabla\phi^{n} \right\|_{L^{8}}^8 \lesssim \left\| \nabla\phi^{n} \right\|_{H^{3d/8}}^8 \lesssim \left\| \nabla\phi^{n} \right\|^{8-3d/5}\left\| \Delta^3\phi^{n} \right\|^{3d/5} \lesssim\left\| \Delta^3\phi^{n} \right\|^{3d/5}, \nonumber\\
        &\left\| \nabla\Delta\phi^{n} \right\|_{L^{4}}^4 \lesssim \left\| \nabla\Delta\phi^{n} \right\|_{H^{d/4}}^4 \lesssim \left\| \Delta\phi^{n} \right\|^{3-d/4}\left\| \Delta^3\phi^{n} \right\|^{1+d/4} \lesssim\left\| \Delta^3\phi^{n} \right\|^{1+d/4}, \nonumber\\
        &\left\| \Delta\phi^{n} \right\|_{L^{4}}^4 \lesssim \left\| \Delta\phi^{n} \right\|_{H^{d/4}}^4 \lesssim \left\| \Delta\phi^{n} \right\|^{4-d/4}\left\| \Delta^3\phi^{n} \right\|^{d/4} \lesssim \left\| \Delta^3\phi^{n} \right\|^{d/4}, \nonumber\\
        &\left\| \nabla\phi^{n} \right\|_{L^{4}}^4 \lesssim \left\| \nabla\phi^{n} \right\|_{H^{d/4}}^4 \lesssim \left\| \nabla\phi^{n} \right\|^{4-d/5}\left\| \Delta^3\phi^{n} \right\|^{d/5} \lesssim \left\| \Delta^3\phi^{n} \right\|^{d/5}, \nonumber\\
        &\left\| \Delta^2\phi^{n} \right\|^2 \lesssim \left\| \Delta\phi^{n} \right\|\left\| \Delta^3\phi^{n} \right\| \lesssim \left\| \Delta^3\phi^{n} \right\|.
    \end{align*}
By substituting the above estimates into (\ref{H4bound eq1}) and with some $\sigma\in(0,1)$,  we can write 
    $$\left\| \Delta^2 f(\phi^{n}) \right\|^2 \leq
    C(d,T,M,C_{0},\phi^0,f)\left\| \Delta^3\phi^{n} \right\|^{2\sigma}.$$
    Then by Young's inequality,  (\ref{20240227-01}) further becomes
    \begin{align*}
        &\frac{1}{2}\left( \left\| \Delta^2\phi^{n+1} \right\|^2 - \left\| \Delta^2\phi^{n} \right\|^2 + \left\| \Delta^2\left( \phi^{n+1} - \phi^{n} \right) \right\|^2 \right) + \tau\left\| \Delta^3\phi^{n+1} \right\|^2 \nonumber\\
        & +\frac{\tau}{2}S\left( \left\| \nabla\Delta^2\phi^{n+1} \right\|^2 - \left\| \nabla\Delta^2\phi^{n} \right\|^2 + \left\| \nabla\Delta^2\left( \phi^{n+1} - \phi^{n} \right) \right\|^2 \right) \nonumber\\
        \leq& \tau C(d,T,M,C_{0},\phi^0,f)\left\| \Delta^3\phi^{n} \right\|^{2\sigma} + \frac{\tau}{2}\left\| \Delta^3\phi^{n+1} \right\|^2 \leq \tau C(d,T,M,C_{0},\phi^0,f) + \frac{\tau}{4}\left\| \Delta^3\phi^{n} \right\|^2 + \frac{\tau}{2}\left\| \Delta^3\phi^{n+1} \right\|^2,
    \end{align*}
    and so for any $0\leq k\leq n$,
   \begin{align*} &\left\| \Delta^2\phi^{k+1} \right\|^2 - \left\| \Delta^2\phi^{k} \right\|^2+\tau\left\| \Delta^3\phi^{k+1} \right\|^2+ 
    \tau S\left( \left\| \nabla\Delta^2\phi^{k+1} \right\|^2 - \left\| \nabla\Delta^2\phi^{k} \right\|^2  \right)\\
    \leq& \tau C(d,T,M,C_{0},\phi^0,f) + \frac{\tau}{2}\left\| \Delta^3\phi^{k} \right\|^2.
    \end{align*}
    By summing the above up for $k$ from $0$ to $n$ and the condition $0<\tau<\tau_{1}$, we can obtain the estimate (\ref{H4 bound}). 
\end{proof}

\subsection{Proof for convergence theorem}
We now turn to the truncation errors of the scheme.  
Denoting $q^{n} := r[\phi^{n}] - r(t_{n})$ and subtracting (\ref{SAV Gradient flow}) from (\ref{iSAV-BE scheme}) at $t_{n+1}$, we can obtain the error equations
\begin{subequations}\label{Error equation}\begin{align}
    &\frac{1}{\tau}\left( e_{\phi}^{n+1} - e_{\phi}^{n} \right) = \Delta e_{\mu}^{n+1} - \eta_{1}^{n}, \label{Error equation 1}\\
    &e_{\mu}^{n+1} = -\Delta e_{\phi}^{n+1} + \frac{\tilde{q}^{n+1}}{\sqrt{\mathcal{F}[\phi^{n}]}}f(\phi^{n}) + S(e_{\phi}^{n+1}-e_{\phi}^{n}) + \eta_{2}^{n} + S\eta_{3}^{n}, \label{Error equation 2}\\
    &\tilde{q}^{n+1} - q^{n} = \frac{1}{2}\int_{\Omega}{ \frac{f(\phi^{n})}{\sqrt{\mathcal{F}[\phi^{n}]}}\left( e_{\phi}^{n+1}- e_{\phi}^{n} \right) }\,{\rm d}{\bf x} - R_{1}^{n} + R_{2}^{n} + R_{3}^{n}, \label{Error equation 3} 
\end{align}\end{subequations}
where the truncation errors are defined as
\begin{align*}
    &\eta_{1}^{n} := \frac{1}{\tau}\int_{t_{n}}^{t_{n+1}}{ \left( t_{n}-s \right)\phi_{tt}(s) }\,{\rm d}{s}, \quad \eta_{2}^n := r(t_{n+1})\left( \frac{f(\phi^{n})}{\sqrt{\mathcal{F}[\phi^{n}]}} - \frac{f(\phi(t_{n+1}))}{\sqrt{\mathcal{F}[\phi(t_{n+1})]}} \right), \\
    &\eta_{3}^n := \int_{t_{n}}^{t_{n+1}}{ \phi_{t}(s) }\,{\rm d}{s}, \quad R_{1}^{n} := \int_{t_{n}}^{t_{n+1}}{ (s-t_{n})r_{tt}(s) }\,{\rm d}{s}, \\
    &R_{2}^n := \frac{1}{2}\int_{\Omega}{ \left( \frac{f(\phi^{n})}{\sqrt{\mathcal{F}[\phi^{n}]}} - \frac{f(\phi(t_{n+1}))}{\sqrt{\mathcal{F}[\phi(t_{n+1})]}} \right)\left( \phi(t_{n+1})-\phi(t_{n}) \right) }\,{\rm d}{\bf x}, \\
    &R_{3}^{n} := \frac{1}{2}\int_{\Omega}{ \frac{f(\phi(t_{n+1}))}{\sqrt{\mathcal{F}[\phi(t_{n+1})]}}\int_{t_{n}}^{t_{n+1}}{ (s-t_{n})\phi_{tt}(s) }\,{\rm d}{s} }\,{\rm d}{\bf x}.
\end{align*}
Their estimates are stated in the following proposition. 
\begin{proposition}[Local truncation error]
    Assume that (\ref{f condition}) holds, $\phi^{0}\in H^{4}(\Omega)$, the exact solution $\phi$ satisfies (\ref{Assumption of exact solution 0}) and $\left\| \phi^{n} \right\|_{H^{1}}^2 \leq M+1$ for all $0\leq n \leq m< T/\tau$. Then for any $0<\tau\leq \tau_{1}$ (the $\tau_1$ from Proposition \ref{H2 boundness}) and all $0\leq n \leq m$, we have
    \begin{subequations}\label{estimate truncation error}\begin{align}
        &\left| R_{1}^{n} \right|^2 \leq \tau^3 C\left\| \nabla\phi \right\|_{L^{\infty}(0,T;L^{\infty})}^2\int_{t_{n}}^{t_{n+1}}{ \left( \left\| \phi_{t}(s) \right\|_{H^{1}}^2 + \left\| \phi_{tt}(s) \right\|_{H^{-1}}^2 \right) }\,{\rm d}{s}, \label{R1}\\
        &\left| R_{2}^{n} \right|^2 \leq \tau^2 C\left\| \phi_{t} \right\|_{L^{\infty}(0,T;H^{-1})}^2\left( \left\| e_{\phi}^{n} \right\|^{2} + \left\| \nabla e_{\phi}^{n} \right\|^{2} \right) + \tau^3 C\left\| \phi_{t} \right\|_{L^{\infty}(0,T;H^{-1})}^2\int_{t_{n}}^{t_{n+1}}{ \left\| \phi_{t}(s) \right\|_{H^{1}}^2 }\,{\rm d}{s}, \label{R2} \\
        &\left| R_{3}^{n} \right|^2 \leq \tau^3 C\left\| \nabla\phi \right\|_{L^{\infty}(0,T;L^{\infty})}^2\int_{t_{n}}^{t_{n+1}}{ \left\| \phi_{tt}(s) \right\|_{H^{-1}}^2 }\,{\rm d}{s}, \label{R3} \\
        &\left\| (-\Delta)^{-\frac{1}{2}}\eta_{1}^{n} \right\|^2\leq \tau C\int_{t_{n}}^{t_{n+1}}{ \left\| \phi_{tt}(s) \right\|_{H^{-1}}^2 }\,{\rm d}{s},  \\
        &\left\| \eta_{2}^{n} \right\|_{H^{1}}^2 \leq C\left( \left\| e_{\phi}^{n} \right\|^{2} + \left\| \nabla e_{\phi}^{n} \right\|^{2} \right) + \tau C\int_{t_{n}}^{t_{n+1}}{ \left\| \phi_{t}(s) \right\|_{H^{1}}^2 }\,{\rm d}{s}, \\
        &\left\| \eta_{3}^{n} \right\|_{H^{1}}^2 \leq \tau C\int_{t_{n}}^{t_{n+1}}{ \left\| \phi_{t}(s) \right\|_{H^{1}}^2 }\,{\rm d}{s}. 
    \end{align}\end{subequations}
    The constants $C>0$ in (\ref{estimate truncation error}) depend on the $d$, $T$, $M$, $C_{0}$, and the norms of $\phi^{0}$, $\phi$ and $f$ on $\Omega$.
\end{proposition}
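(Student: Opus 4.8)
The plan is to split the six estimates into two groups according to their origin. The quantities $\eta_1^n,\eta_3^n,R_1^n,R_3^n$ are pure time-discretization remainders, and I would bound them by combining Taylor's theorem with integral remainder, the Cauchy--Schwarz inequality in time, and the $H^{-1}$--$H^1$ duality pairing. The quantities $\eta_2^n$ and $R_2^n$ measure the consistency error of the nonlinear term and must instead be controlled by a Lipschitz-type analysis of the map $\phi\mapsto f(\phi)/\sqrt{\mathcal F[\phi]}$ in $H^1$; this is where the growth conditions (\ref{f condition}) and the uniform $L^\infty$-bound coming from Proposition \ref{H2 boundness} (via $H^2\hookrightarrow L^\infty$ for $d\le 3$), together with the lower bound $\mathcal F[\phi]\ge C_0>0$, enter.

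For $\eta_3^n=\int_{t_n}^{t_{n+1}}\phi_t(s)\,\mathrm{d}s$, Jensen's inequality immediately gives $\|\eta_3^n\|_{H^1}^2\le \tau\int_{t_n}^{t_{n+1}}\|\phi_t(s)\|_{H^1}^2\,\mathrm{d}s$. For $\eta_1^n$ I would apply $(-\Delta)^{-1/2}$, move it inside the time integral, use Cauchy--Schwarz in $s$ with $\int_{t_n}^{t_{n+1}}(t_n-s)^2\,\mathrm{d}s=\tau^3/3$, and replace $\|(-\Delta)^{-1/2}\phi_{tt}\|$ by $\|\phi_{tt}\|_{H^{-1}}$. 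For $R_3^n$ I would pair the smooth factor $f(\phi(t_{n+1}))/\sqrt{\mathcal F[\phi(t_{n+1})]}$, whose $H^1$-norm is finite by the chain rule and (\ref{Assumption of exact solution 0}), against the rough factor $\int_{t_n}^{t_{n+1}}(s-t_n)\phi_{tt}(s)\,\mathrm{d}s$ in the $H^{-1}$-norm, estimating the latter as above to produce the $\tau^3$ weight. For $R_1^n$ I would first differentiate $r(t)=\sqrt{\mathcal F[\phi(t)]}$ twice, producing terms of the shape $\mathcal F^{-1/2}\langle f(\phi),\phi_{tt}\rangle$, $\mathcal F^{-1/2}\int f'(\phi)\phi_t^2$ and $\mathcal F^{-3/2}(\int f(\phi)\phi_t)^2$; bounding $\langle f(\phi),\phi_{tt}\rangle$ by $\|f(\phi)\|_{H^1}\|\phi_{tt}\|_{H^{-1}}$ is where $\|\nabla\phi\|_{L^\infty}$ surfaces, through $\nabla f(\phi)=f'(\phi)\nabla\phi$, after which Cauchy--Schwarz in $s$ with weight $(s-t_n)$ supplies the $\tau^3$ factor.

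The core of the proposition is the pair $\eta_2^n,R_2^n$. For $\eta_2^n$ I would insert the intermediate value $f(\phi(t_n))/\sqrt{\mathcal F[\phi(t_n)]}$ and split the difference into a \emph{numerical-versus-exact} piece at $t_n$, controlled by $e_\phi^n=\phi^n-\phi(t_n)$, and an \emph{exact-versus-exact} piece between $t_n$ and $t_{n+1}$, controlled by $\int_{t_n}^{t_{n+1}}\phi_t$. Each piece is estimated in $H^1$ by writing the increment of $g(\phi)=f(\phi)/\sqrt{\mathcal F[\phi]}$ along a segment $\phi_\theta$ and differentiating in $\theta$: the integrand contains $f'(\phi_\theta)(\cdot)/\sqrt{\mathcal F[\phi_\theta]}$ together with a nonlocal term weighted by $\mathcal F[\phi_\theta]^{-3/2}$ arising from $\tfrac{\mathrm d}{\mathrm d\theta}\mathcal F[\phi_\theta]^{-1/2}$, and taking the spatial gradient further yields $f''(\phi_\theta)\nabla\phi_\theta$-type factors. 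All coefficients are uniformly bounded in $L^\infty$ by Proposition \ref{H2 boundness} (hence $\phi^n\in L^\infty$), the assumption $\phi\in W^{1,\infty}$, and $\mathcal F\ge C_0$, which yields $\|\eta_2^n\|_{H^1}^2\lesssim \|e_\phi^n\|^2+\|\nabla e_\phi^n\|^2+\tau\int_{t_n}^{t_{n+1}}\|\phi_t\|_{H^1}^2$. The bound for $R_2^n$ then follows by pairing the same $H^1$-difference against $\phi(t_{n+1})-\phi(t_n)$ in $H^{-1}$: since $\|\phi(t_{n+1})-\phi(t_n)\|_{H^{-1}}^2\le\tau\int_{t_n}^{t_{n+1}}\|\phi_t\|_{H^{-1}}^2\le\tau^2\|\phi_t\|_{L^\infty(0,T;H^{-1})}^2$, the extra $\tau^2$ weight multiplies exactly the $\eta_2$-type bound, matching (\ref{R2}).

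The main obstacle will be the $H^1$-Lipschitz estimate for $g(\phi)=f(\phi)/\sqrt{\mathcal F[\phi]}$ underlying $\eta_2^n$ and $R_2^n$. The difficulty is twofold. First, $1/\sqrt{\mathcal F[\phi]}$ is a \emph{nonlocal} nonlinear functional of $\phi$, so its increment must be tracked separately through $\mathcal F\ge C_0$ and the elementary estimate $|\mathcal F[\phi^n]-\mathcal F[\phi(t_n)]|\lesssim\|e_\phi^n\|$. Second, measuring in $H^1$ rather than $L^2$ forces control of spatial gradients, which brings in $f''(\phi)\nabla\phi$-type terms; these are tamed only once the uniform $L^\infty$-bounds on $\phi^n$ (from Proposition \ref{H2 boundness}, itself secured by the subcritical growth (\ref{f condition})) and on $\nabla\phi$ (from $\phi\in W^{1,\infty}$) are in hand. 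With these ingredients assembled, the remaining manipulations reduce to routine applications of the mean value theorem, Cauchy--Schwarz, and the triangle inequality.
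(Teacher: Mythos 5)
Your proposal is correct and follows essentially the same route as the paper: the same $H^{-1}$--$H^{1}$ duality pairings for $\eta_{1}^{n},R_{1}^{n},R_{3}^{n}$, the same insertion of the intermediate value $f(\phi(t_{n}))/\sqrt{\mathcal{F}[\phi(t_{n})]}$ to split $\eta_{2}^{n}$ and $R_{2}^{n}$ into a numerical-versus-exact piece controlled by $e_{\phi}^{n}$ and an exact-versus-exact piece controlled by $\int_{t_{n}}^{t_{n+1}}\|\phi_{t}\|_{H^{1}}^{2}$, and the same reliance on the uniform $L^{\infty}$ bound from Proposition \ref{H2 boundness} together with $\mathcal{F}\geq C_{0}$. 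The only cosmetic difference is that you obtain the $H^{1}$-Lipschitz estimate for $f(\phi)/\sqrt{\mathcal{F}[\phi]}$ by differentiating along a segment, whereas the paper splits the difference algebraically into a numerator part $B_{1}$ and a nonlocal denominator part $B_{2}$; the two computations are equivalent.
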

\begin{proof}
    From Proposition \ref{H2 boundness} and the embedding $H^{2}\subseteq L^{\infty}$ for $d=1,2,3$, we find 
    \begin{equation}\label{inf norm}
        \|\phi^{n}\|_{L^\infty}\leq C\left( d, T, M, C_{0},\phi^0 \right)=:C_1,\quad 0\leq n\leq m.
    \end{equation}
    Together with  (\ref{Assumption of exact solution 0}) and the smoothness of $f$, we have 
    \begin{align}
        |f(\phi)|,\; |f'(\phi)|,\; |f''(\phi)|,\; |f(\phi^{n})|,\; |f'(\phi^{n})|,\; |f''(\phi^{n})|\leq C, \quad {\rm for\;all} \quad 0\leq n\leq m. \label{f bound}
    \end{align}
    with some $C>0$ uniform in $m,n,\tau,S$. For the $R_{1}^{n}$, by H\"older's inequality, we can obtain
    \begin{align}
        \left| R_{1}^{n} \right|^2 \leq \tau^2\left(\int_{t_{n}}^{t_{n+1}}{ |r_{tt}(s)| }\,{\rm d}{s} \right)^2 \leq \tau^3\int_{t_{n}}^{t_{n+1}}{ \left| r_{tt}(s) \right|^2 }\,{\rm d}{s}. \label{R1-0}
    \end{align}
    Calculating directly, we see
    \begin{align*}
        r_{tt} = -\frac{1}{4}\left( \mathcal{F}[\phi] \right)^{-\frac{3}{2}}\left( \int_{\Omega}{ f(\phi)\phi_{t} }\,{\rm d}{\bf x} \right)^2 + \frac{1}{2}\left( \mathcal{F}[\phi] \right)^{-\frac{1}{2}}\int_{\Omega}{ \left( f'(\phi)\phi_{t}^{2} + f(\phi)\phi_{tt} \right) }\,{\rm d}{\bf x}.
    \end{align*}
    With (\ref{Assumption of exact solution 0}) and (\ref{f bound}), we can have
    \begin{align*}
        \int_{t_{n}}^{t_{n+1}}{ |r_{tt}(s)|^2 }\,{\rm d}{s} &\leq C(C_{0},\phi,f)\int_{t_{n}}^{t_{n+1}}{ \left( \left\| \phi_{t} \right\|_{L^{4}}^4 + \left\| f'(\phi)\nabla\phi \right\|^2_{L^{\infty}}\left\| \phi_{tt} \right\|^2_{H^{-1}} \right) }\,{\rm d}{s} \nonumber\\
        &\leq C(C_{0},\phi,f)\left\| \nabla\phi \right\|_{L^{\infty}(0,T;L^{\infty})}^2\int_{t_{n}}^{t_{n+1}}{ \left( \left\| \phi_{t} \right\|_{H^{1}}^2  + \left\| \phi_{tt} \right\|_{H^{-1}}^2 \right) }\,{\rm d}{s},
    \end{align*}
    which combining with (\ref{R1-0}) lead to   (\ref{R1}). 
    
    For the $R_{2}^{n}$, firstly we write
    \begin{align*}
        B:=\frac{f(\phi^{n})}{\sqrt{\mathcal{F}[\phi^{n}]}} - \frac{f(\phi(t_{n+1}))}{\sqrt{\mathcal{F}[\phi(t_{n+1})]}} =& \frac{f(\phi^{n})}{\sqrt{\mathcal{F}[\phi^{n}]}} - \frac{f(\phi(t_{n}))}{\sqrt{\mathcal{F}[\phi(t_{n})]}} + \frac{f(\phi(t_{n}))}{\sqrt{\mathcal{F}[\phi(t_{n})]}} - \frac{f(\phi(t_{n+1}))}{\sqrt{\mathcal{F}[\phi(t_{n+1})]}} \\
        =&B_1+B_2+\frac{f(\phi(t_{n}))}{\sqrt{\mathcal{F}[\phi(t_{n})]}} - \frac{f(\phi(t_{n+1}))}{\sqrt{\mathcal{F}[\phi(t_{n+1})]}},
    \end{align*}
    with 
    \begin{align*}
        B_1= \frac{f(\phi^{n})-f(\phi(t_{n}))}{\sqrt{\mathcal{F}[\phi^{n}]}},\quad B_2= \frac{ f(\phi(t_{n}))\left( \mathcal{F}[\phi(t_{n})] - \mathcal{F}[\phi^{n}] \right) }{ \sqrt{\mathcal{F}[\phi^{n}]\mathcal{F}[\phi(t_{n})]}\left( \sqrt{\mathcal{F}[\phi(t_{n})]} + \sqrt{\mathcal{F}[\phi^{n}]} \right) }.
    \end{align*}
    Then, we can derive that
    \begin{align*}
        &\left\| B_{1} \right\| + \left\| B_{2} \right\| \leq C(d,T,M,C_{0},\phi^{0},\phi,f)\left\| e_{\phi}^{n} \right\|, \\
        &\left\| \nabla B_{1} \right\| \leq C(C_{0})\left\| \nabla f(\phi^{n})- \nabla f(\phi(t_{n})) \right\| \leq C(C_{0})\left\| \left( f'(\phi^{n})-f'(\phi(t_{n})) \right)\nabla \phi(t_{n}) \right\| + C(C_{0})\left\| f'(\phi^{n})\nabla e_{\phi}^{n} \right\| \nonumber\\
        &\hspace{1.15cm} \leq C(d,T,M,C_{0},\phi^{0},\phi,f)\left( \left\| \nabla\phi(t_{n}) e_{\phi}^{n} \right\| + \left\| \nabla e_{\phi}^{n} \right\| \right) \nonumber\\
        &\hspace{1.15cm} \leq C(d,T,M,C_{0},\phi^{0},\phi,f)\left( \left\| \nabla\phi(t_{n}) \right\|_{L^{3}}\left\| e_{\phi}^{n} \right\|_{L^{6}} + \left\| \nabla e_{\phi}^{n} \right\| \right) \nonumber\\
        &\hspace{1.15cm} \leq C(d,T,M,C_{0},\phi^{0},\phi,f)\left( \left\| \phi(t_{n}) \right\|_{W^{1,\infty}} \left\| e_{\phi}^{n} \right\|_{H^1} + \left\| \nabla e_{\phi}^{n} \right\| \right)  \nonumber\\
        &\hspace{1.15cm} \leq C(d,T,M,C_{0},\phi^{0},\phi,f)\left( \left\| e_{\phi}^{n} \right\| + \left\| \nabla e_{\phi}^{n} \right\| \right), \nonumber\\
        &\left\|\nabla B_{2}\right\| \leq C(d,T,M,C_{0},\phi^{0},\phi,f)\left\| \nabla f(\phi(t_{n})) \right\|\left\| e_{\phi}^{n} \right\|  \leq C(d,T,M,C_{0},\phi^{0},\phi,f)\left\| \nabla\phi(t_{n}) \right\|\left\| e_{\phi}^{n} \right\| \nonumber\\
        &\hspace{1.15cm} \leq C(d,T,M,C_{0},\phi^{0},\phi,f)\left\| e_{\phi}^{n} \right\|,
    \end{align*}
    which immediately give that
    \begin{align}
        \left\| \frac{f(\phi^{n})}{\sqrt{\mathcal{F}[\phi^{n}]}} - \frac{f(\phi(t_{n}))}{\sqrt{\mathcal{F}[\phi(t_{n})]}} \right\|_{H^{1}}^{2} \leq C(d,T,M,C_{0},\phi^{0},\phi,f)\left( \left\| e_{\phi}^{n} \right\|^{2} + \left\| \nabla e_{\phi}^{n} \right\|^{2} \right). \label{estimate f-f}
    \end{align}
    Similarly, we can derive that
    \begin{align}
        \left\| \frac{f(\phi(t_{n}))}{\sqrt{\mathcal{F}[\phi(t_{n})]}} - \frac{f(\phi(t_{n+1}))}{\sqrt{\mathcal{F}[\phi(t_{n+1})]}} \right\|_{H^1}^2 \leq& C(C_{0},\phi,f)\left(  \left\| \phi(t_{n}) - \phi(t_{n+1}) \right\|^2 + \left\| \nabla\left( \phi(t_{n}) - \phi(t_{n+1}) \right) \right\|^2  \right) \nonumber\\
        \leq& \tau C(C_{0},\phi,f)\int_{t_{n}}^{t_{n+1}}{ \left\| \phi_{t}(s) \right\|_{H^{1}}^2 }\,{\rm d}{s}. \label{estimate f-f 1}
    \end{align}
    From (\ref{estimate f-f}) and (\ref{estimate f-f 1}), we obtain 
    \begin{align*}
        \left\| B \right\|_{H^{1}}^2 \leq C(d,T,M,C_{0},\phi^{0},\phi,f)\left( \left\| e_{\phi}^{n} \right\|^2 + \left\| \nabla e_{\phi}^{n} \right\|^2 \right) + \tau C(C_{0},\phi,f)\int_{t_{n}}^{t_{n+1}}{ \left\| \phi_{t}(s) \right\|_{H^{1}}^2 }\,{\rm d}{s}.
    \end{align*}
    By substituting it into the definition of $R_{2}^{n}$, we find 
    \begin{align}
        \left| R_{2}^{n} \right|^2 \leq& \frac{1}{2}\left| \int_{\Omega}{ B\left( \phi(t_{n+1}) - \phi(t_{n}) \right) }\,{\rm d}{\bf x} \right|^2 \leq \tau^2C\left\| \phi_{t} \right\|_{L^{\infty}(0,T;H^{-1})}^2\left\| B \right\|_{H^{1}}^2 \nonumber\\
        \leq& \tau^2 C(d,T,M,C_{0},\phi^{0},\phi,f)\left\| \phi_{t} \right\|_{L^{\infty}(0,T;H^{-1})}^2\left( \left\| e_{\phi}^{n} \right\|^{2} + \left\| \nabla e_{\phi}^{n} \right\|^{2} \right) \nonumber\\
        & + \tau^3 C(C_{0},\phi,f)\left\| \phi_{t} \right\|_{L^{\infty}(0,T;H^{-1})}^2\int_{t_{n}}^{t_{n+1}}{ \left\| \phi_{t}(s) \right\|_{H^{1}}^2 }\,{\rm d}{s}.
    \end{align}
    
    For the $R_{3}^{n}$, we have 
    \begin{align*}
        \left| R_{3}^{n} \right|^2 \leq& \tau^2C(C_{0})\left( \int_{\Omega}{ \int_{t_{n}}^{t_{n+1}}{ |f(\phi(t_{n+1}))\phi_{tt}(s) |}\,{\rm d}{s} }\,{\rm d}{\bf x} \right)^2 \leq \tau^2C(C_{0})\left| \int_{t_{n}}^{t_{n+1}}{ \left\| f(\phi(t_{n+1})) \right\|_{H^{1}}\left\| \phi_{tt}(s) \right\|_{H^{-1}} }\,{\rm d}{s} \right|^2 \nonumber\\
        \leq& \tau^3C(C_{0})\left\| f(\phi(t_{n+1})) \right\|_{H^1}^2 \int_{t_{n}}^{t_{n+1}}{ \left\| \phi_{tt}(s) \right\|_{H^{-1}}^2 }\,{\rm d}{s} \nonumber\\
        \leq& \tau^3C(C_{0},\phi,f)\left\| \phi \right\|_{L^{\infty}(0,T;H^{1})}^2\int_{t_{n}}^{t_{n+1}}{ \left\| \phi_{tt}(s) \right\|_{H^{-1}}^2 }\,{\rm d}{s}.
    \end{align*}
    
    For the $\eta_{1}^{n}$ and $\eta_{3}^{n}$, we have
    \begin{align*}
        &\left\| (-\Delta)^{-\frac{1}{2}}\eta_{1}^{n} \right\|^2 \leq C \left\| \int_{t_{n}}^{t_{n+1}}{ \left|(-\Delta)^{-\frac{1}{2}}\phi_{tt}(s)\right| }\,{\rm d}{s} \right\|^2\leq \tau C\int_{t_{n}}^{t_{n+1}}{ \| \phi_{tt}(s) \|_{H^{-1}}^2 }\,{\rm d}{s},  \\
        &\| \eta_{3}^{n} \|_{H^{1}}^2 = \left\| \int_{t_{n}}^{t_{n+1}}{ \phi_{t}(s) }\,{\rm d}{s} \right\|_{H^{1}}^2 \leq \tau C\int_{t_{n}}^{t_{n+1}}{ \| \phi_{t}(s) \|_{H^{1}}^2 }\,{\rm d}{s}.
    \end{align*}
    
    For the $\eta_{2}^{n}$, we write it as 
    \begin{align*}
        \eta_{2}^{n} = r(t_{n+1})\left( \frac{f(\phi^{n})}{\sqrt{\mathcal{F}[\phi^{n}]}} - \frac{f(\phi(t_{n}))}{\sqrt{\mathcal{F}[\phi(t_{n})]}} \right) + r(t_{n+1})\left( \frac{f(\phi(t_{n}))}{\sqrt{\mathcal{F}[\phi(t_{n})]}}-\frac{f(\phi(t_{n+1}))}{\sqrt{\mathcal{F}[\phi(t_{n+1})]}} \right),
    \end{align*}
    and from   (\ref{estimate f-f}) and (\ref{estimate f-f 1}), we can obtain
    \begin{align*}
        \left\| \eta_{2}^{n} \right\|_{H^{1}}^2 \leq C(d,T,M,C_{0},\phi^{0},\phi,f)\left( \left\| e_{\phi}^{n} \right\|^{2} + \left\| \nabla e_{\phi}^{n} \right\|^{2} \right) + \tau C(C_{0},\phi,f)\int_{t_{n}}^{t_{n+1}}{ \left\| \phi_{t}(s) \right\|_{H^{1}}^2 }\,{\rm d}{s}.
    \end{align*}
    All the estimations in (\ref{estimate truncation error}) are now complete.
\end{proof}

Now with the boundedness results and the local error estimates, we are ready to prove Theorem \ref{Theo eroor estimate}.

\emph{Proof of Theorem \ref{Theo eroor estimate}.}
We proceed by using a mathematical induction on the bound  
\begin{equation}\label{induction}
\left\| \phi^{m} \right\|_{H^{1}}^2 \leq M+1,\quad 0\leq m\leq T/\tau,
\end{equation} which clearly holds for $m=0$.

{\bf Step 1.} Assuming that $\left\| \phi^{n} \right\|_{H^{1}}^2\leq M+1$ for all $0\leq n\leq m$ with some $0\leq m<T/\tau$, we need to show that it holds for $n=m+1$. Firstly, taking the $L^2$ inner product of (\ref{Error equation 1}) with $\tau e_{\phi}^{n+1}$, we have
\begin{align}
    \frac{1}{2}\left( \left\| e_{\phi}^{n+1} \right\|^2 - \left\| e_{\phi}^{n} \right\|^2 + \left\| e_{\phi}^{n+1} - e_{\phi}^{n} \right\|^2 \right) = -\tau\left\langle \nabla e_{\mu}^{n+1}, \nabla e_{\phi}^{n+1} \right\rangle - \tau\left\langle \eta_{1}^{n}, e_{\phi}^{n+1} \right\rangle =: \sum_{i=1}^{2}A_{i}. \label{err esti 0}
\end{align}
Secondly, taking the $L^2$ inner products of (\ref{Error equation 1}) and (\ref{Error equation 2}) with $\tau e_{\mu}^{n+1}$ and $e_{\phi}^{n+1}-e_{\phi}^{n}$, respectively, and multiplying (\ref{Error equation 3}) by $2\tilde{q}^{n+1}$, similarly as the derivation of  (\ref{oes eq})-(\ref{20240111-03}), we can   obtain the following equality:
\begin{align}
    &\frac{1}{2}\left( \left\| \nabla e_{\phi}^{n+1} \right\|^2 - \left\| \nabla e_{\phi}^{n} \right\|^2 + \left\| \nabla( e_{\phi}^{n+1}-e_{\phi}^{n} ) \right\|^2 \right) + S\left\| e_{\phi}^{n+1}-e_{\phi}^{n} \right\|^2 + \tau\left\| \nabla e_{\mu}^{n+1} \right\|^2 + 2\tilde{q}^{n+1}\left( \tilde{q}^{n+1}-q^{n} \right) \nonumber\\
    =& -2\tilde{q}^{n+1}R_{1}^{n} + 2\tilde{q}^{n+1}R_{2}^{n} + 2\tilde{q}^{n+1}R_{3}^{n} - \left\langle \eta_{2}^n, e_{\phi}^{n+1}-e_{\phi}^{n} \right\rangle - S\left\langle \eta_{3}^n, e_{\phi}^{n+1}-e_{\phi}^{n} \right\rangle  + \tau\left\langle \eta_{1}^n, e_{\mu}^{n+1} \right\rangle \nonumber\\
    =:&\sum_{i=3}^{8}A_{i}.  \label{err esti 1}
\end{align}
If we set $\tilde{q}^{0}=0$,  we can write
\begin{subequations}\label{til q}\begin{align}
    2\tilde{q}^{n+1}\left( \tilde{q}^{n+1}-q^{n} \right) =& \left| \tilde{q}^{n+1} \right|^2 - \left| \tilde{q}^{n} \right|^2 + \left| \tilde{q}^{n+1} - \tilde{q}^{n} \right|^2, \quad n=0, \\
    2\tilde{q}^{n+1}\left( \tilde{q}^{n+1}-q^{n} \right) =& 2\tilde{q}^{n+1}\left( \tilde{q}^{n+1}-\tilde{q}^{n} \right) + 2\tilde{q}^{n+1}\left( \tilde{q}^{n}-q^{n} \right) \nonumber\\
    =& \left| \tilde{q}^{n+1} \right|^2 - \left| \tilde{q}^{n} \right|^2 + \left| \tilde{q}^{n+1} - \tilde{q}^{n} \right|^2 + 2\tilde{q}^{n+1}\left( \tilde{r}^{n} - r[\phi^{n}] \right), \quad n\geq 1.
\end{align}\end{subequations}
Then by combining (\ref{err esti 0}), (\ref{err esti 1}),   (\ref{til q}) and defining $A_{9}:=2\tilde{q}^{n+1}\left( \tilde{r}^{n} - r[\phi^{n}] \right)$, we   obtain
\begin{align}
    \frac{1}{2}\left( \left\| e_{\phi}^{n+1} \right\|_{H^{1}}^2 - \left\| e_{\phi}^{n} \right\|_{H^{1}}^2 \right) + \tau\left\| \nabla e_{\mu}^{n+1} \right\|^2 + \left| \tilde{q}^{n+1} \right|^2 - \left| \tilde{q}^{n} \right|^2 \leq \sum_{i=1}^{9}\left| A_{i} \right|. \label{err esti 2}
\end{align}
Using (\ref{estimate truncation error}) and   (\ref{Assumption of exact solution 0}) under the condition $0\leq\tau\leq \tau_{1}$ (the $\tau_1$ from Proposition \ref{H2 boundness}), we have the following estimates for $\left| A_{1} \right|$ to $\left| A_{8} \right|$:
\begin{subequations} \label{estimate A 1 to 9} \begin{align}
    \left| A_{1} \right| =& \tau\left| \left\langle \nabla e_{\mu}^{n+1}, \nabla e_{\phi}^{n+1} \right\rangle \right| \leq {\frac{\tau}{5}}\left\| \nabla e_{\mu}^{n+1} \right\|^2 + \tau C\left\| \nabla e_{\phi}^{n+1} \right\|^2, \\
    \left| A_{2} \right| =& \tau\left| \left\langle \eta_{1}^{n}, e_{\phi}^{n+1} \right\rangle \right| = \tau\left| \left\langle (-\Delta)^{-\frac{1}{2}}\eta_{1}^{n}, \nabla e_{\phi}^{n+1} \right\rangle \right| \leq \tau C\left( \left\| (-\Delta)^{-\frac{1}{2}}\eta_{1}^{n} \right\|^2 + \left\| \nabla e_{\phi}^{n+1} \right\|^2 \right) \nonumber\\
    \leq& \tau C\left\| \nabla e_{\phi}^{n+1} \right\|^2 + \tau^3 C, \\
    \left| A_{3} \right| =& \left| 2\tilde{q}^{n+1}R_{1}^{n} \right| \leq \tau C\left( \tilde{q}^{n+1} \right)^2 + \frac{C}{\tau}\left| R_{1}^{n} \right|^2 \leq \tau C\left( \tilde{q}^{n+1} \right)^2 + \tau^3 C, \\
    \left| A_{4} \right| =& \left| 2\tilde{q}^{n+1}R_{2}^{n} \right| \leq \tau C\left( \tilde{q}^{n+1} \right)^2 + \frac{C}{\tau}\left| R_{2}^{n} \right|^2 \leq \tau C\left( \tilde{q}^{n+1} \right)^2 + \tau C\left( \left\| e_{\phi}^{n} \right\|^2 + \left\| \nabla e_{\phi}^{n} \right\|^2 \right) + \tau^3 C, \\
    \left| A_{5} \right| =& \left| 2\tilde{q}^{n+1}R_{3}^{n} \right| \leq \tau C\left( \tilde{q}^{n+1} \right)^2 + \frac{C}{\tau}\left| R_{3}^{n} \right|^2 \leq \tau C\left( \tilde{q}^{n+1} \right)^2 + \tau^3 C, \\
    \left| A_{6} \right| =& \left| \left\langle \eta_{2}^{n}, e_{\phi}^{n+1} - e_{\phi}^{n} \right\rangle \right| \leq \tau\left| \left\langle \eta_{2}^{n}, \Delta e_{\mu}^{n+1} - \eta_{1}^{n} \right\rangle \right| \leq \tau\left| \left\langle \nabla\eta_{2}^{n}, \nabla e_{\mu}^{n+1} \right\rangle \right| + \tau\left| \left\langle \eta_{2}^{n}, \eta_{1}^{n} \right\rangle \right| \nonumber\\
    \leq& { \frac{\tau}{5}}\left\| \nabla e_{\mu}^{n+1} \right\|^2 + \tau C\left\| \nabla\eta_{2}^{n} \right\|^2 + \tau C\int_{t_{n}}^{t_{n+1}}{ \left\| \phi_{tt}(s) \right\|_{H^{-1}}\left\| \eta_{2}^{n} \right\|_{H^{1}} }\,{\rm d}{s} \nonumber\\
    \leq& {\frac{\tau}{5}}\left\| \nabla e_{\mu}^{n+1} \right\|^2 + \tau C\left\| \eta_{2}^{n} \right\|_{H^{1}}^2 + \tau^2 C\int_{t_{n}}^{t_{n+1}}{ \left\| \phi_{tt}(s) \right\|_{H^{-1}}^2 }\,{\rm d}{s} \nonumber\\
    \leq& {\frac{\tau}{5}}\left\| \nabla e_{\mu}^{n+1} \right\|^2 + \tau C\left( \left\| e_{\phi}^{n} \right\|^2 + \left\| \nabla e_{\phi}^{n} \right\|^2 \right) + \tau^3 C, \\
    \left| A_{7} \right| =& S\left| \left\langle \eta_{3}^{n}, e_{\phi}^{n+1} - e_{\phi}^{n} \right\rangle \right| = S\tau\left| \left\langle \eta_{3}^{n}, \Delta e_{\mu}^{n+1} - \eta_{1}^{n} \right\rangle \right| \leq S\tau\left| \left\langle \nabla\eta_{3}^{n}, \nabla e_{\mu}^{n+1} \right\rangle \right| + S\tau\left| \left\langle \eta_{3}^{n}, \eta_{1}^{n} \right\rangle \right| \nonumber\\
    \leq& { \frac{\tau}{5}}\left\| \nabla e_{\mu}^{n+1} \right\|^2 + \tau S^2C\left\| \nabla\eta_{3}^{n} \right\|^2 + \tau SC \int_{t_{n}}^{t_{n+1}}{ \left\| \eta_{3}^{n} \right\|_{H^{1}}\left\| \phi_{tt}(s) \right\|_{H^{-1}} }\,{\rm d}{s}  \nonumber\\
    \leq& {\frac{\tau}{5}}\left\| \nabla e_{\mu}^{n+1} \right\|^2 + \tau S^2C\left\| \eta_{3}^{n} \right\|_{H^{1}}^2 + \tau^2 C\int_{t_{n}}^{t_{n+1}}{ \left\| \phi_{tt}(s) \right\|_{H^{-1}}^2 }\,{\rm d}{s} \leq {\frac{\tau}{5}}\left\| \nabla e_{\mu}^{n+1} \right\|^2 + \tau^3 (S^2+1)C, \\
    \left| A_{8} \right| =& \tau\left| \left\langle \eta_{1}^{n}, e_{\mu}^{n+1} \right\rangle \right| = \tau\left| \left\langle (-\Delta)^{-\frac{1}{2}}\eta_{1}^{n}, \nabla e_{\mu}^{n+1} \right\rangle \right| \leq {\frac{\tau}{5}}\left\| \nabla e_{\mu}^{n+1} \right\|^2 + \tau C\left\| (-\Delta)^{-\frac{1}{2}}\eta_{1}^{n} \right\|^2 \nonumber\\
    \leq& {\frac{\tau}{5}}\left\| \nabla e_{\mu}^{n+1} \right\|^2 + \tau^3 C.
\end{align}\end{subequations}

Next, we estimate $A_{9}$. According the definition of $r[\phi]$, it is essentially a functional of  $\phi$, which can be expanded into the following Taylor expansion:
\begin{align}
    r[\phi+\rho\psi] = & r[\phi] + \frac{{\rm d}}{{\rm d}\rho}r[\phi+\rho\psi]\Big|_{\rho=0}\cdot\rho  + \cdots + \frac{{\rm d}^k}{{\rm d}\rho^k}r[\phi+\rho\psi]\Big|_{\rho=0}\cdot\frac{\rho^k}{k!} + \frac{{\rm d}^{k+1}}{{\rm d}\rho^{k+1}}r[\phi+\rho\psi]\Big|_{\rho=\rho_{0}}\cdot\frac{\rho^{k+1}}{{(k+1)!}}, \label{Taylor expansion}
\end{align}
where $\rho\in \mathbb{R}$, $\rho_{0}\in(0,\rho)$, $\phi$ and $\psi$ are any functions of ${\bf x}$. We can calculate to get the first, the second and the third order functional derivatives of $r[\phi]$:
\begin{align*}
    &\frac{{\rm d}}{{\rm d}\rho}r[\phi+\rho \psi] = \frac{1}{2\sqrt{\mathcal{F}[\phi+\rho \psi]}}\int_{\Omega}{ f(\phi+\rho \psi) \psi }\,{\rm d}{\bf x}, \\
    &\frac{{\rm d}^2}{{\rm d}\rho^2}r[\phi+\rho \psi] = -\frac{1}{4}\big( \mathcal{F}[\phi+\rho \psi] \big)^{-\frac{3}{2}}\Big(\int_{\Omega}{ f(\phi+\rho \psi) \psi }\,{\rm d}{\bf x}\Big)^2  + \frac{1}{2}\big(\mathcal{F}[\phi+\rho \psi]\big)^{-\frac{1}{2}}\int_{\Omega}{ f'(\phi+\rho \psi) \psi^2 }\,{\rm d}{\bf x}, \\
    &\frac{{\rm d}^3}{{\rm d}\rho^3}r[\phi+\rho \psi] = \frac{3}{8}\big( \mathcal{F}[\phi+\rho \psi] \big)^{-\frac{5}{2}}\Big(\int_{\Omega}{ f(\phi+\rho \psi) \psi }\,{\rm d}{\bf x}\Big)^3 + \frac{1}{2}\big(\mathcal{F}[\phi+\rho \psi]\big)^{-\frac{1}{2}}\int_{\Omega}{ f''(\phi+\rho \psi) \psi^3 }\,{\rm d}{\bf x}  \\
    &\hspace{2.7cm} - \frac{3}{4}\big( \mathcal{F}[\phi+\rho \psi] \big)^{-\frac{3}{2}}\Big(\int_{\Omega}{ f(\phi+\rho \psi) \psi }\,{\rm d}{\bf x}\Big)\Big(\int_{\Omega}{ f'(\phi+\rho \psi) \psi^2 }\,{\rm d}{\bf x}\Big).
\end{align*}
By setting $\phi=\phi^{n-1}$, $\psi=\phi^{n} - \phi^{n-1}$, there are
\begin{align*}
    r[\phi^{n}] =& r[\phi^{n-1}+(\phi^{n}-\phi^{n-1})] \\
    =& r[\phi^{n-1}] + \frac{{\rm d}}{{\rm d}\rho} r[\phi^{n-1}+\rho (\phi^{n}-\phi^{n-1})]\Big|_{\rho=0} + \frac{1}{2}\frac{{\rm d}^2}{{\rm d}\rho^2} r[\phi^{n-1}+\rho (\phi^{n}-\phi^{n-1})]\Big|_{\rho=\rho_{0}} \\
    =& r[\phi^{n-1}] + \frac{1}{2\sqrt{\mathcal{F}[\phi^{n-1}]}}\int_{\Omega}f(\phi^{n-1})(\phi^{n}-\phi^{n-1})\,{\rm d}{\bf x} -\frac{1}{8}\left( \mathcal{F}[\xi_{0}^{n}] \right)^{-\frac{3}{2}}\left( \int_{\Omega}{ f(\xi_{0}^{n})(\phi^{n}-\phi^{n-1}) }\,{\rm d}{\bf x} \right)^2  \\
    & + \frac{1}{4}\left( \mathcal{F}[\xi_{0}^{n}] \right)^{-\frac{1}{2}} \int_{\Omega}{ f'(\xi_{0}^{n})(\phi^{n}-\phi^{n-1})^2 }\,{\rm d}{\bf x},
\end{align*}
with $\xi_{0}^{n}:=\rho_{0} \phi^{n} + (1-\rho_{0})\phi^{n-1}$ for some $\rho_{0}\in(0,1)$. By moving $r[\phi^{n-1}]$ to the left-hand side and subtracting above equality from  (\ref{iSAV-BE scheme 3}) (with $n$ replaced by $n-1$), we obtain
\begin{align*}
    \tilde{r}^{n} - r[\phi^{n}] =& \frac{1}{8}\left( \mathcal{F}[\xi_{0}^{n}] \right)^{-\frac{3}{2}}\left( \int_{\Omega}{ f(\xi_{0}^{n})(\phi^{n}-\phi^{n-1}) }\,{\rm d}{\bf x} \right)^2 - \frac{1}{4}\left( \mathcal{F}[\xi_{0}^{n}] \right)^{-\frac{1}{2}} \int_{\Omega}{ f'(\xi_{0}^{n})(\phi^{n}-\phi^{n-1})^2 }\,{\rm d}{\bf x}.
\end{align*}
Combining (\ref{iSAV-BE scheme 1}) and (\ref{iSAV-BE scheme 2}), we can obtain 
\begin{align}
    \left| \tilde{r}^{n} - r[\phi^{n}] \right|^2 \leq& C\left\| \phi^{n} - \phi^{n-1} \right\|^4 \leq \tau^4 C \left\| \Delta \mu^{n} \right\|^4 \nonumber\\
    \leq& \tau^{4} C\left( S^{4}\left\| \Delta\phi^{n-1} \right\|^4 + S^{4}\left\| \Delta\phi^{n} \right\|^4 + \left\| \Delta^2\phi^{n} \right\|^4 + \left\| \Delta f(\phi^{n-1}) \right\|^4 \right) \nonumber\\
    \leq& \tau^4 C\left( 1 + S^{4}\left\| \Delta\phi^{n-1} \right\|^4 + S^{4}\left\| \Delta\phi^{n} \right\|^4 + \left\| \Delta^2\phi^{n} \right\|^4 + \left\| \Delta^2\phi^{n} \right\|^{4\sigma} \right) \leq \tau^4(S^4+1)C, \label{20240413}
\end{align}
by using the boundedness of $\left\| \Delta\phi^{n} \right\|$ and $\left\| \Delta^2\phi^{n} \right\|$ in Propositions \ref{H2 boundness} and \ref{H4 boundness}. Thus, we have
\begin{align}
    \left| A_{9} \right| = \left| 2\tilde{q}^{n+1}\left( \tilde{r}^{n}-r[\phi^{n}] \right) \right| \leq \tau C\left( \tilde{q}^{n+1} \right)^2 + \frac{C}{\tau}\left| \tilde{r}^{n}-r[\phi^{n}] \right|^2 \leq \tau C\left( \tilde{q}^{n+1} \right)^2 + \tau^3(S^4+1)C. \label{estimate A7}
\end{align}

Substituting (\ref{estimate A 1 to 9}) and (\ref{estimate A7}) into (\ref{err esti 2}), we find
\begin{align}
    &\frac{1}{2}\left( \left\| e_{\phi}^{n+1} \right\|_{H^{1}}^2 - \left\| e_{\phi}^{n} \right\|_{H^{1}}^2 \right)+ {\frac{\tau}{5}}\left\| \nabla e_{\mu}^{n+1} \right\|^2 + \left| \tilde{q}^{n+1} \right|^2 - \left| \tilde{q}^{n} \right|^2 \nonumber\\
    \leq& \tau C\left[ \left( \tilde{q}^{n+1} \right)^2 + \left\| e_{\phi}^{n} \right\|^2 + \left\| \nabla e_{\phi}^{n} \right\|^2  \right] + \tau^3 (S^4+1)C,\quad 0\leq n\leq m. \nonumber
\end{align}
All the constants $C$ appearing in (\ref{estimate A 1 to 9}), (\ref{20240413}) and (\ref{estimate A7})  depend only on  $d,\, T,\, C_{0}$ and the norms of $\phi^{0},  \phi, f$ on $\Omega$. 
Applying the discrete Gronwall inequality to the above, we can immediately derive  
$$ 
        \frac{1}{2}\left\| e_{\phi}^{n} \right\|_{H^{1}}^2 +  \left| \tilde{q}^{n} \right|^2 + \frac{\tau}{5}\sum_{k=1}^{n}\left\| \nabla e_{\mu}^{k} \right\|^2 \leq C_2\tau^2,\quad 0\leq n \leq m+1,
        $$
with a constant $C_2>0$ depends  on  $S, d,\, T,\, C_{0}$ and the norms of $\phi^{0},  \phi, f$ on $\Omega$. Thus, the error estimate result (\ref{Error estimation formula}) is deduced once the $H^1$ bound (\ref{induction}) holds, and so is the $L^\infty$ bound (\ref{inf norm}). 

{\bf Step 2.} By the triangle inequality and the deduced error bound above, we have
\begin{align*}
\left\| \phi^{m+1} \right\|_{H^{1}} \leq \left\| \phi\left( t_{m+1} \right) \right\|_{H^{1}} + \left\| e_{\phi}^{m+1} \right\|_{H^{1}} \leq \sqrt{M} + \tau \sqrt{2C_2}.
\end{align*}
Therefore, there exists some $\tau_{2}>0$ that depends only on $C_2,M$ such that when  $0<\tau\leq\tau_{2}$, we have $\left\| \phi^{m+1} \right\|_{H^{1}}^2\leq M+1$. 
The induction on (\ref{induction}) is then done, and the proof of the theorem is completed by setting $\tau_{0}=\min\left\{ \tau_{1},\tau_{2} \right\}$.

\qed 

\begin{remark}\label{rk tau0}  We remark that $\tau\leq\tau_0$ in Theorem \ref{Theo eroor estimate} is only a technical condition for rigorous analysis.  It is particular to provide the uniform $L^\infty$ bound and so to accomplish the original energy stability property. From the process of the proof, one may see that $\tau_0$ depends on $S$. The practical computations based on our experiments (Figure \ref{fig on tau0}) never feel the essential existence of such restriction on the time step.
\end{remark}

\section{Numerical experiments}\label{sec:result}
In this section, we apply the proposed iSAV schemes to solve some gradient flows. By numerical results, we   demonstrate the accuracy and improved stability of iSAV compared  with SAV. For comparison purpose, all the considered SAV schemes here are the original ones, i.e., (\ref{SAV-BE scheme}) and (\ref{SAV-BDF scheme}), which do \textbf{not} involve any stabilization techniques as mentioned in Remark \ref{stable tech}. In all of our examples, we impose the periodic boundary conditions for (\ref{gradient flow}) and utilize the Fourier pseudo-spectral method \cite{Shen2011} for spatial discretizations. 
For the BDF schemes that involve three-time levels, we use the iSAV-BE scheme to initiate for the first time level, which does not affect the overall second-order accuracy.

For simplicity, we only consider $\mathcal{L}=-\Delta$ and the $H^{-\alpha}$ flow for (\ref{gradient flow}) in two space dimensions, i.e., $d=2,\bx=(x,y)$, $\mathcal{G}=-\gamma(-\Delta)^{\alpha}$ with a parameter $\gamma>0$ controlling  the rate of energy dissipation \cite{Shen2019NewClass}. In the case of $\alpha=0$, the gradient flow is the standard Allen–Cahn equation. For $\alpha=1$, it becomes the standard Cahn-Hilliard equation.  For the nonlinearity/potential $F(\cdot)$, we will consider two typical kinds from applications for numerical experiments  in the following. 

\subsection{Double-well potential}
We first consider the double-well type potential in (\ref{gradient flow}), i.e.,
\begin{align*}
	F(\phi) := \frac{1}{4\varepsilon^2}(\phi^{2}-1)^{2},
\end{align*}  
which is the most classical case for (\ref{gradient flow}) in applications. 
\begin{example}[Accuracy of iSAV schemes] \label{Ex 1}
    We consider this example to test the accuracy of the proposed iSAV schemes, i.e., iSAV-BE (\ref{iSAV-BE scheme}) and iSAV-BDF (\ref{iSAV-BDF scheme}). Select the computational domain $\Omega=[0,2\pi]^{2}$ and the parameters
    $\varepsilon = 1, \gamma = 0.1 $ and $S = 6$. 
    The initial data for the flow (\ref{gradient flow}) is chosen as
    \begin{align*}
        \phi(x,y,0) = 1 + 0.5\,{\rm sin}(x)\,{\rm sin}(y). 
    \end{align*}
\end{example} 
We measure the error under $H^1$ norm (as analyzed) at $t=T=0.5$. The reference solution is computed using the SAV-BDF scheme (\ref{SAV-BDF scheme}). Tables \ref{tab ex1 space convergence} and \ref{tab ex1 time convergence} display the spatial and the temporal errors of iSAV schemes, respectively. From the numerical results, we can clearly observe that: 1) both iSAV schemes achieve the spectral accuracy in the spatial direction; 2) iSAV-BE and iSAV-BDF achieve the first order and the second order accuarcy in the temporal direction, respectively, which is their optimal convergence order. In particular, the result of iSAV-BE confirms Theorem \ref{Theo eroor estimate}.

\begin{table}[!ht]
    \centering
    \caption{(Spatial error) $\| e_{\phi}^{N} \|_{H^{1}}$ of iSAV schemes at $T=0.5$, with $\tau=10^{-5}$ fixed and different spatial grid.}
    \label{tab ex1 space convergence}
    \begin{tabular}{clcccccc}
        \hline
        \multicolumn{2}{c}{Spatial grid} & $4\times4$ & $8\times8$ & $12\times12$ & $16\times16$ & $20\times20$ \\
        \hline
        \multirow{2}{*}{iSAV-BE} & $\alpha = 0$ & 1.38E-03 & 1.14E-06 & 1.44E-08 & 8.78E-11 & 2.91E-13 \\
        & $\alpha = 1$ & 5.40E-03 & 9.58E-05 & 1.80E-07 & 1.13E-09 & 2.40E-12 \\
        \hline
        \multirow{2}{*}{iSAV-BDF} & $\alpha = 0$ & 1.38E-03 & 1.14E-06 & 1.44E-08 & 8.78E-11 & 3.08E-13 \\
        & $\alpha = 1$ & 5.40E-03 & 9.58E-05 & 1.80E-07 & 1.13E-09 & 2.40E-12 \\
        \hline
    \end{tabular}
\end{table}

\begin{table}[!ht]
    \centering
    \caption{ (Temporal error) $\| e_{\phi}^{N} \|_{H^{1}}$ of iSAV schemes at $T=0.5$, with spatial grid $240\times 240$ fixed and different $\tau=T/N$.} \label{tab ex1 time convergence}
    \begin{tabular}{clccccc}
        \hline
        \multicolumn{2}{c}{$N$}   & $10$  & $20$   & $40$  & $80$    & $160$     \\ \hline
        \multirow{4}{*}{iSAV-BE} & $\alpha = 0$   & 1.57E-02 & 7.96E-03 & 4.01E-03 & 2.01E-03 & 1.01E-03   \\
        & order & \multicolumn{1}{c}{--} & \multicolumn{1}{c}{0.98} & \multicolumn{1}{c}{0.99} & \multicolumn{1}{c}{0.99} & \multicolumn{1}{c}{1.00} \\ \cline{2-7} 
        & $\alpha = 1$   & 4.90E-02 & 2.52E-02 & 1.28E-02 & 6.42E-03 & 3.22E-03   \\
        & order & \multicolumn{1}{c}{--} & \multicolumn{1}{c}{0.96} & \multicolumn{1}{c}{0.98} & \multicolumn{1}{c}{0.99} & \multicolumn{1}{c}{0.99} \\ \hline
        \multirow{4}{*}{iSAV-BDF} & $\alpha = 0$   & 2.15E-03 & 5.33E-04 & 1.33E-04 & 3.31E-05 & 8.27E-06   \\
        & order & \multicolumn{1}{c}{--} & \multicolumn{1}{c}{2.01} & \multicolumn{1}{c}{2.01} & \multicolumn{1}{c}{2.00} & \multicolumn{1}{c}{2.00} \\ \cline{2-7} 
        & $\alpha = 1$   & 6.56E-03 & 1.59E-03 & 3.91E-04 & 9.71E-05 & 2.42E-05   \\
        & order & \multicolumn{1}{c}{--} & \multicolumn{1}{c}{2.05} & \multicolumn{1}{c}{2.02} & \multicolumn{1}{c}{2.01} & \multicolumn{1}{c}{2.00} \\ \hline
    \end{tabular}
\end{table}

\begin{figure}[!ht]
    \centering
    \includegraphics[width=5.2cm, height=5cm]{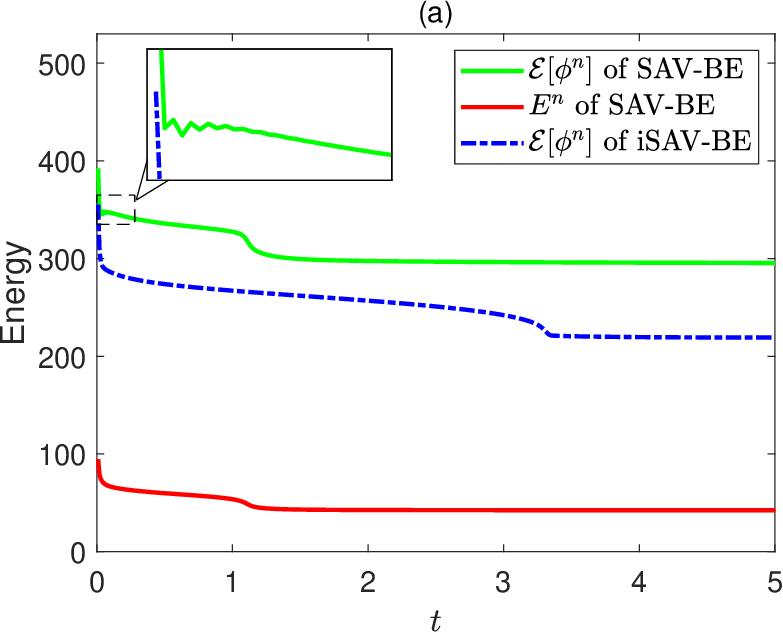}
    \includegraphics[width=5.2cm, height=5cm]{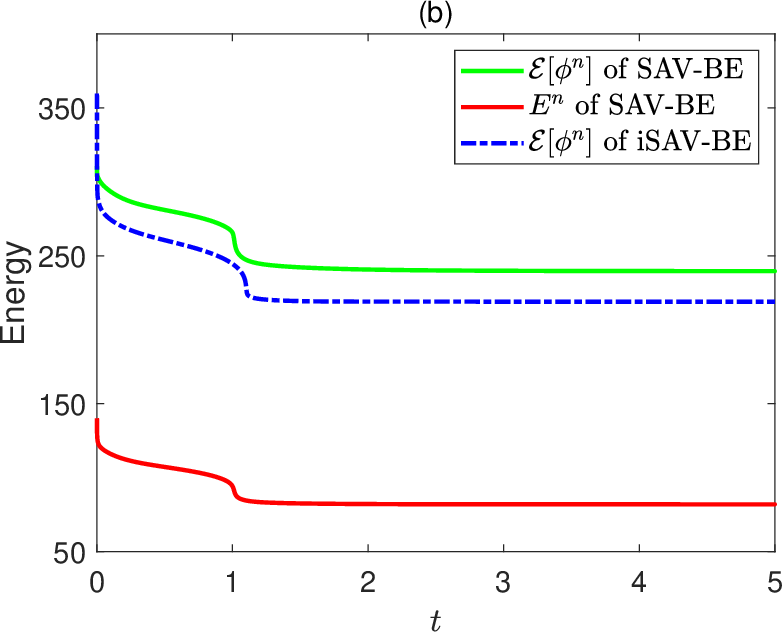}
    \includegraphics[width=5.2cm, height=5cm]{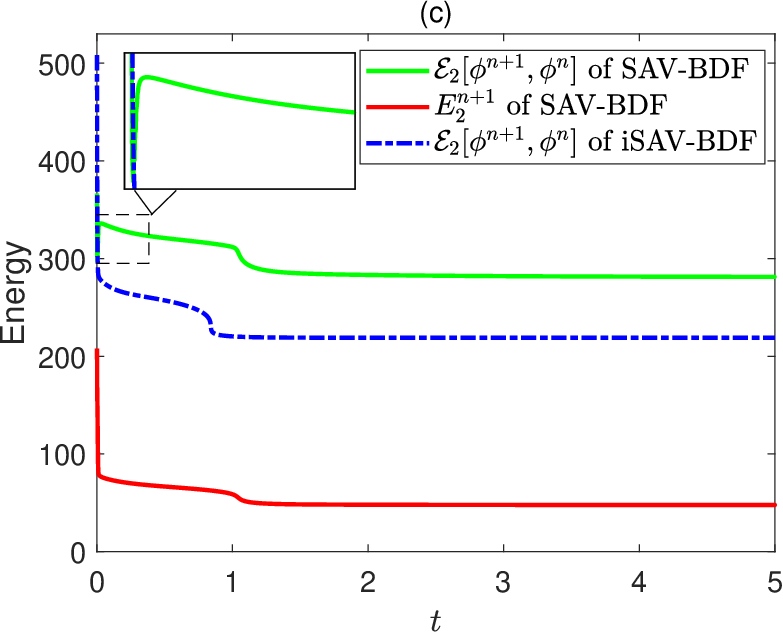}
    \caption{(Example \ref{Ex 2}): Time evolutions of the energy in SAV and iSAV. (a): BE schemes with $\tau = 0.01$; (b): BE schemes with $\tau = 0.001$; (c): BDF schemes with $\tau = 0.001$.} \label{fig ex2 energy} \vspace{3pt}
\end{figure}

\begin{figure}[!ht]
    \centering
    \includegraphics[width=5.2cm, height=5cm]{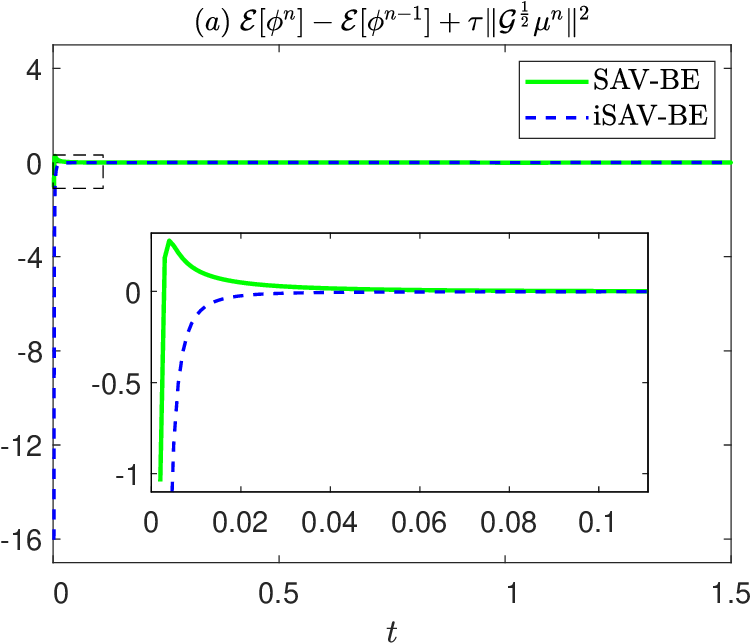}
    \includegraphics[width=5.2cm, height=5cm]{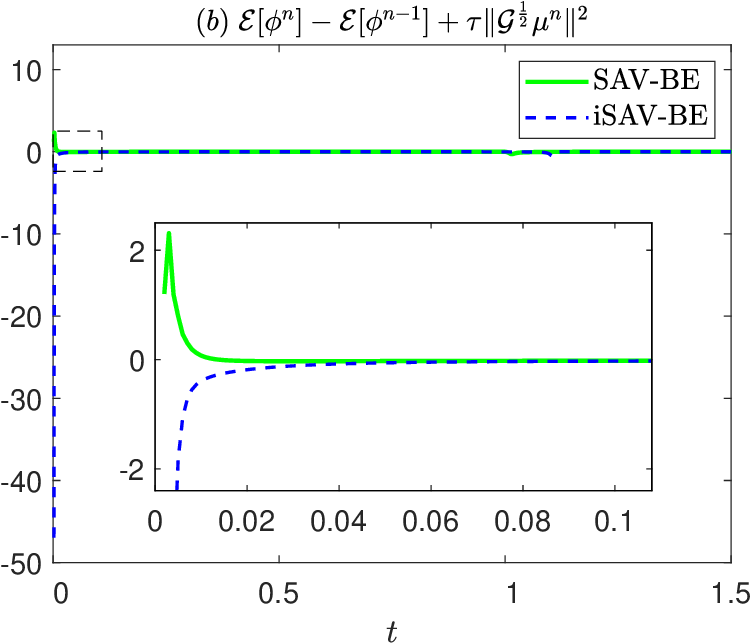}
    \includegraphics[width=5.2cm, height=5cm]{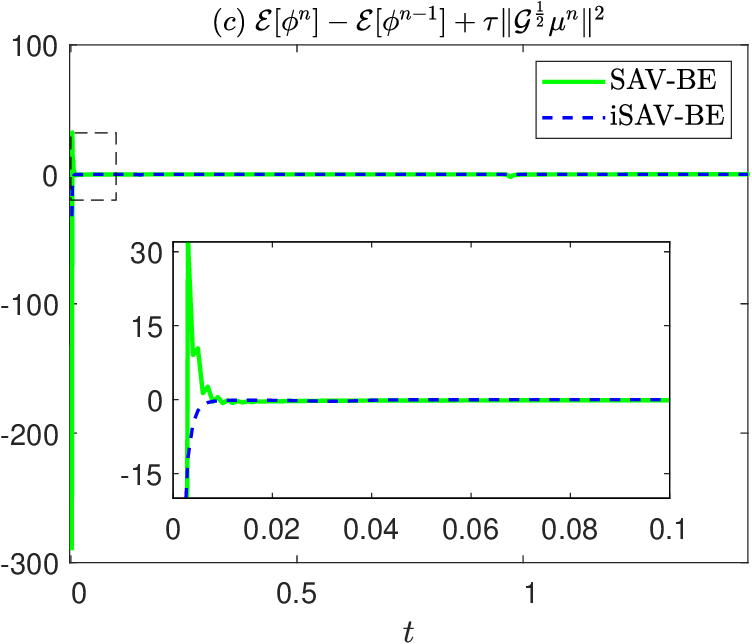}
    \caption{(Example \ref{Ex 2}): Time evolution of $\mathcal{E}[\phi^{n}]-\mathcal{E}[\phi^{n-1}]+\tau\| \mathcal{G}^{\frac{1}{2}}\mu^{n} \|^2$ in SAV-BE and iSAV-BE with $\tau = 0.001$. (a) $\varepsilon=0.1$; (b) $\varepsilon=0.04$; (c) $\varepsilon=0.01$.} \label{fig ex2 E-E BE} \vspace{3pt}
\end{figure}

\begin{figure}[!ht]
    \centering
    \includegraphics[width=5.2cm, height=5cm]{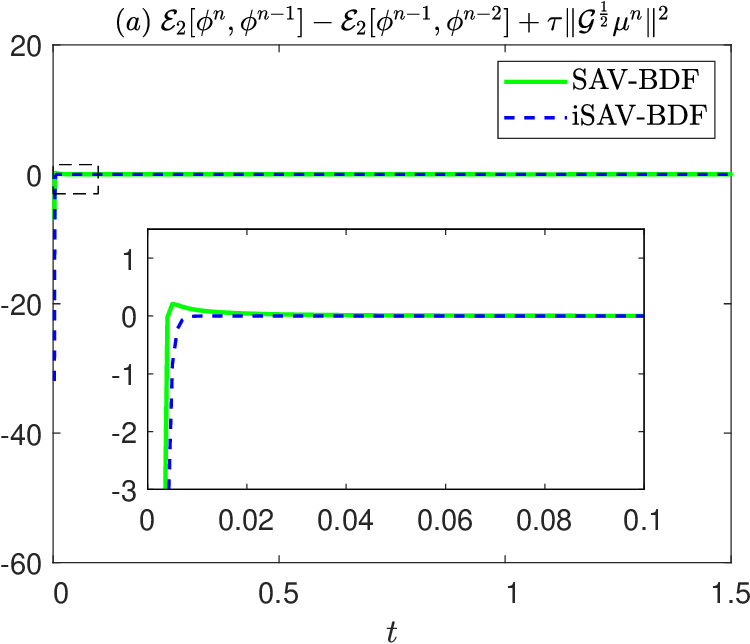}
    \includegraphics[width=5.2cm, height=5cm]{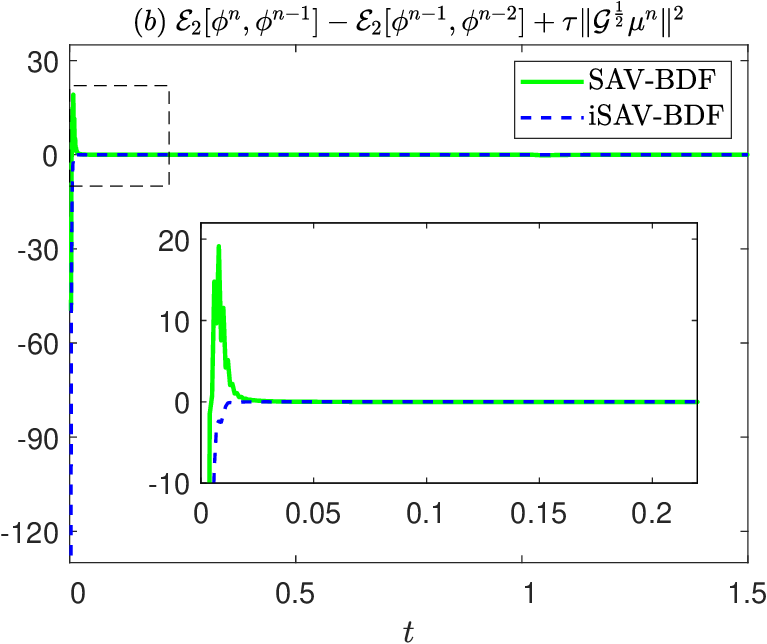}
    \includegraphics[width=5.2cm, height=5cm]{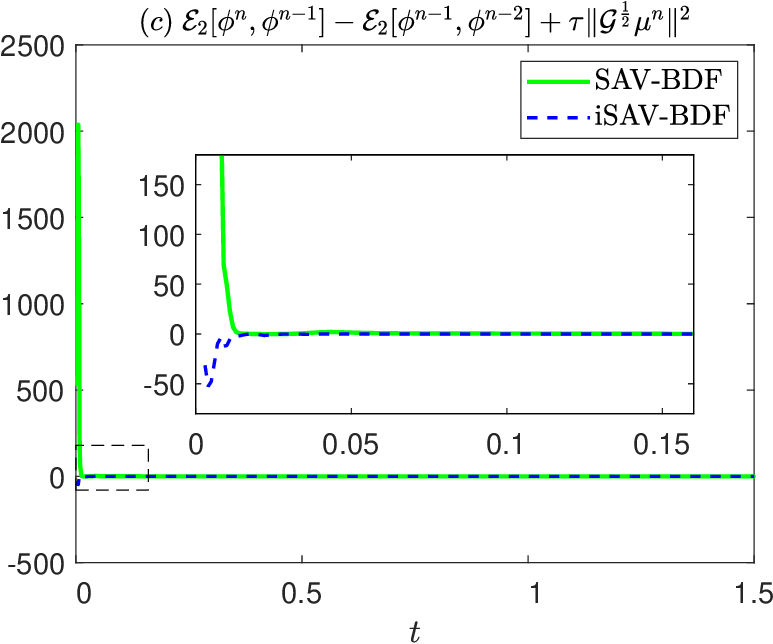}
    \caption{(Example \ref{Ex 2}): Time evolution of $\mathcal{E}_{2}[\phi^{n},\phi^{n-1}]-\mathcal{E}_{2}[\phi^{n-1},\phi^{n-2}]+\tau\| \mathcal{G}^{\frac{1}{2}}\mu^{n} \|^2$ of SAV-BDF and iSAV-BDF with $\tau = 0.001$. (a) $\varepsilon=0.1$; (b) $\varepsilon=0.04$; (c) $\varepsilon=0.01$.} \label{fig ex2 E-E BDF} \vspace{3pt}
\end{figure}

\begin{example}[Energy stability and comparison] \label{Ex 2}
    Now we take an example of the Cahn-Hilliard equation to test the energy stability of iSAV and make some comparisons with SAV. Set the computational domain $\Omega=[0,6.4]\times[0,6.4]$ with a $256\times 256$ spatial grid and the  parameters as
    $\gamma = 0.01, \, \alpha = 1.$
     The stabilizing parameter is chosen as $S = 3/\varepsilon^2$ for iSAV. The initial data of (\ref{gradient flow}) is given as 
    \begin{align*}
        \phi(x,y,0) = \left\{
        \begin{aligned}
            &\;1,\qquad (x,y)\in\Omega_{0}, \\
            &-1,\quad \, (x,y)\in\Omega/\Omega_{0}, 
        \end{aligned}
        \right.
    \end{align*}
    where $ \Omega_{0} = \big\{ (x,y)\in\Omega\, :\, (|x-3.2|\leq 1\cap |y-3.2|\leq 1) \cup (|x-5|\leq 0.36\cap |y-5|\leq 0.36) \big\} $ is a set of two squares in $\Omega$. Moreover, in order to maintain a positive lower bound for $r[\phi]$, an additional constant $ 1 $ has been added to the double-well   potential $F(\phi)$. We solve the problem by the presented SAV and iSAV schemes under the same step size.
\end{example}

 Firstly, we test the time  evolution of the `energy' under the numerical schemes, i.e., $\mathcal{E}[\phi^n],E^n$ for BE schemes and $\mathcal{E}_2[\phi^{n+1},\phi^n],E_2^{n+1}$ for BDF schemes. We fix $\varepsilon = 0.04$ here and take two different $\tau$. Figure \ref{fig ex2 energy} depicts the results for each scheme. We can 
observe that when time step of the SAV schemes is not small enough ((a)\&(c) in Figure \ref{fig ex2 energy}), the original energy ($\mathcal{E}$ or $\mathcal{E}_2$) is not monotonically decreasing where oscillations or increments could occur. In contrast, the original energy dissipation is strictly maintained by the iSAV schemes. 
This justifies Theorem \ref{Energy Law of iSAV-BE}.
Moreover, at the end of the evolution, the reached steady states by SAV possess higher energy level than iSAV.

\begin{figure}[!ht]
    \centering
    \includegraphics[width=7cm, height=5cm]{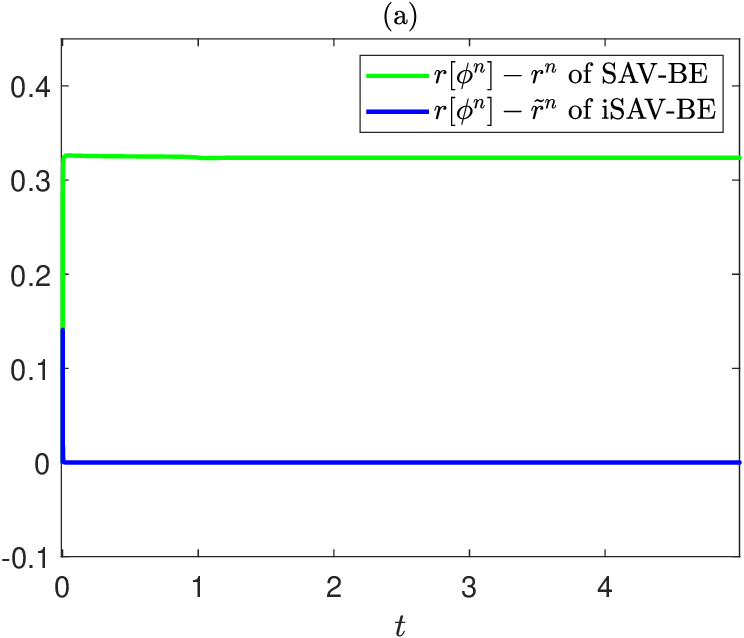}
    \includegraphics[width=7cm, height=5cm]{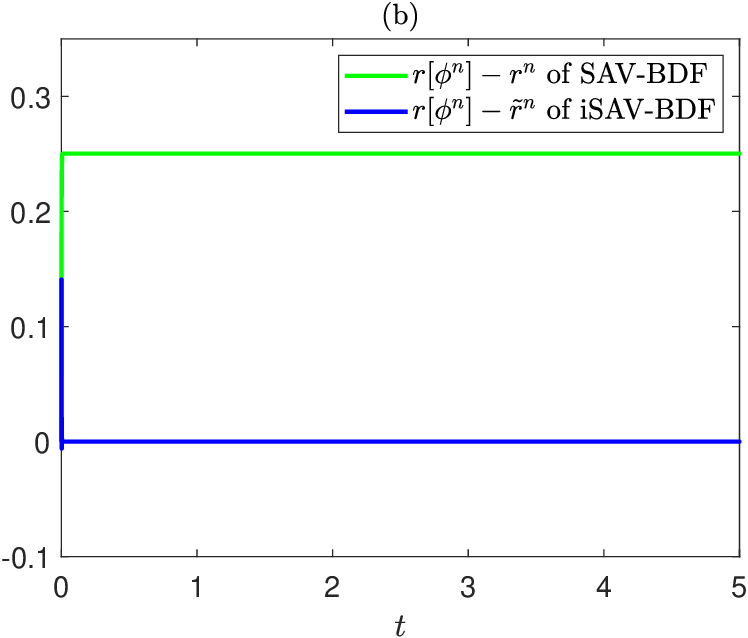}
    \caption{(Example \ref{Ex 2}): Difference between $r[\phi^{n}]$ and $r^{n}$ or $\tilde{r}^{n}$ in SAV or iSAV. (a): BE schemes with $\tau = 0.001$; (b): BDF schemes with $\tau = 0.001$.} \label{fig ex2 rn diff} \vspace{3pt}
\end{figure}
\begin{figure}[!ht]
    \centering
    \includegraphics[width=3.6cm, height=3.6cm]{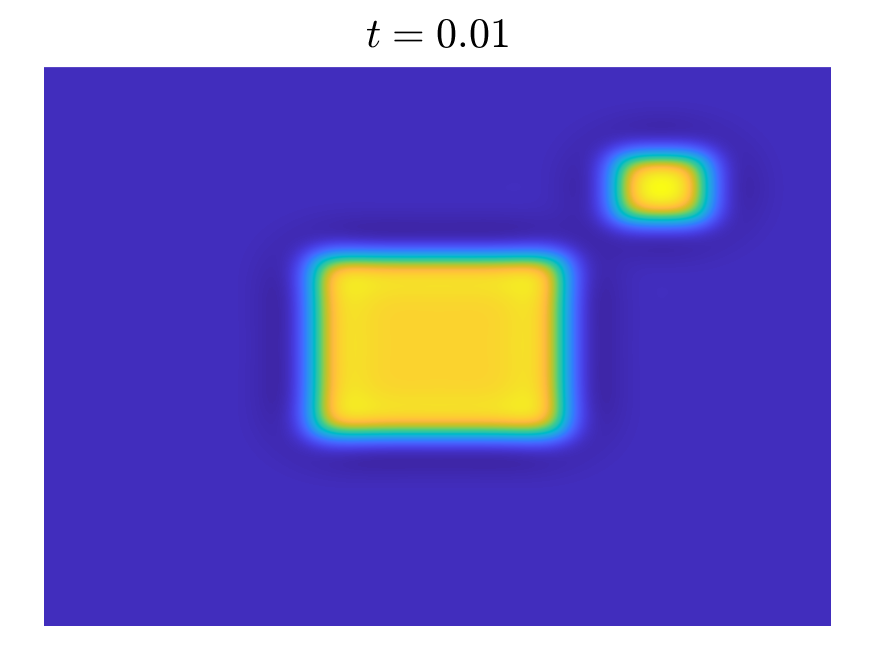}
    \includegraphics[width=3.6cm, height=3.6cm]{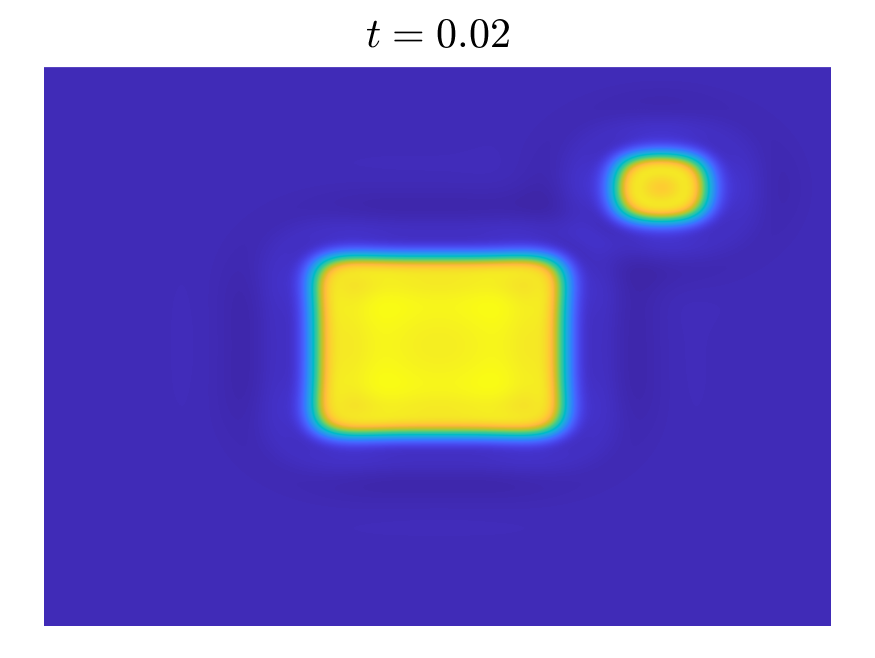}
    \includegraphics[width=3.6cm, height=3.6cm]{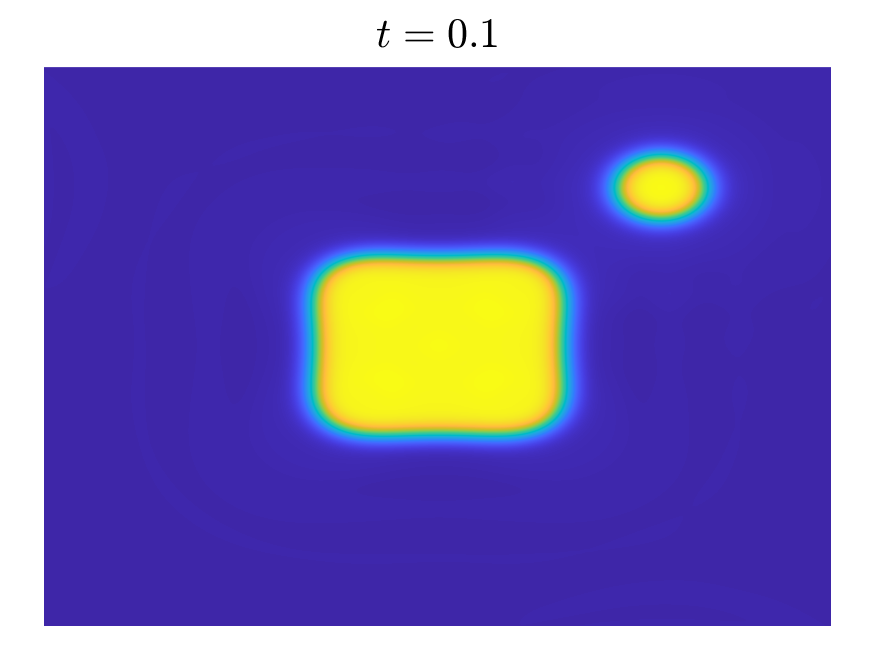}
    \includegraphics[width=3.6cm, height=3.6cm]{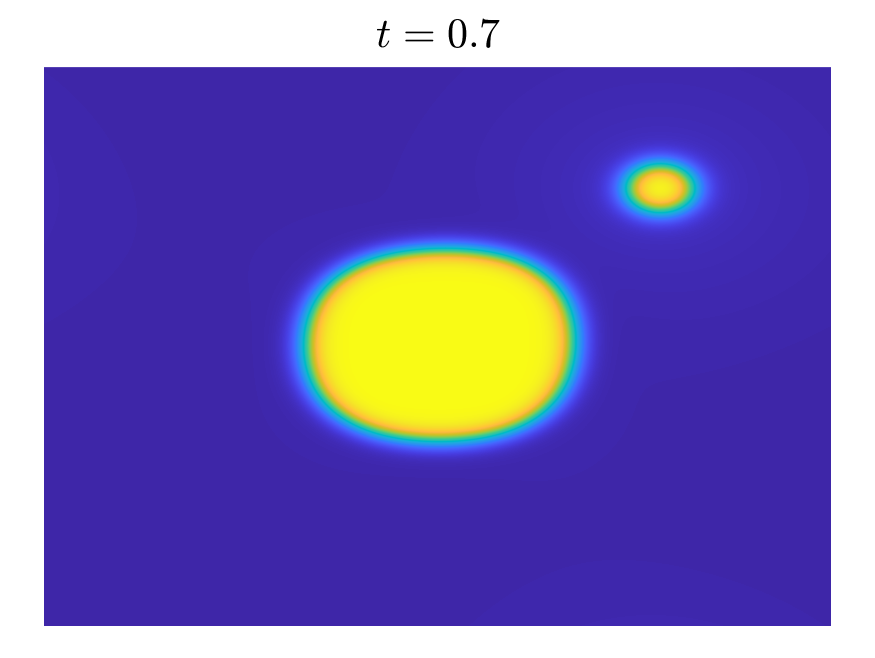} \\
    \includegraphics[width=3.6cm, height=3.6cm]{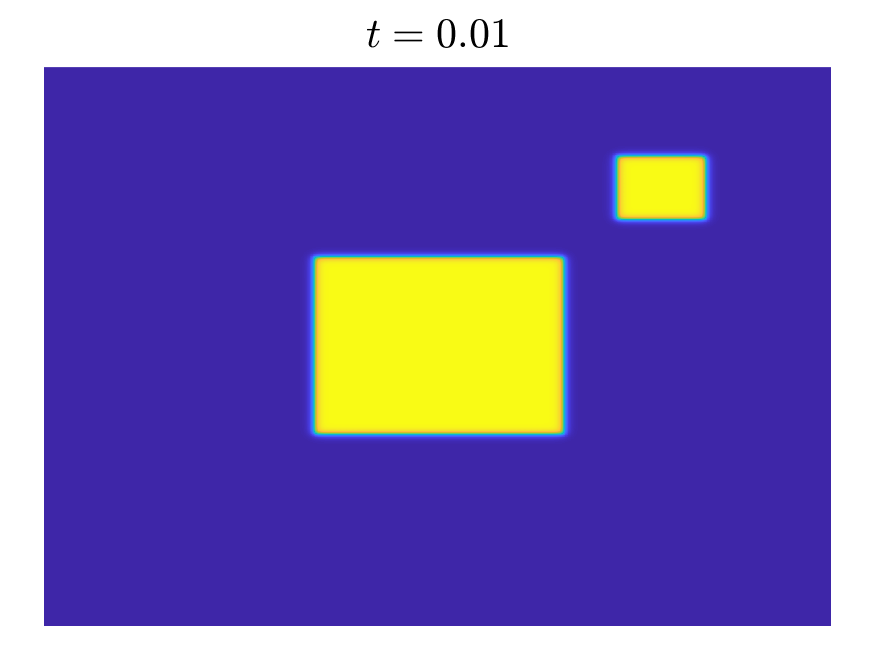}
    \includegraphics[width=3.6cm, height=3.6cm]{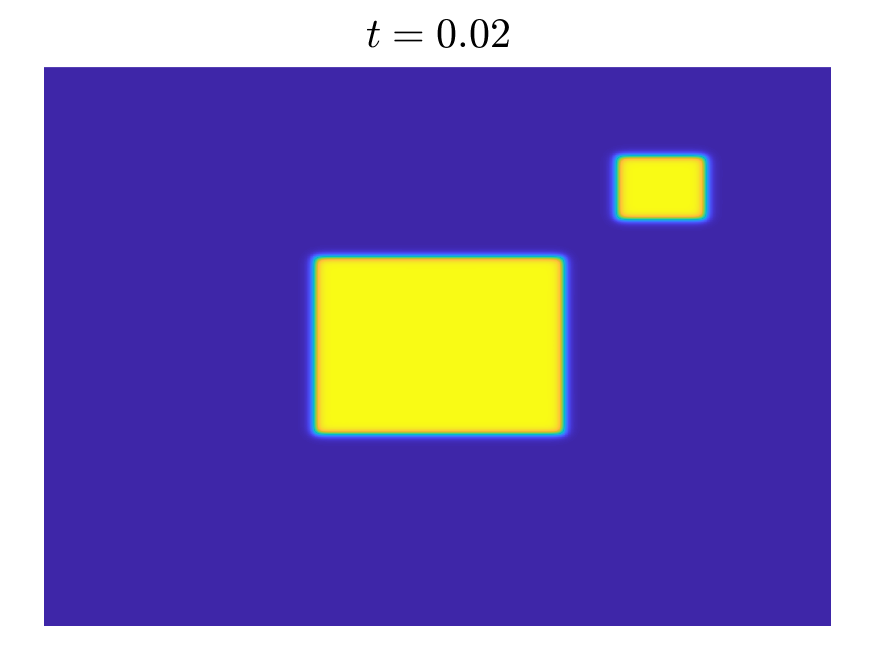}
    \includegraphics[width=3.6cm, height=3.6cm]{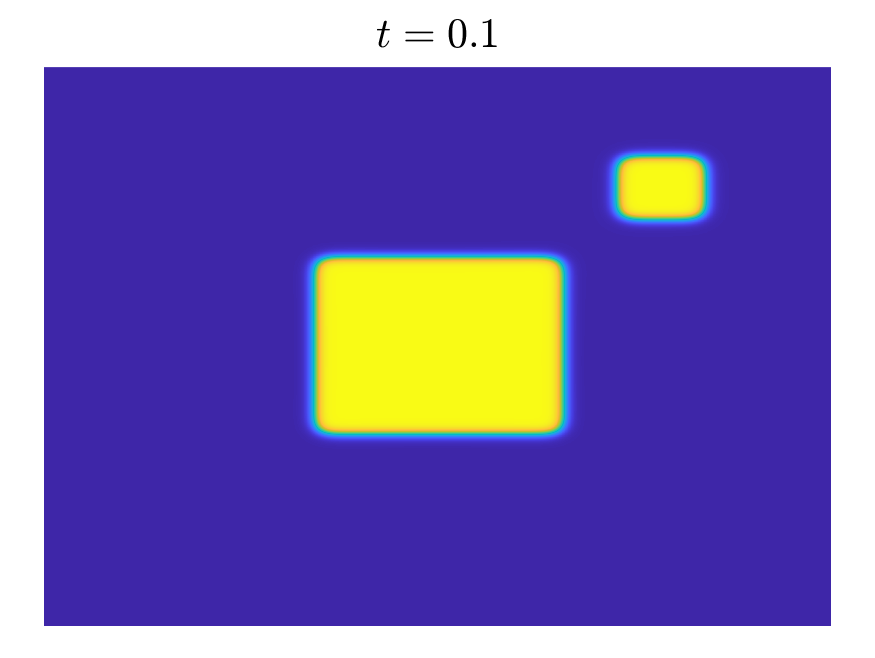}
    \includegraphics[width=3.6cm, height=3.6cm]{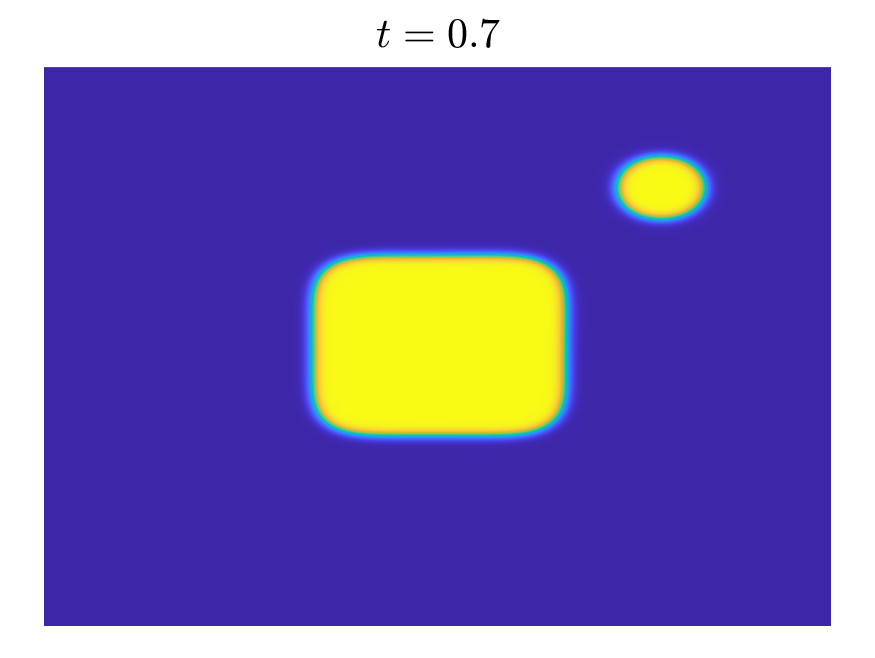}
    \caption{(Example \ref{Ex 2}) Contour plots of the numerical solution $\phi^n$ at $t = 0.01$, $0.02$, $0.1$ and $0.7$ with $\tau = 0.01$. Top: SAV-BE; Down: iSAV-BE.} \label{fig ex2 snapshots} \vspace{3pt}
\end{figure}

To further demonstrate the improvement of iSAV, we test the energy decaying rate (\ref{rate ex}) under the schemes. For the BE schemes, we aim to visualize the difference between (\ref{Energy Law of SAV-BE}) and (\ref{Original Energy Law of iSAV-BE}), especially when $\eps$ decreases.  
 Therefore, we plot the evolution of the quantity $\mathcal{E}[\phi^{n}]-\mathcal{E}[\phi^{n-1}]+\tau\| \mathcal{G}^{\frac{1}{2}}\mu^{n} \|^2$  in Figure \ref{fig ex2 E-E BE} for three different $\eps$. We can see that the result from  iSAV remains monotone and non-positive, while the result from SAV becomes positive and contains oscillations that become stronger as $\eps$ reduces.  For the BDF schemes, we test (\ref{Energy Law of iSAV-BDF}) and the results of 
 $\mathcal{E}_{2}[\phi^{n},\phi^{n-1}]-\mathcal{E}_{2}[\phi^{n-1},\phi^{n-2}]+\tau\| \mathcal{G}^{\frac{1}{2}}\mu^{n} \|^2$ are given in Figure \ref{fig ex2 E-E BDF}, where the proposed iSAV-BDF always produce non-positive results. 
  
In addition, we address Remarks \ref{Remark r-r BE} and \ref{Remark r-r BDF}  by computing the error $r[\phi^{n}]-r^{n}$ in SAV  and the error $r[\phi^{n}]-\tilde{r}^{n}$ in iSAV. 
With $\eps=0.04$ and $\tau=0.001$ fixed, the results are displayed in Figure \ref{fig ex2 rn diff}. It can be observed that in the iSAV schemes, the difference between the introduced auxiliary variable and the original variable is indeed much smaller than  that of SAV schemes.  At last, we show the contour plots of the numerical solutions from SAV-BE and iSAV-BE  in Figure \ref{fig ex2 snapshots}.  Here the SAV-BE  without stabilization terms,  as reported in \cite{Shen2019efficient,Shen2019NewClass}, exhibits spurious oscillations at the early  stage of the solution, while iSAV-BE does not. 

Last but not least, we use this example to address Remark \ref{rk tau0} on the technical condition about $\tau$.  We solve the problem under three different $\eps$ with  $S=3/\eps^2$, and we fix $\tau=0.1$ for iSAV-BE. The results given in Figure \ref{fig on tau0} show that iSAV-BE can have the original energy-stability for large $\tau$ not restricted by $S$ or $\eps$. 

Based on the above findings and comparisons, we can conclude that the proposed iSAV schemes are stable, accurate and can indeed gain practical improvements.

\begin{figure}[!ht]
    \centering
    \includegraphics[width=7.2cm, height=5.625cm]{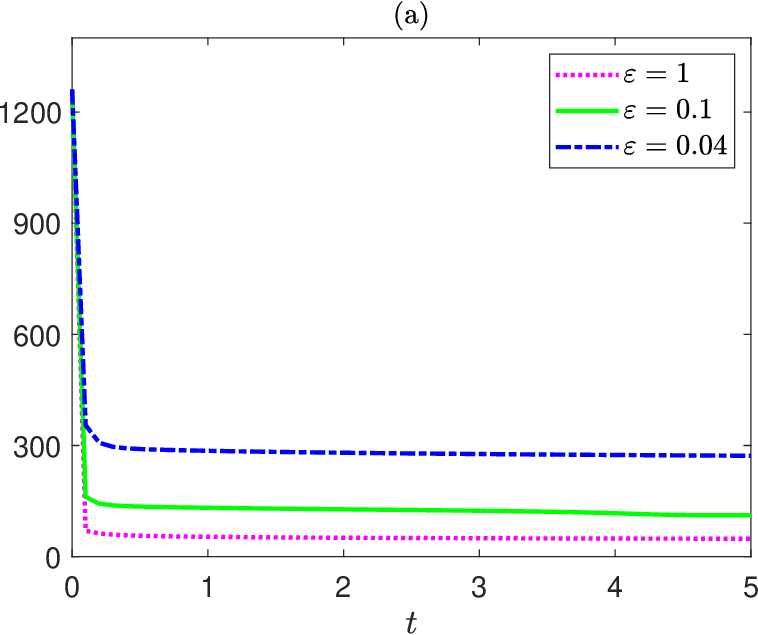}
    \includegraphics[width=7.2cm, height=5.625cm]{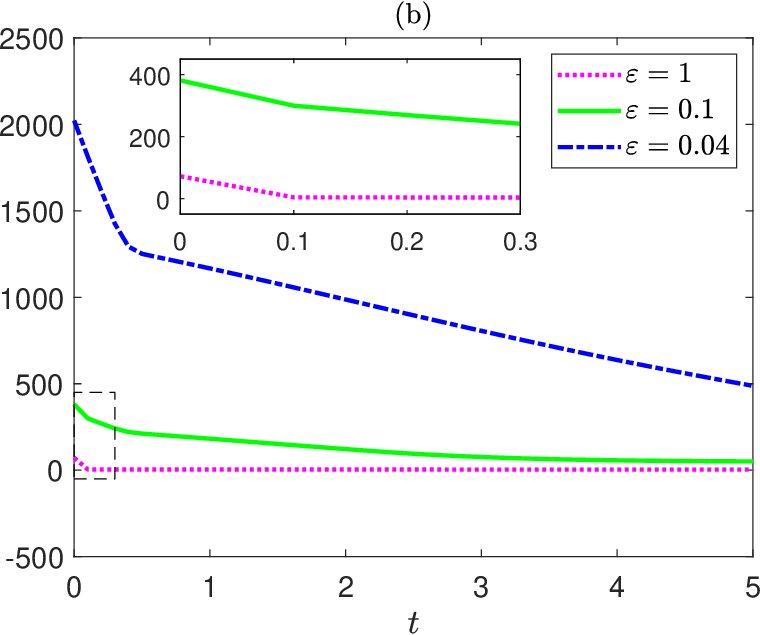}
    \caption{(Example \ref{Ex 2} and \ref{Ex 3}): Time evolution of original energy of iSAV-BE under a fixed $\tau=0.1$ for different $\varepsilon$. (a) Example \ref{Ex 2}; (b) Example \ref{Ex 3}.} \label{fig on tau0} \vspace{3pt}
\end{figure}

\subsection{Flory-Huggins (F-H) potential}
Next, we consider the Flory-Huggins type potential \cite{Binder1983collective,Fialkowski2008dynamics}, i.e.,
\begin{align*}
    F(\phi) = \frac{1}{\varepsilon^2}\left[ \phi\,{\rm ln}\phi +\left( 1-\phi \right)\,{\rm ln}(1-\phi) + \beta\left( \phi - \phi^{2} \right) \right],
\end{align*}
which is a popular class of logarithmic potentials in applications for (\ref{gradient flow}). 
In the F-H potential, the domain of definition  is an open interval $(0, 1)$, and so the numerical solution needs to be strictly confined within this domain to avoid calculation overflow, which is more challenging. A helpful regularization strategy   to the problem can be introduced by extending the domain $(0, 1)$ to $(-\infty, \infty)$ and replacing the logarithmic function for (\ref{gradient flow}) by a $C^2$-continuous, convex, and piecewisely defined  function \cite{Copetti1992,Zhang2020non}: for some  $0<\sigma\leq\frac{1}{2}$, 
\begin{align*}
    F(\phi) = \frac{1}{\varepsilon^2}
    \left\{
    \begin{array}{ll}
        \phi\,{\rm ln}\phi + \frac{\left( 1-\phi \right)^2}{2\sigma} + (1-\phi)\,{\rm ln}\sigma - \frac{\sigma}{2} + \beta\left( \phi - \phi^2 \right), & {\rm if}\;\; \phi\geq 1-\sigma, \\
        \phi\,{\rm ln}\phi +\left( 1-\phi \right)\,{\rm ln}(1-\phi) + \beta\left( \phi - \phi^{2} \right), & {\rm if}\;\; \sigma\leq\phi\leq 1-\sigma, \\
        (1-\phi)\,{\rm ln}(1-\phi) + \frac{\phi^2}{2\sigma} + \phi\,{\rm ln}\sigma - \frac{\sigma}{2} + \beta\left( \phi - \phi^2 \right), & {\rm if}\;\; \phi\leq \sigma.
    \end{array}
    \right.
\end{align*}
We shall show by this subsection that the proposed iSAV schemes (present only results of BE for simplicity) work well for (\ref{gradient flow})  with the logarithmic potential under such  strategy. 

\begin{example} \label{Ex 3}
    We consider the $H^{-\alpha}$ flow ($\alpha=0\;{\rm or}\;1$) with  parameters 
    $\varepsilon = 0.04, \,\gamma = 0.5, \,  \beta = 3, \,  \sigma = 0.01, \,S = 10/\varepsilon^2$ and the computational domain $\Omega=[0,2\pi]\times[0,2\pi]$ 
    with a $256\times 256$ spatial discrete grid. The initial value of (\ref{gradient flow}) is given as 
    \begin{align*}
        \phi(x,y,0) = \left\{
        \begin{aligned}
            &0.7,\qquad (x,y)\in\Omega_{0}, \\
            &0.3,\qquad  (x,y)\in\Omega/\Omega_{0}, 
        \end{aligned}
        \right.
    \end{align*}
    where $ \Omega_{0} = \big\{ (x,y)\in\Omega\,:\, (|x-x_{1}|^2\leq 1.4^2\cap |y-y_{1}|\leq 1.4^2) \cup (|x-x_{2}|^2\leq 0.5^2\cap |y-y_{2}|\leq 0.5^2) \big\} $ denotes two disk regions centered at $(x_{1},y_{1})=(\pi-0.8,\pi)$ and $(x_{2},y_{2})=(\pi+1.7,\pi)$. To maintain a positive lower bound for $r[\phi]$, an additional constant $ 0.06/\varepsilon^2 $ has been added in $F(\phi)$.
\end{example}

\begin{figure}[!ht]
    \centering
    \includegraphics[width=4.5cm, height=3.6cm]{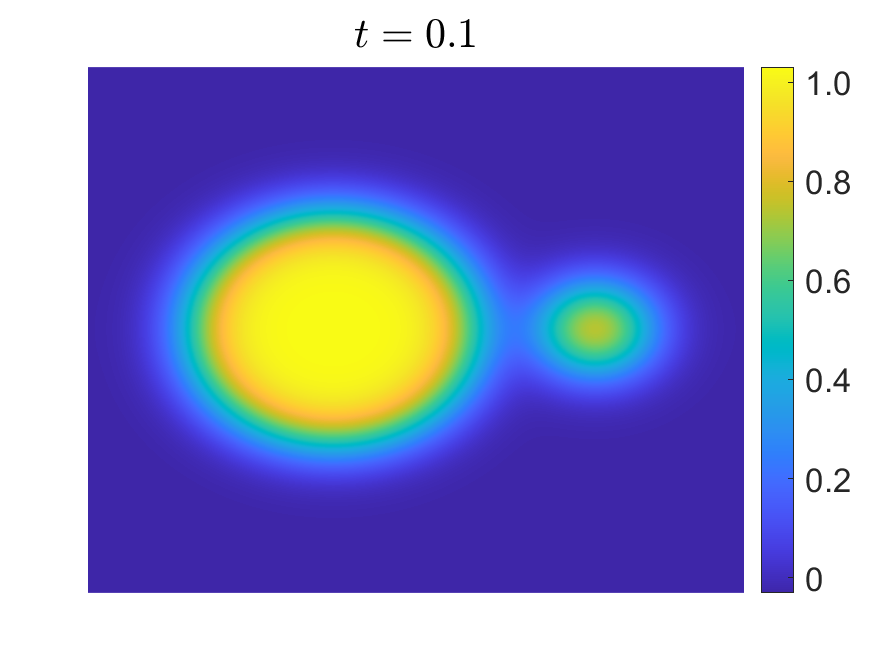}
    \includegraphics[width=4.5cm, height=3.6cm]{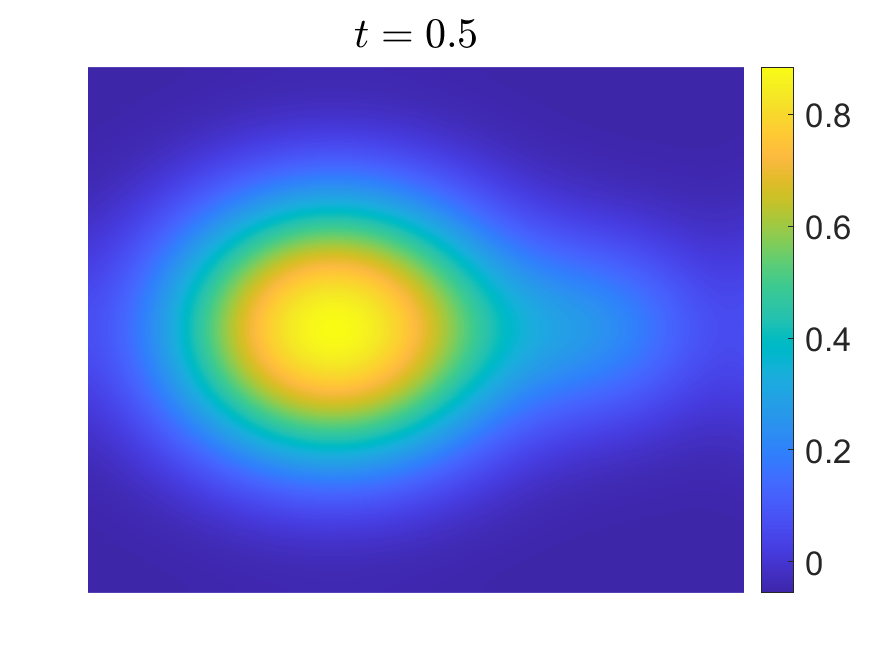}
    \includegraphics[width=4.5cm, height=3.6cm]{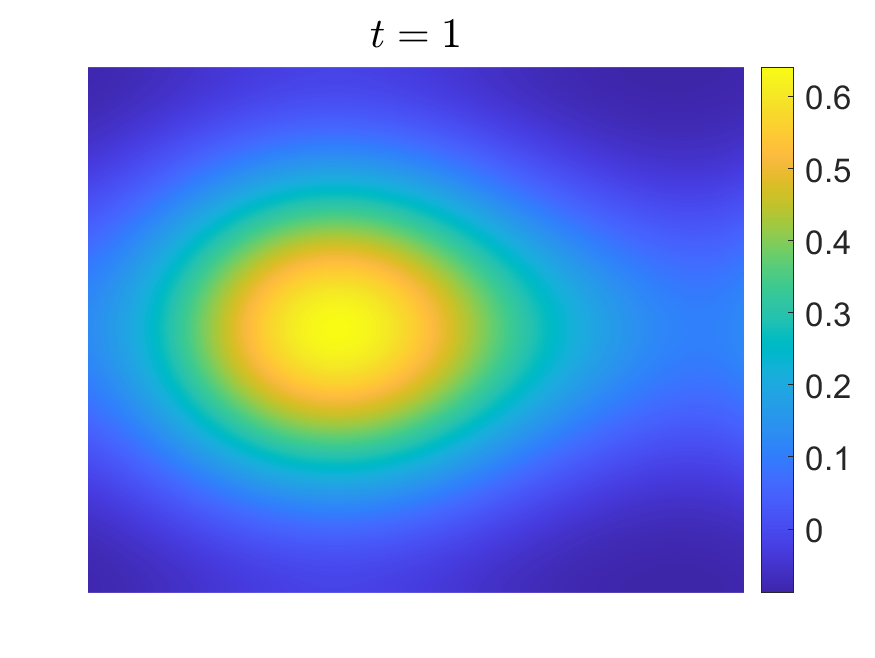} \\
    \includegraphics[width=4.5cm, height=3.6cm]{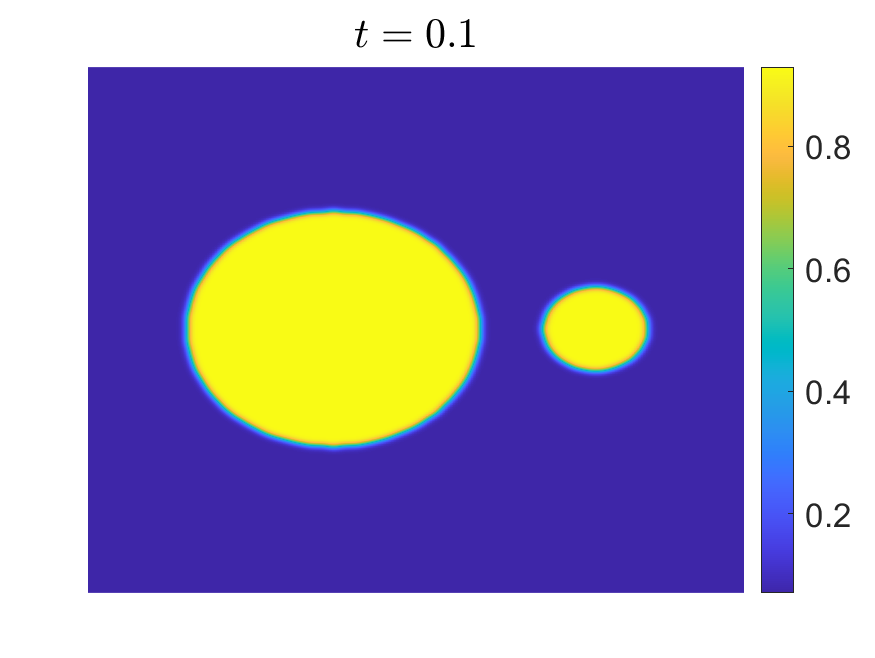}
    \includegraphics[width=4.5cm, height=3.6cm]{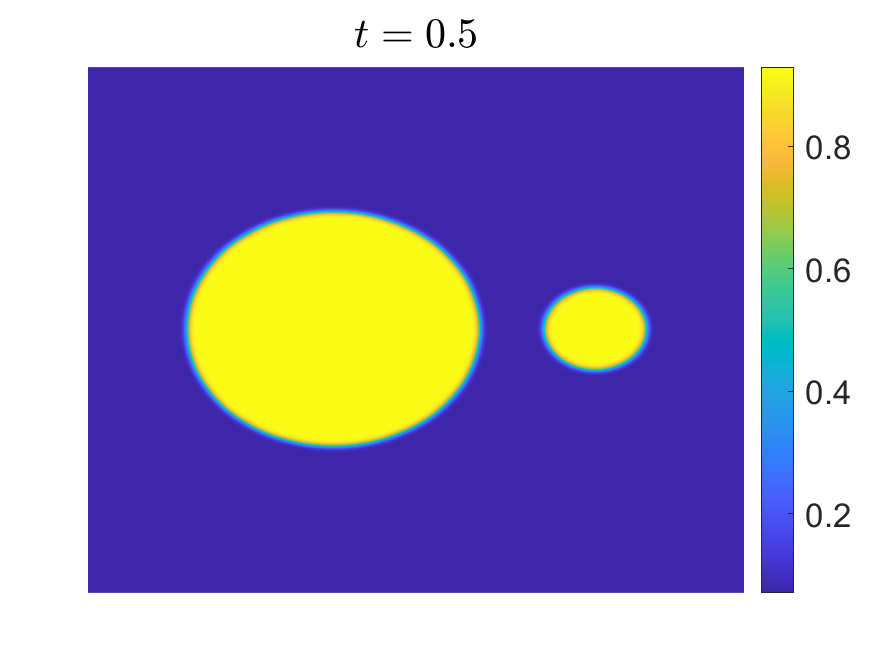}
    \includegraphics[width=4.5cm, height=3.6cm]{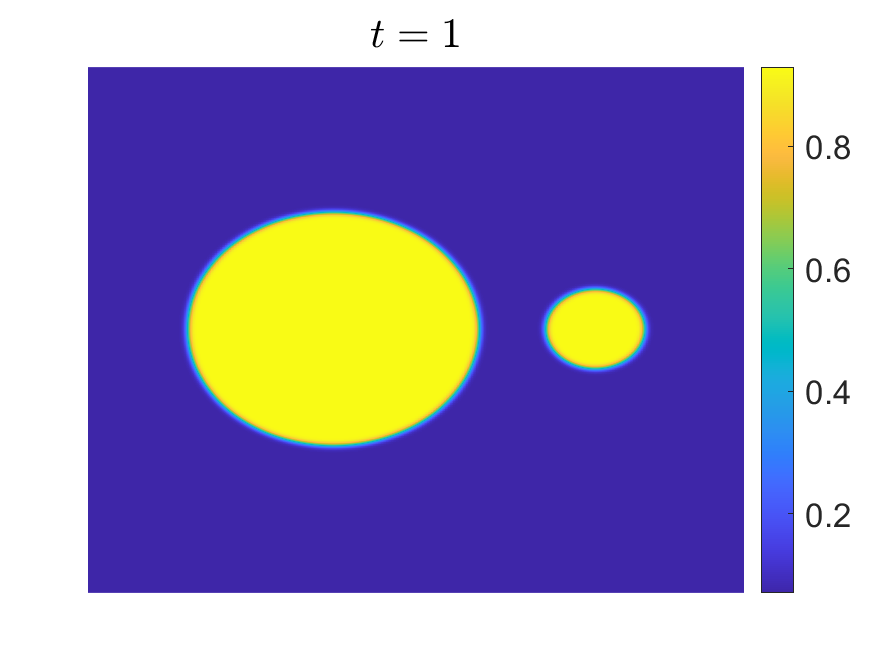}
    \caption{(Example \ref{Ex 3}, $\alpha=0$) Contour plots of the numerical solutions $\phi^n$ at $t = 0.1$, $0.5$ and $1.0$ with $\tau = 0.01$. Top: SAV-BE; Down: iSAV-BE.} \label{fig ex3 H0 snapshots} \vspace{3pt}
\end{figure}

\begin{figure}[!ht]
    \centering
    \includegraphics[width=4.5cm, height=3.6cm]{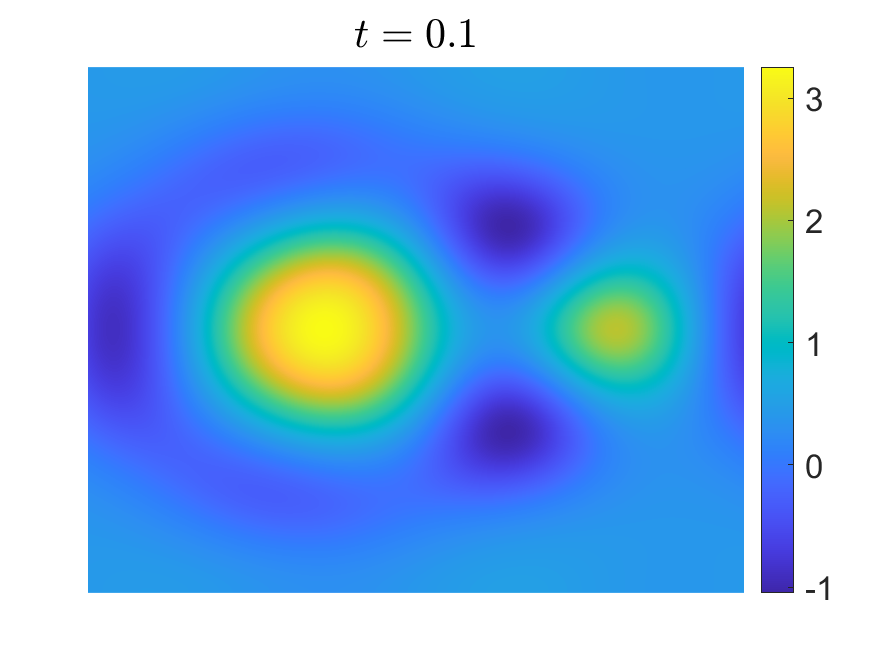}
    \includegraphics[width=4.5cm, height=3.6cm]{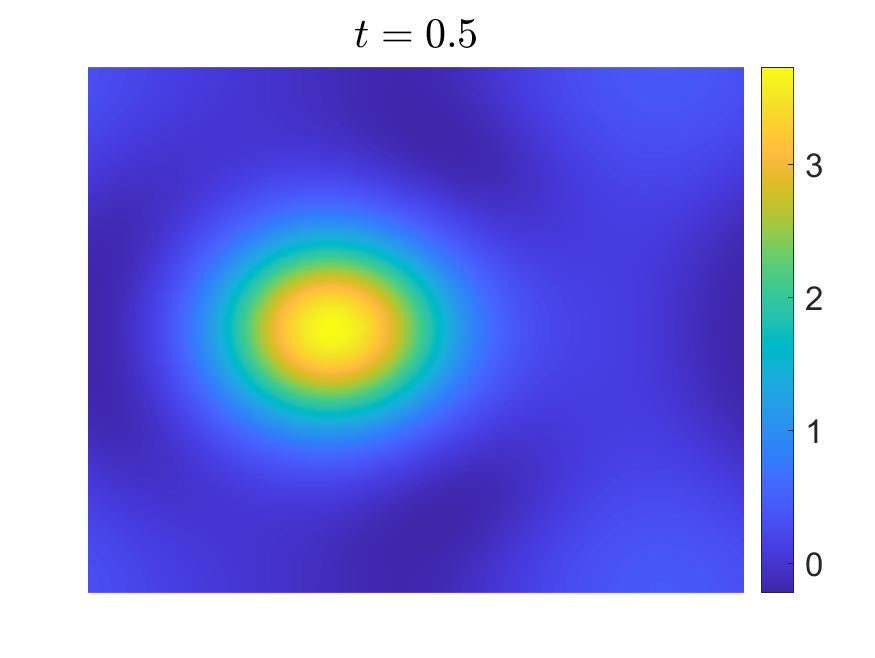}
    \includegraphics[width=4.5cm, height=3.6cm]{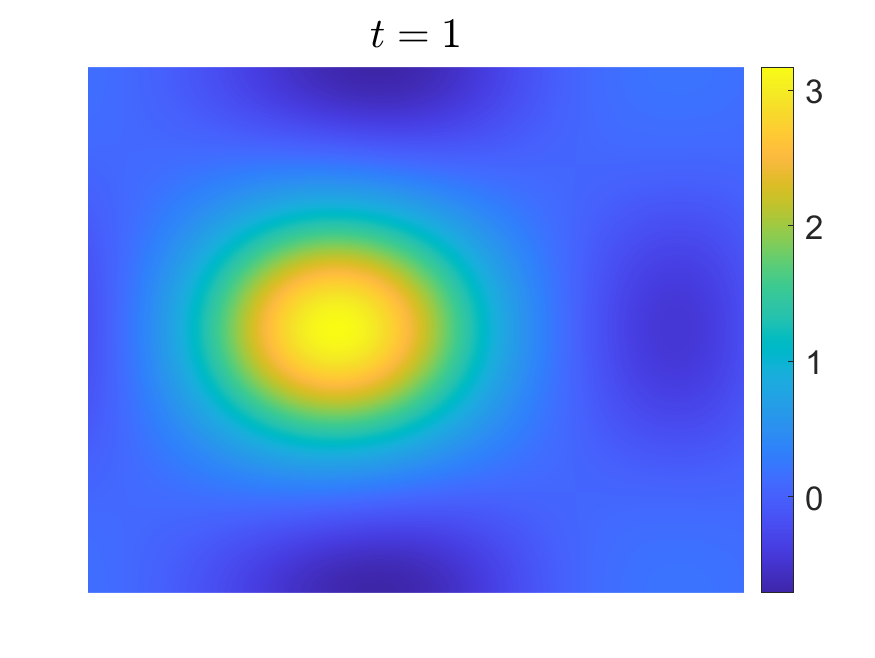} \\
    \includegraphics[width=4.5cm, height=3.6cm]{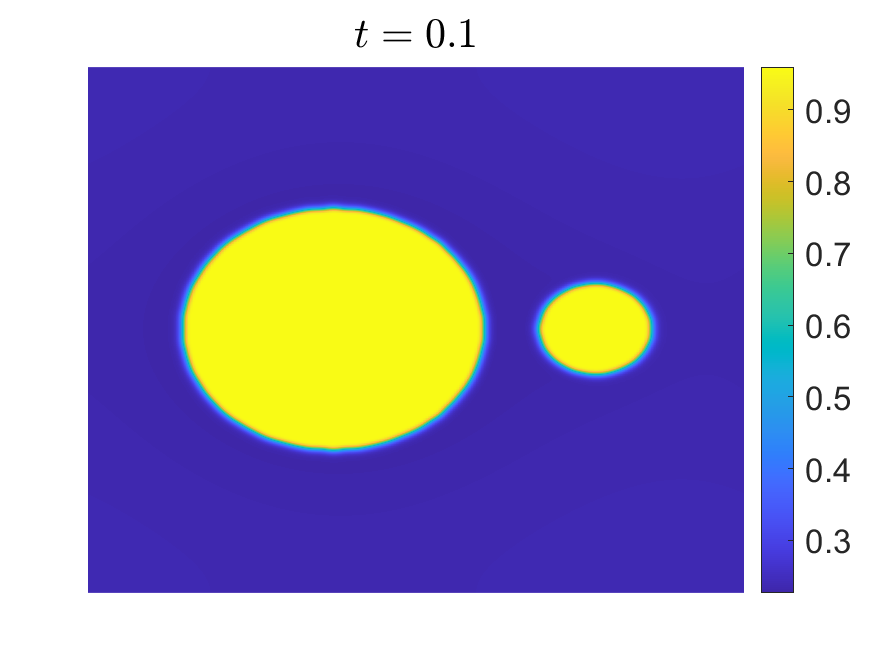}
    \includegraphics[width=4.5cm, height=3.6cm]{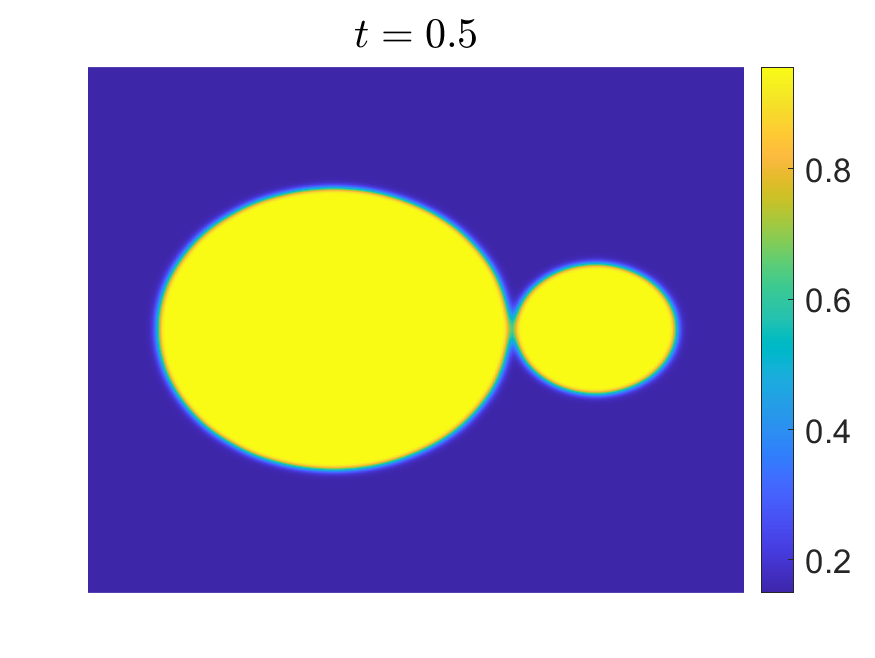}
    \includegraphics[width=4.5cm, height=3.6cm]{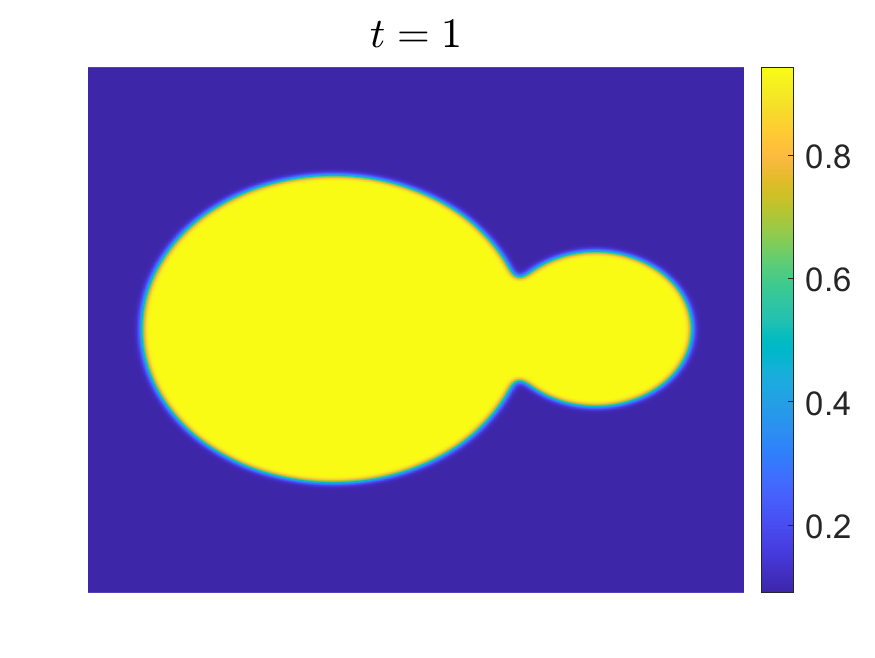}
    \caption{(Example \ref{Ex 3}, $\alpha=1$) Contour plots of  numerical solutions $\phi^n$ at $t = 0.1$,  $0.5$ and $1.0$ with $\tau = 0.01$. Top: SAV-BE; Down: iSAV-BE.} \label{fig ex3 H1 snapshots} \vspace{3pt}
\end{figure}

\begin{figure}[!ht]
    \centering
    \includegraphics[width=7.2cm, height=5.625cm]{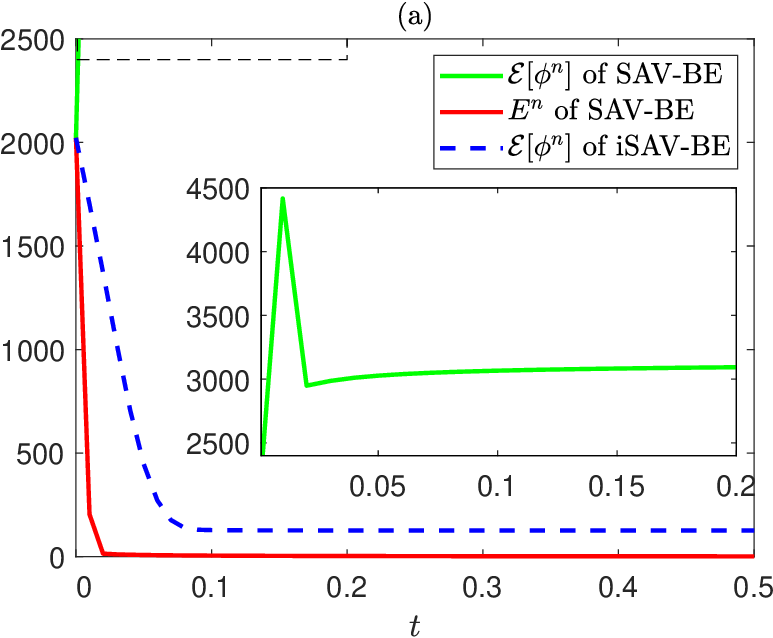}
    \includegraphics[width=7.2cm, height=5.625cm]{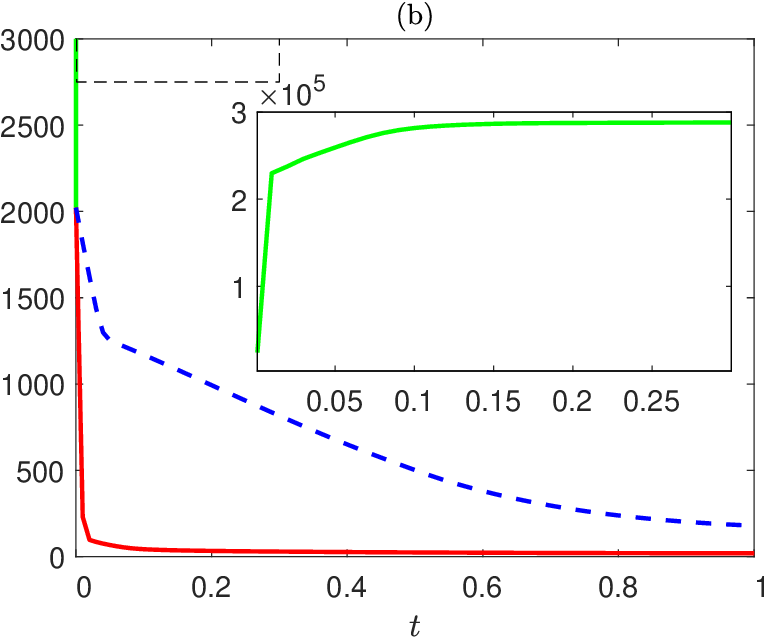}
    \caption{(Example \ref{Ex 3}): Time evolution of the energy of SAV-BE and iSAV-BE with $\tau=0.01$. (a) $\alpha=0$; (b) $\alpha=1$.} \label{fig ex3 energy} \vspace{3pt}
\end{figure}

With $\tau=0.01$ fixed, Figures \ref{fig ex3 H0 snapshots} and \ref{fig ex3 H1 snapshots} display the numerical solutions obtained by  SAV-BE and iSAV-BE  at different time for Allen-Cahn model and Cahn-Hilliard model, respectively. Figure \ref{fig ex3 energy} shows the energy evolution of each scheme. From the numerical results, we can observe that the SAV schemes (without stabilization) exhibit the similar  instability issue as before and the problem seems  severer in the case of F-H potential. Numerical solutions of SAV-BE can exceed the interval $(0,1)$ (1st line of Figure \ref{fig ex3 H1 snapshots}), resulting in false states with significant increase in the original energy (Figure \ref{fig ex3 energy}). These phenomena do not occur in iSAV. The performance of iSAV-BE  with large $\tau$ free from $\eps$ under this example is included in Figure \ref{fig on tau0}.

\begin{example} \label{Ex 4}
  With  all other parameters same as in Example \ref{Ex 3}, we now consider a random initial data:
    \begin{align*}
        \phi(x,y,0) = 0.5 + 0.2\,{\rm Rand}(x,y),
    \end{align*}
    where ${\rm Rand}(x,y)$ stands for randomly generated 
 values in $[-1,1]$ under the uniform distribution.
\end{example}

\begin{figure}[!ht]
    \centering
    \includegraphics[width=3.6cm, height=3.6cm]{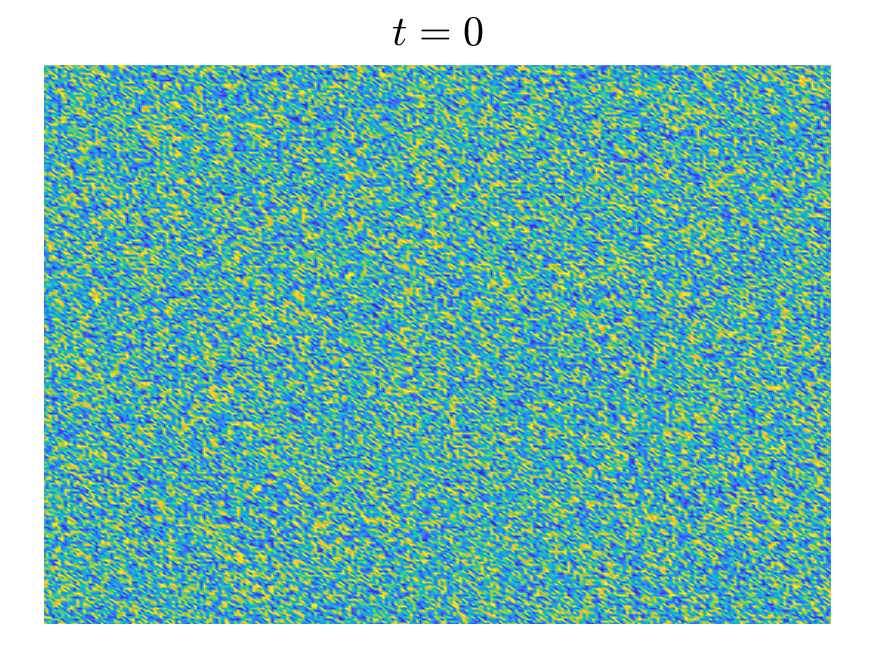}
    \includegraphics[width=3.6cm, height=3.6cm]{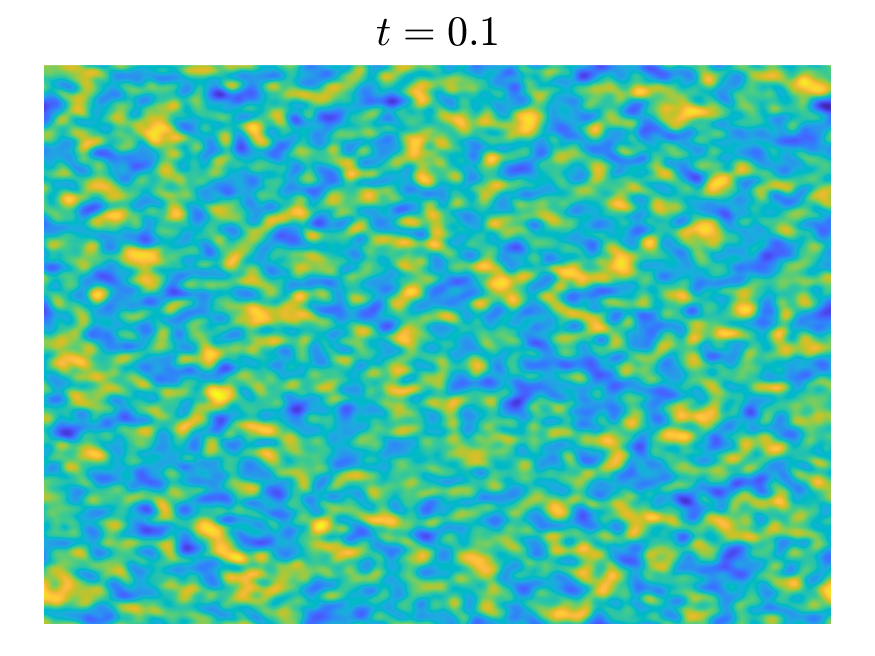}
    \includegraphics[width=3.6cm, height=3.6cm]{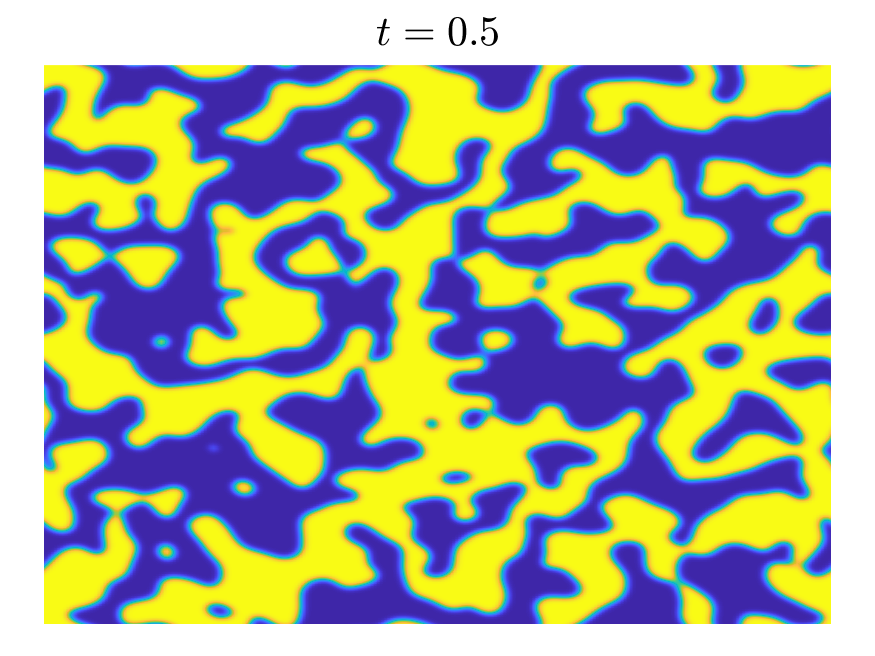}
    \includegraphics[width=3.6cm, height=3.6cm]{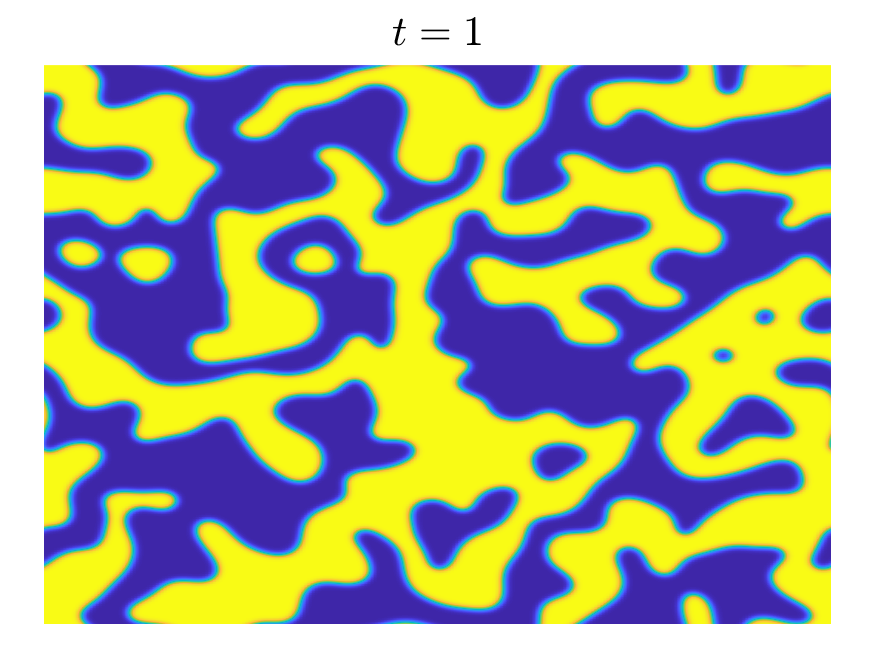}
    \caption{(Example \ref{Ex 4}, $\alpha=0$) Contour plots of the numerical solutions $\phi^n$ from iSAV-BE at $t = 0$, $0.1$, $0.5$ and $1$ with $\tau = 0.01$.} \label{fig ex4 snapshots} \vspace{3pt}
\end{figure}

Figure \ref{fig ex4 snapshots} displays the states of the $H^{0}$ flow obtained by the iSAV-BE scheme  at different time. We can observe that although the initial value for (\ref{gradient flow}) is extremely rough, the iSAV scheme  still works well, where the  coarsening process can be  captured. This indicates the smoothness  assumption in the theorem is technical.

\section{Conclusion}\label{sec:conclusion}
This paper proposed some linear numerical schemes for solving gradient flows under the framework of the scalar auxiliary variable (SAV) approach, where we aimed to improve the stability of the original energy of the problem. In particular, a first-order improved SAV (iSAV) scheme which involves no nonlinear equations, was constructed  and was rigorously proved to offer the original energy-dissipation. 
Convergence analysis with the optimal error bound has  been established for it. Numerical results validated the theoretical accuracy and original energy stability of the proposed iSAV scheme, and extensive comparisons with SAV were made to illustrate the gain. The possible second-order extension was also suggested and supported by numerical experiments.

\appendix
\section{Derivation of (\ref{Energy Law of SAV-BE}).}\label{Appendix 0}
Assume that $\phi^{n}\;(n=0,1,\cdots)$, $\phi_{t}$, $f$ and $f'$ are all bounded.
By direct calculations and (\ref{modified energy of SAV-BE}), we have
\begin{align}
    \frac{1}{\tau}\left( \mathcal{E}[\phi^{n+1}] -  \mathcal{E}[\phi^{n}] \right) =& \frac{1}{\tau}\left( E^{n+1}-E^{n} \right) + \frac{1}{\tau}\left( \mathcal{E}[\phi^{n+1}]-E^{n+1} \right) - \frac{1}{\tau}\left( \mathcal{E}[\phi^{n}] - E^{n} \right) \nonumber\\
    \leq& -\left\| \mathcal{G}^{\frac{1}{2}}\mu^{n+1} \right\|^2 + \frac{1}{\tau}\left[ \left( r[\phi^{n+1}] \right)^2 - \left( r^{n+1} \right)^2 - \left( r[\phi^{n}] \right)^2 + \left( r^{n} \right)^2 \right].\label{App 1}
\end{align}
Then by (\ref{SAV-BE scheme 3}) and (\ref{20240111-01}), we can derive 
\begin{align*}
    &\left( r[\phi^{n+1}] \right)^2 - \left( r^{n+1} \right)^2 - \left( r[\phi^{n}] \right)^2 + \left( r^{n} \right)^2 = \frac{r[\phi^{n}] - r^{n}}{\sqrt{\mathcal{F}[\phi^{n}]}}\int_{\Omega}{ f(\phi^{n})\left( \phi^{n+1} - \phi^{n} \right) }\,{\rm d}{\bf x} \\
    &\hspace{1.5cm} + \frac{1}{2}\int_{\Omega}{ f'\left( (1-\rho_{1})\phi^{n} + \rho_{1}\phi^{n+1} \right)(\phi^{n+1} - \phi^{n})^2 }\,{\rm d}{\bf x} - \frac{1}{4\mathcal{F}[\phi^{n}]}\left[ \int_{\Omega}{ f(\phi^{n})\left( \phi^{n+1} - \phi^{n} \right) }\,{\rm d}{\bf x} \right]^2 = \mathcal{O}(\tau^2),
\end{align*}
where the last equality made use of the convergence result  of  (\ref{SAV-BE scheme}) from  \cite{Shen2018ConvergenceSAV}. Plugging  the above into (\ref{App 1}), we can obtain (\ref{Energy Law of SAV-BE}).

\section{Derivation of (\ref{Energy Law of iSAV-BDF}).}\label{Appendix 1}
Firstly, by taking the $L^2$ inner product of (\ref{iSAV-BDF scheme 1}) with $\mu^{n+1}$, we have
\begin{align}
    \frac{1}{2\tau}\left\langle 3\phi^{n+1}-4\phi^{n}+\phi^{n-1},\mu^{n+1} \right\rangle = -\left\| \mathcal{G}^\frac{1}{2}\mu^{n+1} \right\|^2. \label{BDF inner product 1}
\end{align}
Secondly, by taking the $L^2$ inner product of (\ref{iSAV-BDF scheme 2}) with $(3\phi^{n+1}-4\phi^{n}+\phi^{n-1})/2\tau$, and using the identities
\begin{align*}
    &2\langle \mathcal{L}u_{1},3u_{1}-4u_{2}+u_{3} \rangle = \left\|\mathcal{L}^{\frac{1}{2}}u_{1}\right\|^{2} - \left\|\mathcal{L}^{\frac{1}{2}}u_{2}\right\|^{2}  + \left\|\mathcal{L}^{\frac{1}{2}}(2u_{1}-u_{2})\right\|^{2} -\left\|\mathcal{L}^{\frac{1}{2}}(2u_{2}-u_{3})\right\|^{2} \nonumber\\
    &\hspace{4cm} + \left\|\mathcal{L}^{\frac{1}{2}}(u_{1}-2u_{2}+u_{3})\right\|^{2}, \\
    &\langle u_{1}-2u_{2}+u_{3},3u_{1}-4u_{2}+u_{3} \rangle = \left\|u_{1}-u_{2}\right\|^2 - \left\|u_{2}-u_{3}\right\|^2 + 2\left\|u_{1}-2u_{2}+u_{3}\right\|^2, 
\end{align*}
for some general functions $u_1,u_2,u_3$, we can obtain 
\begin{align}
    &\frac{1}{2\tau}\left\langle \mu^{n+1}, 3\phi^{n+1}-4\phi^{n}+\phi^{n-1} \right\rangle \nonumber\\
    =& \frac{1}{4\tau}\left( \left\| \mathcal{L}^\frac{1}{2}\phi^{n+1} \right\|^2 - \left\| \mathcal{L}^\frac{1}{2}\phi^{n} \right\|^2 + \left\| \mathcal{L}^\frac{1}{2}(2\phi^{n+1}-\phi^{n}) \right\|^2 - \left\| \mathcal{L}^\frac{1}{2}(2\phi^{n}-\phi^{n-1}) \right\|^2 \right. \nonumber\\
    & \left.  + \left\| \mathcal{L}^\frac{1}{2}(\phi^{n+1}-2\phi^{n}+\phi^{n-1}) \right\|^2 \right) + \frac{\tilde{r}^{n+1}}{2\tau}\left\langle \frac{ f(\phi_{*}^{n+1}) }{ \sqrt{\mathcal{F}[\phi_{*}^{n+1}]} },3\phi^{n+1}-4\phi^{n}+\phi^{n-1} \right\rangle \nonumber\\
    & + \frac{S}{2\tau}\left( \left\| \phi^{n+1} - \phi^{n} \right\|^2 - \left\| \phi^{n} - \phi^{n-1} \right\|^2 + 2\left\| \phi^{n+1}-2\phi^{n}+\phi^{n-1} \right\|^2 \right). \label{BDF inner product 2}
\end{align}
Thirdly, multiplying (\ref{iSAV-BDF scheme 3}) by $\tilde{r}^{n+1}/\tau$ and again by the identity, we can obtain that
\begin{align}
    &\frac{1}{2\tau}\left[ \left( \tilde{r}^{n+1} \right)^2 - \left( r[\phi^{n}] \right)^2 + \left( 2\tilde{r}^{n+1}-r[\phi^{n}] \right)^2 - \left( 2r[\phi^{n}]-r[\phi^{n-1}] \right)^2 + \left( \tilde{r}^{n+1}-2r[\phi^{n}]+r[\phi^{n-1}] \right)^2 \right] \nonumber\\
    =& \frac{\tilde{r}^{n+1}}{2\tau}\left\langle \frac{ f(\phi_{*}^{n+1}) }{ \sqrt{\mathcal{F}[\phi_{*}^{n+1}]} },3\phi^{n+1}-4\phi^{n}+\phi^{n-1} \right\rangle.\label{BDF inner product 3}
\end{align}

Combining (\ref{BDF inner product 1}), (\ref{BDF inner product 2}) and (\ref{BDF inner product 3}), we have
\begin{align}
    \frac{1}{\tau}\left( \tilde{E}_{2}^{n+1} - \mathcal{E}_{2}[\phi^{n},\phi^{n-1}] \right) = -\left\| \mathcal{G}^\frac{1}{2}\mu^{n+1} \right\|^2 - \mathcal{N}^{n} -\frac{S}{\tau}\left\| \phi^{n+1}-2\phi^{n}+\phi^{n-1} \right\|^2,  \label{Temp 3}
\end{align}
with $\tilde{E}_{2}^{n+1}$ and $\mathcal{N}^{n}$  defined as
\begin{align*}
    &\tilde{E}_{2}^{n+1} = \frac{1}{4}\left( \left\| \mathcal{L}^{\frac{1}{2}}\phi^{n+1} \right\|^2 + \left\| \mathcal{L}^{\frac{1}{2}}(2\phi^{n+1}-\phi^{n}) \right\|^2 \right) + \frac{1}{2}\left[ \left( \tilde{r}^{n+1} \right)^2 + \left( 2\tilde{r}^{n+1}-r[\phi^{n}] \right)^2 \right] + \frac{S}{2}\left\| \phi^{n+1} - \phi^{n} \right\|^2, \\ 
    &\mathcal{N}^{n} = \frac{1}{4\tau}\left\| \mathcal{L}^{\frac{1}{2}}(\phi^{n+1}-2\phi^{n}+\phi^{n-1}) \right\|^2 + \frac{1}{2\tau}\left( \tilde{r}^{n+1}-2r[\phi^{n}]+r[\phi^{n-1}] \right)^2.
\end{align*}
On the other hand, we have 
\begin{align}
    \frac{1}{\tau}\left( \mathcal{E}_{2}[\phi^{n+1},\phi^{n}] - \tilde{E}_{2}^{n+1} \right) =& \frac{1}{2\tau}\left[ \left( r[\phi^{n+1}] \right)^2 - \left( \tilde{r}^{n+1} \right)^2 + \left( 2r[\phi^{n+1}]-r[\phi^{n}] \right)^2 - \left( 2\tilde{r}^{n+1}-r[\phi^{n}] \right)^2 \right] \nonumber\\
    =& \frac{1}{2\tau}\left( r[\phi^{n+1}] - \tilde{r}^{n+1} \right)\left( 5r[\phi^{n+1}] + 5\tilde{r}^{n+1} - 4r[\phi^{n}] \right).\label{Temp 4}
\end{align}
Then, we compute the difference between $r[\phi^{n+1}]$ and $\tilde{r}^{n+1}$. By setting $\phi = \phi_{*}^{n+1}$, $\psi = \phi^{n+1}-2\phi^{n}+\phi^{n-1}$, $\rho=1$ and $k=1$ in (\ref{Taylor expansion}), we have
\begin{align}
    r[\phi^{n+1}] =& r[\phi_{*}^{n+1}+(\phi^{n+1}-2\phi^{n}+\phi^{n-1})] \nonumber\\
    =& r[\phi_{*}^{n+1}] + \frac{1}{2\sqrt{\mathcal{F}[\phi_{*}^{n+1}]}}\int_{\Omega}{ f(\phi_{*}^{n+1})(\phi^{n+1}-2\phi^{n}+\phi^{n-1}) }\,{\rm d}{\bf x} + \mathcal{X}_{1}^{n}, \label{rphin+1 taylor expansion}
\end{align}
where 
\begin{align*}
    \mathcal{X}_{1}^{n} = \frac{1}{4}\big(\mathcal{E}_{1}[\xi_{0}^{n}]\big)^{-\frac{1}{2}}\int_{\Omega}{ f'(\xi_{0}^{n}) (\phi^{n+1}-2\phi^{n}+\phi^{n-1})^2 }\,{\rm d}{\bf x}-\frac{1}{8}\big( \mathcal{E}_{1}[\xi_{0}^{n}] \big)^{-\frac{3}{2}}\Big(\int_{\Omega}{ f(\xi_{0}^{n}) (\phi^{n+1}-2\phi^{n}+\phi^{n-1}) }\,{\rm d}{\bf x}\Big)^2,
\end{align*}
with $\xi_{0}^{n} = \rho_{0}\phi^{n+1} + 2(1-\rho_{0})\phi^{n} + (\rho_{0}-1)\phi^{n-1}$ for some $\rho_{0}\in(0,1)$. And by setting $\phi=\phi_{*}^{n+1}$, $\psi=\phi^{n} - \phi^{n-1}$, $\rho=-1,-2$ and $k=2$ in (\ref{Taylor expansion}), there are
\begin{align}
    r[\phi^{n}] =& r[\phi_{*}^{n+1}-\psi] = r[\phi_{*}^{n+1}] - \frac{1}{2\sqrt{\mathcal{F}[\phi_{*}^{n+1}]}}\int_{\Omega}{ f(\phi_{*}^{n+1})(\phi^{n}-\phi^{n-1}) }\,{\rm d}{\bf x} \nonumber\\
    &\qquad\qquad\qquad+ \frac{1}{2}\frac{{\rm d}^2}{{\rm d}\rho^2}r[\phi_{*}^{n+1}+\rho (\phi^{n}-\phi^{n-1})]\Big|_{\rho=0} - \frac{1}{6}\frac{{\rm d}^3}{{\rm d}\rho^3}r[\phi_{*}^{n+1}+\rho (\phi^{n}-\phi^{n-1})]\Big|_{\rho=\rho_{1}}, \label{rphin taylor expansion}\\
    r[\phi^{n-1}] =& r[\phi_{*}^{n+1}-2\psi] = r[\phi_{*}^{n+1}] - \frac{1}{\sqrt{\mathcal{F}[\phi_{*}^{n+1}]}}\int_{\Omega}{ f(\phi_{*}^{n+1})(\phi^{n}-\phi^{n-1}) }\,{\rm d}{\bf x} \nonumber\\
    &\qquad\qquad\qquad + 2\frac{{\rm d}^2}{{\rm d}\rho^2}r[\phi_{*}^{n+1}+\rho (\phi^{n}-\phi^{n-1})]\Big|_{\rho=0} - \frac{4}{3}\frac{{\rm d}^3}{{\rm d}\rho^3}r[\phi_{*}^{n+1}+\rho (\phi^{n}-\phi^{n-1})]\Big|_{\rho=\rho_{2}}, \label{rphin-1 taylor expansion}
\end{align}
for some $\rho_{1}\in(-1,0)$, $\rho_{2}\in(-2,0)$. Combining (\ref{rphin+1 taylor expansion}), (\ref{rphin taylor expansion}) and (\ref{rphin-1 taylor expansion}), one can obtain
\begin{align}
    &3r[\phi^{n+1}]-4r[\phi^{n}]+r[\phi^{n-1}] = \frac{1}{2\sqrt{\mathcal{F}[\phi_{*}^{n+1}]}}\int_{\Omega}{ f(\phi_{*}^{n+1})(3\phi^{n+1}-4\phi^{n}+\phi^{n-1}) }\,{\rm d}{\bf x} + \mathcal{X}_{1}^{n} + \mathcal{X}_{2}^{n} + \mathcal{X}_{3}^{n}, \label{Temp 1}
\end{align}
where the residuals $\mathcal{X}_{2}^{n}$ and $\mathcal{X}_{3}^{n}$ denote 
\begin{align*}
    \mathcal{X}_{2}^{n}=& \frac{1}{4}\left( \mathcal{F}[\xi_{1}^{n}] \right)^{-\frac{5}{2}}\left( \int_{\Omega}{ f(\xi_{1}^{n})(\phi^{n}-\phi^{n-1}) }\,{\rm d}{\bf x} \right)^3 - \frac{1}{2}\left( \mathcal{F}[\xi_{1}^{n}] \right)^{-\frac{3}{2}}\left( \int_{\Omega}{ f(\xi_{1}^{n})(\phi^{n}-\phi^{n-1}) }\,{\rm d}{\bf x} \right) \nonumber\\
    & \times\left( \int_{\Omega}{ f'(\xi_{1}^{n})(\phi^{n}-\phi^{n-1})^2 }\,{\rm d}{\bf x} \right) + \frac{1}{3}\left( \mathcal{F}[\xi_{1}^{n}] \right)^{-\frac{1}{2}} \int_{\Omega}{ f''(\xi_{1}^{n})(\phi^{n}-\phi^{n-1})^3 }\,{\rm d}{\bf x}, \nonumber\\
    \mathcal{X}_{3}^{n}=& -\frac{1}{2}\left( \mathcal{F}[\xi_{2}^{n}] \right)^{-\frac{5}{2}}\left( \int_{\Omega}{ f(\xi_{2}^{n})(\phi^{n}-\phi^{n-1}) }\,{\rm d}{\bf x} \right)^3 + \left( \mathcal{F}[\xi_{2}^{n}] \right)^{-\frac{3}{2}}\left( \int_{\Omega}{ f(\xi_{2}^{n})(\phi^{n}-\phi^{n-1}) }\,{\rm d}{\bf x} \right) \nonumber\\
    & \times\left( \int_{\Omega}{ f'(\xi_{2}^{n})(\phi^{n}-\phi^{n-1})^2 }\,{\rm d}{\bf x} \right) - \frac{2}{3}\left( \mathcal{F}[\xi_{2}^{n}] \right)^{-\frac{1}{2}} \int_{\Omega}{ f''(\xi_{2}^{n})(\phi^{n}-\phi^{n-1})^3 }\,{\rm d}{\bf x},
\end{align*}
with $ \xi_{1}^{n} = (2+\rho_{1})\phi^{n}  + (1+\rho_{1})\phi^{n-1}, \  \xi_{2}^{n} = (2+\rho_{2})\phi^{n}  + (1+\rho_{2})\phi^{n-1}$. Subtracting (\ref{iSAV-BDF scheme 3}) from (\ref{Temp 1}) and concerning the second-order accuracy of iSAV-BDF scheme, we find formally 
\begin{align}
    r[\phi^{n+1}]-\tilde{r}^{n+1} = \frac{1}{3}\left( \mathcal{X}_{1}^{n} + \mathcal{X}_{2}^{n} + \mathcal{X}_{3}^{n} \right) = \mathcal{O}(\tau^3). \label{Temp 2}
\end{align}
Combining (\ref{Temp 3}), (\ref{Temp 4}) and (\ref{Temp 2}), we can derive that
\begin{align*}
    &\frac{1}{\tau}\left( \mathcal{E}_{2}[\phi^{n+1},\phi^{n}] - \mathcal{E}_{2}[\phi^{n},\phi^{n-1}] \right) \nonumber\\
    =& \frac{1}{\tau}\left( \tilde{E}_{2}^{n+1} - \mathcal{E}_{2}[\phi^{n},\phi^{n-1}] \right) + \frac{1}{\tau}\left( \mathcal{E}_{2}[\phi^{n+1},\phi^{n}] - \tilde{E}_{2}^{n+1} \right) \\
    =& -\left\| \mathcal{G}^\frac{1}{2}\mu^{n+1} \right\|^2 - \mathcal{N}^{n} -\frac{S}{\tau}\left\| \phi^{n+1}-2\phi^{n}+\phi^{n-1} \right\|^2 + \frac{1}{2\tau}\left( r[\phi^{n+1}] - \tilde{r}^{n+1} \right)\left( 5r[\phi^{n+1}] + 5\tilde{r}^{n+1} - 4r[\phi^{n}] \right) \\
    \leq& -\left\| \mathcal{G}^\frac{1}{2}\mu^{n+1} \right\|^2 + \mathcal{O}(\tau^2),
\end{align*}
which gives (\ref{Energy Law of iSAV-BDF}).

\section*{Acknowledgements}
The work is supported by NSFC 12271413, 12001055 and the Natural Science Foundation of Hubei Province 2019CFA007. We thanks Prof. Jie Shen for helpful discussion, suggestion and encouragement.

\bibliographystyle{siam}
\bibliography{ref}

\end{document}